\newcommand{\PP}{\mathbb P}
\newcommand{\lra}{\longrightarrow}
\newcommand\CC{{\mathbb C}}
\newcommand\ZZ{{\mathbb Z}}
\newcommand\QQ{{\mathbb Q}}
\newcommand\mo{{\mathcal O}}
\newcommand\mn{{\mathcal N}}
\newcommand\tx{{\tilde X_{16}}}
\newcommand\ttx{{\tilde X_{5}}}
\newcommand\ty{{\tilde Y}}
\newcommand\ts{{S}}
\newcommand\tts{S_1}
\newcommand\plra{\dashrightarrow}
\newcommand{\WWW}{W} 
\newcommand\moty{{\mathcal O_{\tilde Y}}}
\newcommand\mb{{\mathcal B}}              
\newcommand\mq{{\mathfrak q}}
\newcommand\mc{{\mathcal C}}             
\newcommand\xs{{\mathcal V}_{10}}
\newcommand\exactlytensingularities{A1}
\newcommand\zanurzeniebordygi{A2}
\newcommand\exactlyfortysixsingularities{A3}
\newcommand\isolatedsingularities{A4}
\DeclareMathOperator{\sing}{sing}
\DeclareMathOperator{\rank}{rank}
\begin{document}

\title{On Calabi--Yau threefolds associated to a web of quadrics}
 \author{S{\l}awomir Cynk}
 \address{Institute of Mathematics, Jagiellonian University,
{\L}ojasiewicza 6, 30-348 Krak\'ow, Poland 
}
\address{Institute of Mathematics of the
Polish Academy of Sciences, ul. \'Sniadeckich 8, 00-956 Warszawa,
P.O. Box 21, Poland}
 \email{slawomir.cynk@uj.edu.pl}
 \author {S{\l}awomir Rams}
\address
{Institute of Mathematics,
Jagiellonian University,
ul. {\L}ojasiewicza 6,
30-348 Krak\'ow,
Poland}

\email{slawomir.rams@uj.edu.pl}
\thanks{Research partially supported by MNiSW grant no. N N201 388834.}
\subjclass[2000]
{Primary: {14J30};  Secondary {14J32, 32S25}}

 \begin{abstract}
 We study the geometry of the birational map between
  an intersection of a web of quadrics in 
  $\PP_7$
 that contains a plane and the double octic branched along the discriminant of the web.
 \end{abstract}

\maketitle

\newcommand\maca{{\mathfrak a}}  
\newcommand{\AAA}{{\mathfrak A}} 
\newcommand\macB{{\mathfrak b}}  
\newcommand\macC{{\mathfrak c}}  
\newcommand\macm{{\mathfrak m}}  
\newcommand\macl{{\mathfrak l}}  
\newcommand\macn{{\mathfrak n}}  

\newcommand{\XX}{X_{16}}
\newcommand{\Pl}{\Pi}
\newcommand{\reg}{\operatorname{reg}}
\theoremstyle{remark}
\newtheorem{obs}{Observation}[section]
\newtheorem{rem}[obs]{Remark}
\newtheorem{conj}[obs]{Conjecture}
\newtheorem{examp}[obs]{Example}
\theoremstyle{definition}
\newtheorem{defi}[obs]{Definition}
\theoremstyle{plain}
\newtheorem{prop}[obs]{Proposition}
\newtheorem{theo}[obs]{Theorem}
\newtheorem{lemm}[obs]{Lemma}
\newtheorem{cor}[obs]{Corollary}
\newcommand{\ux}{\underline{x}}

\section*{Introduction}
It is a  classical fact
that there is a correspondence between
the  base locus $S$ of a net of quadrics in $\PP_5$ and the double sextic branched along the discriminant 
of the net. The latter is the moduli space of certain rank-$2$ sheaves on the former  (see \cite{m}). Moreover, if the base locus
contains a line $L$, then the two surfaces are birational. More general conditions for the existence of a birational map  were given
by Nikulin and Madonna (see \cite{nm1} and its sequels).

A precise description of the birational map between the surface $S$ and the double sextic can be found
in \cite{cynkrams}.      
In this case, $S$ is the blow-up of the double sextic along rank-$4$ quadrics in the net. The latter results from the fact that  
the map defined by the linear system $|2H - 3L - \sum_1^k L_i|$, where $H$ is the hyperplane section in $\PP_5$
and  $L_i$ are the lines on $S$  that meet $L$ (see \cite[Thm~3.3]{cynkrams}), 
is hyperelliptic.
Moreover, one can show that the birational map  factors
through another K3 surface (a space quartic that contains a twisted cubic)
  and its geometry (e.g. the contracted curves) is governed by the behaviour of the lines $L_i$.  
The birational map between the two surfaces can be also constructed
via an incidence variety (\cite{j}). 
The latter  construction was adopted in \cite{michalek} to the case of   
a generic web $\WWW = \mbox{span}(Q_0, Q_1, Q_2 , Q_3)$  in $\mathcal{O}_{\PP_{7}}(2)$,   such that its base locus $\XX$ contains a 
fixed plane $\Pl$.
More precisely, using  Bertini-type and computer algebra arguments,
Micha{\l}ek  proved that
if we put $S_8$ (resp. $X_8$) to denote the discriminant surface  of the web $\WWW$ (resp. 
the double cover of the web  $\WWW$ branched along the discriminant
surface  $S_8$) and $\WWW$ is generic enough, then the
Calabi-Yau 
varieties $\XX$ and $X_8$ are birational. However, the approach of
\cite{michalek}
gives neither explicit sufficient condition  for birationality of  $\XX$
and $X_8$ nor a method to study the geometry of the map. 

In this paper, 
for the matrices $\mq_0, \ldots, \mq_3$ that give the quadrics $Q_0, \ldots , Q_3 \in \mathcal{O}_{\PP_{7}}(2)$
 such that $Q_0 \cap \ldots \cap Q_3$ contains a plane $\Pl$
we define two auxiliary matrices $\maca$, $\AAA$ 
and use them
to obtain a surface $\mb \subset \PP_4$ and  a three-dimensional
quintic $X_5  \subset \PP_4$
that contains the surface $\mb$.  
Then, under the assumptions

\noindent
{\bf [\exactlytensingularities]:}  $\XX$ has exactly $10$ singularities on $\Pl$ and is smooth away from the plane $\Pl$, \\
{\bf [\zanurzeniebordygi]:}  no $4$  singular points of $\XX$  lie on a line, \\
{\bf  [\exactlyfortysixsingularities]:} the set  $\{ \ux \in \mb \, : \, \rank(\AAA(\ux)) \leq 2  \}$ consists of $46$ points , \\
{\bf [\isolatedsingularities]:}  the discriminant surface  $S_8$ has only isolated singularities, 

\noindent
we show that there is a birational map  $\XX \plra X_8$ that factors as the composition
$$
\XX \,  \stackrel{\sigma^{-1}}{\plra}  \,  \tx  \, \stackrel{\pi}{\lra} \, X_5  \,  \stackrel{\psi^{-1}}{\plra}  \, 
\ttx  \stackrel{\hat{\phi}}{\lra} \, X_8 \, ,
$$
where $\sigma, \psi$ are certain blow-ups, $\pi$ is resolution of the projection from $\Pl$ and
$\hat{\phi}$ is obtained via Stein factorization from restriction of the so-called Bordiga conic bundle
to the blow-up of the quintic $X_5$. In particular, under the above
assumptions $\mb$ is the so-called (smooth) Bordiga sextic.\\
Bordiga sextic and Bordiga conic bundle have been studied already by the Italian school  (see \cite{{todd}},  \cite{alzatirusso} and
the bibliography in the latter), so the above factorization enables us to give a precise description of the geometry of the birational map in question.
In particular, we are able to show that the map has no two-dimensional fibers, describe the contracted curves (Thm~\ref{thm-fibers}), classify the singularities of the discriminant of the web (and prove that all of them admit a small resolution)
and give an upper bound of their number
(see Cor.~\ref{cor-singularities}).

Our considerations yield that the assumptions [A1],$\ldots$,[A4] are fulfilled by a generic web of quadrics such that its base locus contains 
a fixed plane. Careful analysis of our arguments shows that one can assume less in order to obtain a birational map
 $\XX \plra X_8$, but 
once one omits
the above assumptions the geometry of the birational map changes.  For instance,
if [\zanurzeniebordygi] is not satisfied, the surface in $\PP_4$ one obtains as a result of the projection is no longer the Bordiga surface,
without   [\exactlytensingularities] (resp. [\exactlyfortysixsingularities]) the threefold $X_{16}$ (resp. $X_5$) has higher singularities etc. 
Still, the main strategy we use can be applied to study those degenerations - we do not follow this path in order to maintain the paper compact.  

Our motivation is twofold. First, it seems a natural question to ask under  what assumptions  a three-dimensional  Calabi-Yau analogue 
of the well-known result on K3 surfaces holds.  Second, we obtain a very precise description of a map between certain
Calabi-Yau manifolds that (with help of a computer algebra system applied to 
a given example) could be of interest on its own, for instance as a source of examples of small resolutions.

The paper is organized as follows. In Sect.~\ref{sect-singularities} we study the singularities of the threefold $\XX$
and Hodge numbers of its blow-up $\tx$. Sect.~\ref{projectionfromtheplane} is devoted to properties of projection from the plane $\Pl$.
In the next section we describe the behaviour of the restriction of Bordiga conic bundle to the blow-up of the quintic $X_5$
we defined in Sect.~\ref{projectionfromtheplane}. Finally, the last part (Sect.~\ref{discriminant}) contains
a classification of singularities of the discriminant of the web and proof of main results of the paper.

\noindent
{\em Convention:} In this note we work over the base field $\CC$. By an abuse of notation we use the same symbol to denote a homogeneous polynomial and its zero--set in projective space. 

\section{Singularities of the intersection of four quadrics and a small resolution} \label{sect-singularities}

Let $Q_0, Q_1, Q_2 , Q_3   \subset \PP_7$ be linearly independent  quadrics that contain a (fixed) plane $\Pl$ and let  
$$
\XX := Q_0 \cap Q_1 \cap Q_2 \cap Q_3
$$
be their (scheme-theoretic) intersection.

Without loss of generality we can  assume that 
$
\Pl := \{ (x_0 : \ldots : x_7) \, : \, x_0 = \ldots = x_4 = 0 \}, $
which  implies that  
 each $Q_{i}$ is given by the matrix
\[\mq_i=
\left[
\begin{array}[c]{ccccc|ccc}
   &   &                           &     &  &             &           &             \\
   &   &                           &     &  &             &           &             \\
   &   &\underline{\mq}_{i}&       &     &  &   \macB_i^{T}   &                       \\
   &   &                           &     &  &             &           &             \\
   &   &                           &     &  &             &           &             \\
\hline \rule{0mm}{5mm}    
   &    &                          &     &  &    0        &     0     &      0      \\
   &    &       \macB_i                &     &  &    0        &     0     &      0      \\
   &    &                          &     &  &    0        &     0     &      0      \\
\end{array}
\right]                    \, , 
\]
 where  $\underline{\mq_i}$ is a $5\times5$ matrix,
  $\macB_{i}:=\left[\begin{array}{c}\macl_{i}\\\macm_{i}\\\macn_{i}\end{array}\right]$ and 
$\macl_i,\, \macm_i, \, \macn_i \in \CC^5 $ are row--vectors. Moreover, in order 
to simplify our notation we put 
$\macB(y):=\sum_{i} y_{i}\macB_{i}$ 
 and 
\[\macC(x_{5},x_{6},x_{7}):=x_{5}\left[\begin{array}{cccc}\macl_{0}^{T}&
\macl_{1}^{T}&\macl_{2}^{T}&\macl_{3}^{T}\end{array} 
\right]+x_{6}\left[\begin{array}{cccc}\macm_{0}^{T}&
\macm_{1}^{T}&\macm_{2}^{T}&\macm_{3}^{T}\end{array}
\right]+x_{7}\left[\begin{array}{cccc}\macn_{0}^{T}&\macn_{1}^{T}&
\macn_{2}^{T}&\macn_{3}^{T}\end{array}
\right] \, . \]

We have (compare \cite[Prop.~1.8]{michalek})
\begin{lemm} \label{lem-singularitiesofx16}
$$
\sing(\XX) \cap \Pi = \{ (0:\ldots:x_5:x_6:x_7)  \, : \, \rank( \macC(x_{5},x_{6},x_{7}))\le3 \}
$$
In particular, if the set $\sing(\XX) \cap \Pi$ is finite, then it consists of at most $10$ points.
\end{lemm}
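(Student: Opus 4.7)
The plan is to compute the Jacobian of the defining quadrics of $\XX$ at a point of $\Pi$, exploit the block form of $\mq_i$ to identify its nontrivial part with $\macC$, and then count by a standard degeneracy-locus formula.

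Since $\XX$ is the complete intersection of four quadrics in $\PP_7$, a point $p$ is singular iff the $4\times 8$ Jacobian whose $i$-th row is $\nabla Q_i(p) = 2\mq_i p$ has rank at most three. For $p = (0:\ldots:0:x_5:x_6:x_7) \in \Pi$, the block form of $\mq_i$ forces the last three entries of $\mq_i p$ to vanish; the first five entries form the column $\macB_i^T (x_5, x_6, x_7)^T = x_5 \macl_i^T + x_6 \macm_i^T + x_7 \macn_i^T$. Hence the three rightmost columns of the Jacobian are zero and its rank coincides with that of the $4\times 5$ submatrix $J'(p)$ consisting of the first five columns.

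A direct inspection shows that $J'(p)^T$ is exactly $\macC(x_5, x_6, x_7)$ (its $i$-th column being $x_5 \macl_i^T + x_6 \macm_i^T + x_7 \macn_i^T$), so $p\in\sing(\XX)\cap\Pi$ iff $\rank \macC(x_5, x_6, x_7) \leq 3$, which gives the first claim.

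For the cardinality bound, I view $\macC$ as a morphism of vector bundles $\mathcal{O}_{\Pi}^{\oplus 4} \to \mathcal{O}_{\Pi}(1)^{\oplus 5}$ on $\Pi \cong \PP_2$. The set in question is the first degeneracy locus $\{\rank \macC \leq 3\}$, of expected codimension $(5-3)(4-3)=2$, so under the finiteness assumption it is zero-dimensional of the expected dimension. The Thom--Porteous formula (applied with $c(\mathcal{O}_\Pi^{\oplus 4}) = 1$) then gives its class as
\[
c_2\bigl(\mathcal{O}_{\Pi}(1)^{\oplus 5}\bigr) \;=\; 10\,H^2,
\]
where $H$ denotes the hyperplane class of $\Pi$. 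Thus the locus has length $10$ and so at most $10$ (reduced) points. The only subtle step is matching the row/column conventions in the transpose identification $J'(p)^T = \macC(x_5,x_6,x_7)$; the Porteous computation itself is routine.
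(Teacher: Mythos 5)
Your proposal is correct and follows essentially the same route as the paper: the paper likewise reduces singularity at a point of $\Pi$ to linear dependence of the differentials $(\mq_i x)^T$, whose only nonzero part is governed by $\macC(x_5,x_6,x_7)$ (the paper passes through the auxiliary condition $\macB(y)^T(x_5,x_6,x_7)^T=0$ and the identity $\macC(x_5,x_6,x_7)y=\macB(y)^T(x_5,x_6,x_7)^T$, but this is the same computation as your transpose identification of the Jacobian). For the count, the paper invokes the degree $10$ of the variety of $4\times5$ matrices of rank $\le 3$, which is exactly the Giambelli/Thom--Porteous number you compute as $c_2(\mathcal{O}_{\Pi}(1)^{\oplus 5})=10H^2$.
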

\begin{proof}
Observe that the intersection $\XX$
is singular at a point $x$, iff the differentials 
$dQ_{i}(x)=(\mq_{i}x)^{T}$
of quadratic forms $Q_{i}$ at $x$ are linearly dependent, that is if
there exists $(y_{0}:\dots:y_{3})\in\PP_{3}$ such that 
\[\sum_{i=0}^{3} y_{i}\mq_{i}x=0.\]
For  $x=(0:\dots:0:x_{5}:x_{6}:x_{7})\in \Pi$ the above condition reduces
to 
$\sum y_{i}(x_{5}\macl_{i}^{T}+x_{6}\macm_{i}^{T}+x_{7}\macn_{i}^{T})=0.$ \\
We can rewrite the latter as
\begin{equation} \label{eq-kernelsingularities}
\macB(y)^{T}(x_{5},x_{6},x_{7})^{T}=0.
\end{equation} 
For a fixed $y\in\PP_{3}$ there exists
a point in $\Pi$ satisfying the above relation iff $\rank( \macB(y))\le2$.
Moreover, for every $(x_5, x_6, x_7)$ and $y$ we have
\begin{equation} \label{eq-useful}
\macC(x_{5},x_{6},x_{7})y=\macB(y)^{T}(x_{5},x_{6},x_{7})^{T} \, .
\end{equation}
Therefore, 
$(0,\dots,0,x_{5},x_{6},x_{7})$ is a singularity
of $\XX$ iff there exist $y\in\PP_{3}$ such that 
$\macC(x_{5},x_{6},x_{7})y=0$ or equivalently
\[\rank( \macC(x_{5},x_{6},x_{7}))\leq 3.\] 

Finally, suppose that the set  $\sing(\XX) \cap \Pi$ is finite. Then, the number of its elements does not exceed the degree
of the determinantal variety of $4\times5$ matrices of rank $\leq 3$. The latter is $10$ by \cite[Ex.~14.4.14]{FultonInters}  (see also \cite{jozefiak}, \cite{pragacz2}).
\end{proof}

From now on we make the following {\bf assumption}: 

\vspace*{2ex}
\noindent
{\bf [\exactlytensingularities]:} {\sl $\XX$ has exactly $10$ singularities on $\Pl$ and is smooth away from the plane $\Pl$,}

\vspace*{2ex}
As an immediate consequence of  [\exactlytensingularities] we obtain  
 \begin{rem} \label{rem-rzadB} 
 For each $y \in \PP_3$ we have $ \rank(\macB(y)) \geq 2$.  Indeed, we assumed that $\XX$ has only isolated singularities on $\Pl$. Therefore, for a fixed $y \in \PP_3$, there exists
 at most one point in $\Pl$ satisfying the relation \eqref{eq-kernelsingularities}, so  $\rank(\macB(y))$ cannot be lower than $2$.
 \end{rem}
Lemma~\ref{lem-singularitiesofx16} and \cite{cheltsov} support the
following conjecture.  
\begin{conj}
a) A nodal complete  intersection of four quadrics in $\PP_{7}$ with at
most nine nodes is $\QQ$-factorial. 

\noindent b)  A nodal complete  intersection of four quadrics in $\PP_{7}$ with
exactly ten nodes that is not $\QQ$-factorial contains a plane $\Pl$.
\end{conj}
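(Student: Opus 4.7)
The plan is to attack both parts of the conjecture via a defect formula for nodal Calabi--Yau complete intersections, following the circle of ideas of Clemens--Dimca and the reference \cite{cheltsov} already cited in the statement. For a nodal $X = Q_0 \cap \ldots \cap Q_3 \subset \PP_7$ with singular locus $\Sigma = \{p_1, \ldots, p_k\}$ consisting of ordinary double points, the failure of $\QQ$-factoriality is measured by the defect $\delta(X) := \rank(\operatorname{Cl}(X)) - \rank(\operatorname{Pic}(X))$, and an adaptation of Cheltsov's technique to complete intersections of type $(2,2,2,2)$ expresses $\delta(X)$ as the failure of $\Sigma$ to impose independent conditions on a natural linear system built from the net $\WWW$ of defining quadrics. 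The first step would be to write down this formula explicitly in the form
$$\delta(X) \;=\; k \;-\; \dim\bigl(\text{image of a suitable evaluation map associated to } \WWW \text{ at } \Sigma\bigr).$$

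For part (a), given such a formula it suffices to show that any configuration $\Sigma$ of at most $9$ nodes arising as $\sing(X)$ imposes independent conditions on the relevant linear system. Here the essential input is that $\Sigma$ is not arbitrary: it is a zero-dimensional stratum of the determinantal locus appearing in Lemma~\ref{lem-singularitiesofx16}, which already constrains its position strongly. A direct linear-algebraic check, based on the fact that such a stratum cannot be too degenerate (in particular cannot be contained in a line of $\PP_7$, in analogy with [A2]), should then yield independence of conditions for $k \leq 9$, hence $\delta(X) = 0$ and $X$ is $\QQ$-factorial.

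For part (b), assume $k = 10$ and $\delta(X) \geq 1$. Then $\Sigma$ fails to impose independent conditions on the linear system above, producing an extra Weil divisor class $[D]$ not proportional to the hyperplane class $H$ in $\operatorname{Cl}(X) \otimes \QQ$. Using $K_X \equiv 0$ and Riemann--Roch, one produces an effective representative for $D$, and then computes $D\cdot H^2$ and $D^2\cdot H$ from intersection theory on $X$ together with the constraint that $D$ accounts for the full defect coming from all $10$ nodes. The numerical match between the bound $k=10$ from Lemma~\ref{lem-singularitiesofx16} and the degree $10$ of the $4\times 5$ rank-$3$ determinantal variety (\cite[Ex.~14.4.14]{FultonInters}) is precisely what should force $\deg(D) = 1$, i.e.\ $D$ is a plane. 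Finally, one checks that this plane is actually contained in $X$ by noting that its intersection with $X$ already contains the $10$ nodes of $\Sigma$ with multiplicity, so by Bezout it must be entirely contained in $X$.

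The main obstacle is the final step of part (b): excluding the possibility that the extra effective Weil divisor is a higher-degree surface (a quadric cone, a cubic scroll, etc.) passing through the ten nodes rather than a plane actually contained in $X$. A case analysis on $\deg(D)$, combined with the observation that each candidate surface would produce an upper bound on the defect contribution incompatible with $\delta(X)\geq 1$ at exactly $k=10$ nodes, should rule out all alternatives; the Calabi--Yau geometry of $X$ (in particular triviality of the canonical class and the rigidity coming from being a CI of four quadrics) is the decisive ingredient. Analogous arguments appear in \cite{cheltsov} for cubic and quartic threefolds, but transplanting them to the $(2,2,2,2)$ setting in $\PP_7$ is the nontrivial technical heart of the proof.
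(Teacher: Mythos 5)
The statement you are trying to prove is explicitly labelled a \emph{Conjecture} in the paper: the authors offer no proof at all, only the remark that Lemma~\ref{lem-singularitiesofx16} and the reference \cite{cheltsov} ``support'' it. So there is no paper proof to compare against, and the real question is whether your proposal closes the gap. It does not: it is a research programme rather than a proof, and you acknowledge as much (``should then yield'', ``should rule out'', ``the nontrivial technical heart of the proof''). The first and most serious gap is the defect formula itself. For nodal hypersurfaces in $\PP_4$ there are established results (Clemens, Werner, Cheltsov, \cite{rams-hab}) expressing the defect as the failure of the nodes to impose independent conditions on an explicit linear system of a specific degree; for a $(2,2,2,2)$ complete intersection in $\PP_7$ no such formula is written down here, and you do not say which linear system plays this role or why the formula holds. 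Without that, neither part of the argument can start.

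Even granting such a formula, part (a) requires showing that \emph{every} configuration of at most nine nodes arising as $\sing(X)$ imposes independent conditions; your appeal to ``a direct linear-algebraic check'' and an analogy with [A2] is not an argument, and in the hypersurface case the corresponding bounds in \cite{cheltsov} are the hard content of the theorem, not a formality. In part (b) the logic is also off in places: an effective Weil divisor $D$ on $X$ is by definition a surface contained in $X$, so there is nothing to ``check by Bezout''; the actual difficulty is proving $\deg D = 1$, and the claimed ``numerical match'' between the ten nodes and the degree $10$ of the rank-$\le 3$ determinantal variety is a coincidence of numbers, not a computation that forces the degree of $D$ down to one. The case analysis excluding quadrics, cubic scrolls, etc.\ is exactly where the work lies and is left entirely open. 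In short, your outline is a plausible strategy consistent with how such statements are proved for hypersurfaces, but every step that would make it a proof is missing.
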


\newcommand{\PPP}{\mathcal P}  
\begin{lemm} \label{lem-quadrics} Suppose that  [\exactlytensingularities] holds.

\noindent
a)  The ideal of the set $\sing(\XX) \cap \Pi$ is generated by 
all $4 \times 4$  minors of the matrix $\macC(x_{5},x_{6},x_{7})$.
In particular, the ideal in question contains no cubics.

\noindent
b) For each   $x \in \mbox{sing}(\XX)$ there exists precisely one quadric in  $\WWW$  such that  $x$ is its singularity. 

\noindent
c) There exist three quadrics in the web $\WWW$ that meet transversally.

\noindent
d)  The set
$\{y \in \PP_3 \, : \, \rank(\macB(y)) = 2 \}$ consists of precisely $10$ points.

\end{lemm}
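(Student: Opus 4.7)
The plan is to address parts~(a) and~(b) together via the Eagon--Northcott resolution of the $4 \times 4$ minors of $\macC$, then~(d) via an incidence correspondence pairing $\sing(\XX) \cap \Pl$ with the rank-drop locus of $\macB$, and finally~(c) by a Bertini-type dimension count. For (a) and (b), I apply the Eagon--Northcott complex (equivalently, Hilbert--Burch) to the $5 \times 4$ matrix $\macC$ of linear forms on $\PP_2$: Lemma~\ref{lem-singularitiesofx16} shows set-theoretically that $\sing(\XX) \cap \Pl$ equals the zero locus of the five $4 \times 4$ minors, under [\exactlytensingularities] the determinantal scheme $Z$ is zero-dimensional of expected codimension~$2$, and the Porteous-type bound already used in the proof of that lemma gives $\deg Z = 10$. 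Matching this with $|Z_{\mathrm{red}}| = 10$ makes $Z$ reduced, so its ideal coincides with that of $\sing(\XX) \cap \Pl$; the Eagon--Northcott resolution
\[
 0 \to \mathcal{O}_{\PP_2}(-5)^{4} \to \mathcal{O}_{\PP_2}(-4)^{5} \to I \to 0
\]
places all minimal generators in degree~$4$, ruling out cubics and proving~(a). Reducedness also forces $\rank \macC(x) = 3$ at every $x \in Z$: a drop to rank $\leq 2$ would, after bringing $\macC$ to a local block form, yield a scheme locally generated by the $2 \times 2$ minors of a $3 \times 2$ block with vanishing entries, of local length $> 1$, contradicting reducedness. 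Thus $\dim \ker \macC(x) = 1$ at each singularity, giving~(b).

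For (d), I would use the incidence variety
\[
 \Gamma := \{(x, y) \in \PP_2 \times \PP_3 : \macC(x)\, y = 0\},
\]
which by~\eqref{eq-useful} equals $\{(x,y) : \macB(y)^{T} x = 0\}$. Projection of $\Gamma$ to $\PP_2$ has image $\sing(\XX) \cap \Pl$ ($10$ points by [\exactlytensingularities]) with singleton fibres by~(b), so $|\Gamma| = 10$. Projection to $\PP_3$ lands in $\{y : \rank \macB(y) \leq 2\}$, which coincides with $\{y : \rank \macB(y) = 2\}$ by Remark~\ref{rem-rzadB}, and is surjective since any such $y$ has a non-trivial $x \in \ker \macB(y)^{T}$ yielding $(x, y) \in \Gamma$. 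For injectivity, two distinct $x_1, x_2 \in \Pl$ sharing a common $y$ would force the (linear) singular locus of the quadric $\sum y_i Q_i$ to contain the whole line $\overline{x_1 x_2} \subset \Pl$, producing an infinite family of points at which $dQ_0, \ldots, dQ_3$ are linearly dependent, contradicting the finiteness in [\exactlytensingularities].

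For~(c), my approach is a Bertini-type dimension count on $G(3, \WWW) \cong \PP_3$. The base locus of $V \in G(3, \WWW)$ is singular at $x$ iff $x \in \operatorname{base}(V)$ and some non-zero $Q \in V$ is singular at $x$. For $x \in \Pl$ this forces $x \in \sing(\XX) \cap \Pl$ with $V$ containing the unique singular quadric $Q(y_i)$ at $x$ from~(b) -- a codimension-one condition, excluded outside $10$ hyperplanes of $G(3, \WWW)$. For $x \notin \Pl$ with $x \in \XX$, smoothness from [\exactlytensingularities] makes the four differentials linearly independent, so no three can be dependent. For $x \notin \XX$, the singular quadrics in $\WWW$ form the degree-$8$ discriminant surface $S_8 \subset \PP(\WWW)$; for a generic $V$, the singular points of the corresponding rank-$7$ quadrics of $V$ trace a curve in $\PP_7$, meeting the $4$-dimensional $\operatorname{base}(V)$ in expected dimension $-2$, hence empty. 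The main delicate step I expect is verifying that this last expected-dimension estimate genuinely produces a proper sublocus of $G(3, \WWW)$ for our specific $\WWW$, not merely for a fully generic web.
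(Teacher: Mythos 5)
Your treatments of (a), (b) and (d) are correct and, modulo packaging, coincide with the paper's. For (a)--(b) the paper works in the ambient $\PP_{19}$ of $4\times5$ matrices: it slices the arithmetically Cohen--Macaulay, degree-$10$ determinantal variety $\xs$ by the $2$-plane parametrized by $\macC$ and invokes \cite[Prop.~18.13]{eisenbud} to conclude that the resulting length-$10$ scheme is reduced; your Hilbert--Burch computation carried out directly on $\PP_2$ is the same argument (Cohen--Macaulayness of the minor ideal in expected codimension plus the degree count forcing reducedness), and your local-length argument for $\rank\macC(x)=3$ is a reformulation of the paper's observation that the ten points, being a reduced linear section, must avoid $\sing(\PP(\xs))=\{\rank\le2\}$. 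Your incidence-variety proof of (d) is exactly the paper's bijection between the ten nodes and the ten rank-$2$ points of $\macB$, written out more explicitly.

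The genuine gap is in (c), and you have correctly located it yourself. The paper disposes of (c) with ``follows from b) by standard arguments,'' but the standard argument is not the expected-dimension count you propose, and that count does break down: at this point of the paper nothing excludes quadrics of rank $\le 5$ in $\WWW$ (Remark~\ref{rem-noquadricsofrankfour} and Lemma~\ref{rem-rank-five} are proved later and depend on the present lemma), so the union of the singular loci of the singular quadrics of a net $V$ may contain planes rather than being a curve, the degeneracy locus $\{x\in\PP_7:\exists\, y\neq 0,\ \sum y_i\mq_i x=0\}$ may have excess dimension, and the ``expected dimension $-2$'' conclusion is unsupported. The clean way to close this is Bertini in the form of generic smoothness: a net $V\subset\WWW$ corresponds to a point $p_V\in\PP_3$, and $\operatorname{base}(V)\setminus\XX$ is precisely the fiber over $p_V$ of the morphism $f=(Q_0:\dots:Q_3)\colon\PP_7\setminus\XX\to\PP_3$; moreover $\operatorname{base}(V)$ is singular at a point $x\notin\XX$ if and only if $df_x$ fails to be surjective. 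In characteristic $0$, generic smoothness gives a dense open set of $p\in\PP_3$ over which every fiber point is a smooth point of the fiber, which handles all of $\operatorname{base}(V)\setminus\XX$ at once, with no stratification of the discriminant by rank. Points of $\XX\setminus\sing(\XX)$ are harmless for every net since the four differentials are independent there, and the ten nodes are handled exactly as you do via part (b), by avoiding the ten hyperplanes of nets containing the unique singular quadric at each node.
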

\begin{proof} a) Recall that  the determinantal variety  $\PP(\xs) \subset \PP_{19}$ given by the condition
\[
\rank
\left[
\begin{array}{ccc}
z_0 & \ldots & z_4 \\
\vdots &     & \vdots  \\
z_{15} & \ldots & z_{19}
\end{array}
\right] \leq 3
\]
has dimension $17$ and  degree $10$. 
Moreover, the  ideal generated by  $4 \times 4$ minors of the above matrix is perfect by \cite{ea} (see also \cite[Cor.~2.8]{bv}).
Therefore, the ring $\CC[z_0, \ldots, z_{19}]/_{\operatorname{I}(\xs)}$ is Cohen-Macaulay.
The map $(x_5,x_6,x_7) \mapsto \macC(x_{5},x_{6},x_{7})$ parametrizes a $3$-plane $\PPP \subset \CC^{20}$
 that meets $\xs$ along  ten lines. Since  the ideal $\mbox{I}(\PPP)$
 in  the ring $\CC[z_0, \ldots, z_{19}]/_{\operatorname{I}(\xs)}$
is generated by  $17$ linear forms, it
 satisfies the assumptions of \cite[Prop.~18.13]{eisenbud}.
Consequently, the quotient  $\CC[z_0, \ldots, z_{19}]/{(\mbox{I}(\xs)+\mbox{I}(\PPP))}$ is $1$-dimensional
Cohen-Macaulay and the ideal $\mbox{I}(\xs)+\mbox{I}(\PPP)$ coincides with its radical.

\noindent
 b) The plane $\PP(\PPP) \subset  \PP_{19}$
 meets the variety $\PP(\xs)$ in  exactly ten points,
 so none of the latter belongs to $\sing(\PP(\xs))$. But, as  one can check by direct computation
(see also \cite{todd}), 
 all points of $\xs$ that satisfy the condition 
\[
\rank
\left[
\begin{array}{ccc}
z_0 & \ldots & z_4 \\
\vdots &     & \vdots  \\
z_{15} & \ldots & z_{19}
\end{array}
\right] \leq 2
\]
are its  singularities. The latter implies that 
\begin{equation} \label{eq-rankonlythree}
\forall_{x \in \sing(\XX)} \quad \rank(\macC(x_{5},x_{6},x_{7})) = 3 \, .
\end{equation}
Consequently, there exists precisely one $y \in \PP_3$ that lies in the kernel of the matrix 
$\macC(x_{5},x_{6},x_{7})$. By \eqref{eq-useful}, 
the latter is equivalent to the condition $(0: \ldots: x_5 : x_6 :x_7) \in \sing(Q(y))$.
In this way we have shown the claim b).

\noindent
c) follows from b) by standard arguments. 

\noindent
d) Suppose that a point $y \in \PP_3$ satisfies  the relation 
\eqref{eq-kernelsingularities} for two various points in $\Pl$. Then, the line spanned
by both points in question lies in the kernel of the matrix $\macB(y)$ and $\rank(\macB(y)) < 2$, which is impossible by Remark~\ref{rem-rzadB}. In this way we have shown that 
 $$
 \# \{y \in \PP_3 \, : \, \rank(\macB(y)) = 2 \} \geq  \# \sing(\XX).
 $$ 
The other inequality has been shown in the proof of part b). 
\end{proof}

\begin{lemm} \label{lemm-noderecognition}
Assume that $Z_P=\{f(y_1, \ldots, y_4) = 0\}\subset \CC^4$ is a three-dimensional 
isolated hypersurface singularity that contains the  germ of the plane $\{y_1 = y_2 = 0\}$. 
If  the ideal 
\begin{equation} \label{eq-nodalideal} \nonumber
\langle \frac{\partial f}{\partial y_1}, \ldots ,  \frac{\partial f}{\partial y_4}, f, y_1, y_2\rangle \subset \mathcal O_{\CC^4,P}
\end{equation}
is maximal, then $Z_P$ is a node.
\end{lemm}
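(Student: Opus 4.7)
The plan is to produce explicit analytic coordinates at $P$ in which $f$ takes the standard nodal form $u_1u_3+u_2u_4$. First, since $Z_P\supset \{y_1=y_2=0\}$, we have $f(0,0,y_3,y_4)=0$ in $\CC\{y_3,y_4\}$, so by the Malgrange/Weierstrass preparation argument (or simply by expanding $f$ as a power series and collecting terms) we can write
\[
f = y_1\,g_1 + y_2\,g_2
\]
for some $g_1,g_2\in \mo_{\CC^{4},P}$. Since $P$ is singular, $\partial f/\partial y_1(P)=g_1(P)=0$ and similarly $g_2(P)=0$, i.e.\ $g_1,g_2$ lie in the maximal ideal $\macm_P$.

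Next I would exploit the hypothesis on the ideal $I=\langle \partial f/\partial y_i,f,y_1,y_2\rangle$. Reducing modulo $\langle y_1,y_2\rangle$ kills $f$ and the derivatives $\partial f/\partial y_3,\partial f/\partial y_4$, while $\partial f/\partial y_1\equiv g_1$, $\partial f/\partial y_2\equiv g_2$. Thus the image of $I$ in $\mo_{\CC^4,P}/\langle y_1,y_2\rangle \cong \CC\{y_3,y_4\}$ is $\langle g_1(0,0,y_3,y_4),\,g_2(0,0,y_3,y_4)\rangle$. The maximality of $I$ in $\mo_{\CC^4,P}$ forces this image to be the maximal ideal of $\CC\{y_3,y_4\}$, and by Nakayama's lemma this is equivalent to saying that the linear parts (in $y_3,y_4$) of $g_1(0,0,y_3,y_4)$ and $g_2(0,0,y_3,y_4)$ form a basis of $\langle y_3,y_4\rangle/\langle y_3,y_4\rangle^2$. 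After a $\CC$-linear change of the coordinates $y_3,y_4$ we may therefore assume
\[
g_1 \equiv y_3, \qquad g_2 \equiv y_4 \pmod{\langle y_1,y_2\rangle + \langle y_3,y_4\rangle^2}.
\]

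Finally I would set $u_1:=y_1$, $u_2:=y_2$, $u_3:=g_1(y_1,y_2,y_3,y_4)$, $u_4:=g_2(y_1,y_2,y_3,y_4)$. The Jacobian of $(u_1,\ldots,u_4)$ with respect to $(y_1,\ldots,y_4)$ at $P$ is lower block-triangular with diagonal blocks equal to the $2\times2$ identity, hence invertible, so $(u_1,u_2,u_3,u_4)$ are local analytic coordinates at $P$. In these coordinates
\[
f = u_1 u_3 + u_2 u_4,
\]
which is the standard equation of a three-dimensional ordinary double point, so $Z_P$ is a node.

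The main obstacle is the second step: one has to read off from the (rather opaque) maximality condition that $g_1,g_2$ have independent linear parts in $y_3,y_4$. Once this is extracted, the change of coordinates is essentially forced, and the remaining verifications are straightforward.
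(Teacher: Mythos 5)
Your proof is correct and rests on exactly the same two pillars as the paper's: the decomposition $f=y_1g_1+y_2g_2$ coming from $f(0,0,y_3,y_4)=0$, and the observation that maximality of the ideal, read modulo $\langle y_1,y_2\rangle$, forces $g_1(0,0,y_3,y_4)$ and $g_2(0,0,y_3,y_4)$ to have independent linear parts in $y_3,y_4$. The only divergence is in the final step: the paper computes the Hessian of $f$ at $P$ explicitly and shows it equals $-\det\bigl[\partial g_i/\partial y_j\bigr]_{i=1,2;\,j=3,4}^2\neq 0$, whereas you upgrade $(y_1,y_2,g_1,g_2)$ to an analytic coordinate system and exhibit the normal form $u_1u_3+u_2u_4$ directly. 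Both are valid; your version is marginally more self-contained in that it does not invoke the characterization of a node via nonvanishing of the Hessian, while the paper's is a shorter one-line determinant computation once the $2\times 2$ block is known to be invertible.
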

\begin{proof}
We are to show that hessian of $f$ in $P$ does not vanish.
Let $f_1, f_2 \in   O_{\CC^4,P}$ satisfy the condition $f = y_1 \cdot f_1 + y_2 \cdot f_2$. By direct computation we have
\begin{equation} \label{eq-nodalideal2} 
\langle f_1, f_2, y_1, y_2 \rangle 
=
\langle y_1, y_2, y_3, y_4 \rangle \, . 
\end{equation}
Consider the linear parts $f_{i}^{(1)} = \sum_{j=1}^{4} f_{i,j}^{(1)} y_j$
for $ i = 1,2$. 
Then hessian of $f$ in $P$ is given by
\[
\det
\left[
\begin{array}{cccc}
f_{1,1}^{(1)} & \frac{f_{1,2}^{(1)} + f_{2,1}^{(1)}}{2} &  \frac{f_{1,3}^{(1)}}{2}  &  \frac{f_{1,4}^{(1)}}{2}  \\
 \frac{f_{1,2}^{(1)} + f_{2,1}^{(1)}}{2} & f_{2,2}^{(1)}   &  \frac{f_{2,3}^{(1)}}{2}  &  \frac{f_{2,4}^{(1)}}{2} \\
 \frac{f_{1,3}^{(1)}}{2}  &  \frac{f_{2,3}^{(1)}}{2}  & 0 & 0\\
 \frac{f_{1,4}^{(1)}}{2}  &  \frac{f_{2,4}^{(1)}}{2}  & 0 & 0\\
\end{array}
\right] = 
- \det
\left[
\begin{array}{cc}
 \frac{f_{1,3}^{(1)}}{2}  &  \frac{f_{2,3}^{(1)}}{2} \\
 \frac{f_{1,4}^{(1)}}{2}  &  \frac{f_{2,4}^{(1)}}{2}
\end{array}
\right]^2 \, .
\]
To show that the right-hand side of the latter equality  does not vanish
put  $y_1 = y_2 = 0$ in \eqref{eq-nodalideal2}.
\end{proof}

\begin{lemm} \label{lem-A1nodes}
If [\exactlytensingularities] holds, then all singularities of $\XX$ are nodes (i.e. A$_1$ points).
\end{lemm}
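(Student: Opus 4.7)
The plan is to reduce the claim to the node--recognition Lemma~\ref{lemm-noderecognition} by presenting $\XX$ locally, at each singular point, as a hypersurface inside a smooth fourfold. Fix $P \in \sing(\XX)$; by~[\exactlytensingularities] we have $P \in \Pi$, and by Lemma~\ref{lem-quadrics}~b) there is a unique $y \in \PP_3$ with $P \in \sing(Q(y))$. After a linear change of basis in the web $\WWW$ one may assume $y = (1\!:\!0\!:\!0\!:\!0)$, so that $Q_0$ is the singular quadric through $P$ and $\mq_0 P = 0$ (equivalently, $\macn_0 = 0$). The uniqueness of $y$ then forces $\macn_1, \macn_2, \macn_3$ to be linearly independent in $\CC^5$, which means that the three gradients $\mq_1 P, \mq_2 P, \mq_3 P$ are linearly independent in $\CC^8$. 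Consequently, $Y := \{Q_1 = Q_2 = Q_3 = 0\}$ is smooth of dimension four in a neighbourhood of $P$, and $\XX$ is cut out in $Y$ near $P$ by the single equation $f := Q_0|_Y$.

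Next, I would choose local coordinates $y_1, \ldots, y_4$ on $Y$ around $P$ so that $\Pi = \{y_1 = y_2 = 0\}$, which is possible because $T_P \Pi$ is a $2$--plane inside $T_P Y$. Writing $f = y_1 g_1 + y_2 g_2$ for some germs $g_1, g_2 \in \mathcal O_{Y,P}$, the task becomes to verify that the ideal $\langle \partial f/\partial y_1, \ldots, \partial f/\partial y_4, f, y_1, y_2 \rangle \subset \mathcal O_{Y,P}$ is maximal. Reducing modulo $(y_1, y_2)$ this is equivalent to showing that $(g_1|_\Pi, g_2|_\Pi)$ is the maximal ideal of $\mathcal O_{\Pi,P}$, since $f$, $\partial f/\partial y_3$ and $\partial f/\partial y_4$ all lie in $(y_1, y_2)$ while $\partial f/\partial y_i|_\Pi = g_i|_\Pi$ for $i = 1, 2$. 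Once this holds, Lemma~\ref{lemm-noderecognition} identifies $P$ as an $A_1$ singularity of $\XX$.

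The input that makes this maximality work is Lemma~\ref{lem-quadrics}~a). The scheme $\sing(\XX) \cap \Pi$, viewed inside $\Pi$ via the Jacobian ideal of the complete intersection $\XX$, is locally at $P$ precisely the zero scheme of $g_1|_\Pi$ and $g_2|_\Pi$. On the other hand, the same rank comparison as in the proof of Lemma~\ref{lem-singularitiesofx16} shows that this scheme coincides, as a subscheme of $\Pi$, with the zero scheme of the $4 \times 4$ minors of $\macC(x_5, x_6, x_7)$: for $x \in \Pi$ the first five entries of $\mq_i x$ form exactly the $i$--th column of $\macC(x_5, x_6, x_7)$, so the rank--$\leq 3$ locus of the Jacobian on $\Pi$ is cut out scheme--theoretically by the minors of $\macC$. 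By the Cohen--Macaulay/radicality computation inside the proof of Lemma~\ref{lem-quadrics}~a), this determinantal ideal equals its own radical; hence $\sing(\XX) \cap \Pi$ is reduced at each of its ten points, $(g_1|_\Pi, g_2|_\Pi)$ is the maximal ideal of $\mathcal O_{\Pi, P}$, and Lemma~\ref{lemm-noderecognition} delivers the conclusion.

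The main technical obstacle I anticipate is precisely this last identification: matching the scheme structure on $\sing(\XX) \cap \Pi$ coming from the Jacobian ideal of $\XX$ with the determinantal one supplied by $\macC$. Only this identification lets us transport the global reducedness output of Lemma~\ref{lem-quadrics}~a) into the infinitesimal linear--independence of the $(y_3, y_4)$--linear parts of $g_1, g_2$ at $P$ required by Lemma~\ref{lemm-noderecognition}.
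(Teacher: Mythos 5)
Your proposal is correct and follows essentially the same route as the paper: present $\XX$ near each singular point as a hypersurface in a smooth complete intersection of three of the quadrics, identify the scheme $\sing(\XX)\cap\Pl$ with the reduced determinantal scheme of the $4\times4$ minors of $\macC$ via the Cohen--Macaulay argument of Lemma~\ref{lem-quadrics}.a, and feed the resulting maximality of the local ideal into Lemma~\ref{lemm-noderecognition}. The only cosmetic difference is that the paper fixes one globally smooth $Y=Q_0\cap Q_1\cap Q_2$ and phrases reducedness as a length count (total length $10$ spread over $10$ points, hence length $1$ at each), whereas you choose the three quadrics nonsingular at the given point and invoke radicality directly.
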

\begin{proof}

Without loss of generality we can assume that
all singularities of $\XX$ lie in the affine chart $x_7 \neq 0$ and the
variety $Y := Q_0 \cap Q_1 \cap Q_2$ is smooth (see Lemma~\ref{lem-quadrics}). 
By abuse of notation we use the  same
symbol to denote a quadric and the dehomogenization of its equation (i.e. $x_7 = 1$).

Observe that putting $x_0=x_1=\dots=x_4=0$ in the
ideal $\langle\bigwedge^4\operatorname{Jac}(Q_0, \ldots, Q_3), Q_0,
\ldots, Q_3\rangle$
we get the ideal in $\CC[x_5,x_6]$ generated by $4\times4$ minors of
the matrix $\macC(x_5,x_6,1)$. In particular, (see Lemma~\ref{lem-singularitiesofx16}) we can compute the dimension
of the $\CC$-vector space
\begin{equation*} 
  \dim( \CC[x_0, \ldots, x_{6}]/_{\langle\bigwedge^4\operatorname{Jac}(Q_0, \ldots, Q_3), Q_0, \ldots, Q_3,x_0,\dots,x_4\rangle}) = 10.
\end{equation*}  
Moreover, the assumption 
  [\exactlytensingularities] yields an isomorphism
\begin{equation*} \label{eq-izomorfizm}
\bigoplus_{P \in \sing(\XX)} {\mathcal O}_{\CC^7,P}/_{\langle\bigwedge^4\operatorname{Jac}(Q_0, \ldots, Q_3), Q_0, \ldots, Q_3, 
x_0, \ldots, x_4\rangle{\mathcal O}_{\CC^7,P}}
 \simeq \CC[x_0, \ldots, x_{6}]/_{\langle\bigwedge^4\operatorname{Jac}(Q_0, \ldots, Q_3), Q_0, \ldots, Q_3, x_0, \ldots, x_4\rangle}
\end{equation*}
Therefore, for each $P \in  \sing(\XX)$, we have
\begin{equation} \label{eq-dim-one}
  \dim(  {\mathcal O}_{\CC^7,P}/_{\langle\bigwedge^4\operatorname{Jac}(Q_0, \ldots, Q_3), Q_0, \ldots, Q_3, 
x_0, \ldots, x_4\rangle{\mathcal O}_{\CC^7,P}}) = 1 \, .
\end{equation} 
 
Fix a point $P \in \sing(\XX)$ and assume that the germ of $Y$ near $P$  can be (analytically)  parametrized 
as the graph of a map $(x_4(x_0, \ldots, x_3),\ldots, x_6(x_0, \ldots, x_3))$. Let $\tilde{Q}_3$ be the composition 
of the above parametrization with (the dehomogenized equation of) the quadric $Q_3$. By direct computation, \eqref{eq-dim-one}
implies that the ideal
$$
\langle\tilde{Q}_3, \frac{\partial{\tilde{Q}_3}}{\partial x_0}, \ldots, 
 \frac{\partial{\tilde{Q}_3}}{\partial x_3}\rangle + \mbox{I}(\Pl) \subset {\mathcal O}_{Y,P}
$$
is maximal. By Lemma~\ref{lemm-noderecognition} the point $P$ is an A$_1$ singularity of $\XX$.
\end{proof}

We introduce the following notation:
\begin{equation} \label{eq-rozdmuchanieplaszczyzny}
\sigma: \tx \rightarrow \XX
\end{equation}
is   the blow-up of $\XX$ along the plane $\Pl$
and $\ts$ stands for
the {\sl strict transform } of the plane $\Pl$ under the blow-up $\sigma$.
The variety $\tx$ is smooth and the blow-up in question replaces the $10$ nodes with $10$ disjoint 
smooth rational curves 
\begin{equation} \label{eq-tenlines}
E_{1},\dots,E_{10} \subset \ts.
\end{equation}

\vspace*{2ex}
\noindent
{\bf Convention:}  {\sl In the sequel, we shall identify smooth points of $\XX$
with their images in $\tx$, i.e. write $P$ instead of $\sigma(P)$ whenever it leads to no ambiguity.}

In the next section we will use the following lemma.
\begin{lemm} \label{lem-hn}
The variety  $\tx$ is a projective Calabi--Yau
manifold with 
the following Hodge diamond
\[
\begin{array}{ccccccc}
&&& 1 &&&\\
&& 0 && 0 &&\\
& 0 && 2 && 0 &\\
1 && 56 && 56 && 1\\
& 0 && 2 && 0 &\\
&& 0 && 0 &&\\
&&& 1 &&&
\end{array}
\]
\end{lemm}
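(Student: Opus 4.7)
The plan is to verify in turn (a) triviality of $K_{\tx}$, (b) the Calabi--Yau conditions $h^1(\mo_{\tx})=h^2(\mo_{\tx})=0$, (c) $e(\tx)=-108$, and (d) $\rho(\tx)=2$, which together pin down the full diamond.

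At each node $P$, by Lemma~\ref{lem-A1nodes} the germ of $\XX$ is analytically $\{xy=zw\}\subset\CC^4$ and $\Pl$ corresponds locally to one of the two rulings, say $\{x=z=0\}$ (this is forced by $\Pl\subset\XX$ passing through $P$). A direct chart computation shows that blowing up this ruling produces the standard small resolution with $\PP^1$-fiber $E_i$ and no exceptional divisor; away from the nodes $\Pl$ is Cartier in $\XX$ and $\sigma$ is an isomorphism. Hence $\sigma$ is crepant and small, so $K_{\tx}=\sigma^*K_{\XX}\simeq\mo_{\tx}$ by adjunction on the $(2,2,2,2)$ complete intersection in $\PP_7$. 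Since $A_1$ singularities are rational, $\sigma_*\mo_{\tx}=\mo_{\XX}$ and $R^i\sigma_*\mo_{\tx}=0$ for $i>0$; thus $h^i(\mo_{\tx})=h^i(\mo_{\XX})$, which equals $(1,0,0,1)$ via the Koszul resolution of $(Q_0,\ldots,Q_3)$ on $\PP_7$ and the vanishing of intermediate cohomology of line bundles there.

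For (c), $\sigma$ replaces $10$ points by $10$ disjoint $\PP^1$s, giving $e(\tx)=e(\XX)+10$. A generic smoothing $X_t$ of $\XX$ is a smooth $(2,2,2,2)$ complete intersection in $\PP_7$; the $A_1$ Milnor fiber is $S^3$ with vanishing Euler characteristic, so $e(\XX)=e(X_t)+10$. A standard Chern-class computation using $c(T_{X_t})=(1+h)^8/(1+2h)^4$ and $\deg X_t=16$ yields $e(X_t)=-128$, hence $e(\tx)=-108$ and $h^{1,1}(\tx)-h^{1,2}(\tx)=-54$.

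For (d), the pulled-back hyperplane class $H$ and the strict transform $\ts$ of $\Pl$ are independent Cartier divisors on $\tx$, since $H\cdot E_i=0$ whereas $\ts\cdot E_i=-1$ on each exceptional curve, so $\rho(\tx)\geq 2$. For the reverse inequality I would invoke the defect formula for small resolutions of nodal Calabi--Yau threefolds (cf.\ Clemens, Werner, Schoen): $b_2(\tx)=h^{1,1}(X_t)+d=1+d$, where $d$ counts the independent homology relations among the vanishing cycles of the ten nodes in $H_3(X_t)$. The plane $\Pl$ supplies exactly one such relation, and Lemma~\ref{lem-quadrics}(a) --- the ideal of the ten nodes on $\Pl$ contains no cubic --- suffices to rule out any further coincidence among them. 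This yields $d=1$, so $h^{1,1}(\tx)=2$, and combined with the Euler characteristic $h^{1,2}(\tx)=56$. I expect this final defect computation to be the main obstacle; the preceding ingredients (adjunction, Koszul, rational singularities, Chern classes on a CI, the small-resolution blow-up formula) are all standard.
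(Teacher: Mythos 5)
Your steps (a)--(c) are sound and consistent with what the paper does: the blow-up along $\Pl$ is indeed a small resolution of the ten nodes, $K_{\tx}$ is trivial, the row $h^{0,i}=(1,0,0,1)$ follows from rationality of the nodes plus Koszul on the $(2,2,2,2)$ complete intersection, and your Euler-number bookkeeping ($e(X_t)=-128$, $+10$ for the conifold degeneration, $+10$ for the small resolution) reproduces the paper's $e(\tx)=-108$. Likewise $\rho(\tx)\ge 2$ via $H\cdot E_i=0$, $\ts\cdot E_i=-1$ is fine. The genuine gap is exactly where you anticipate it: the inequality $h^{1,1}(\tx)\le 2$, i.e.\ that the defect of the ten nodes equals $1$. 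You invoke a defect formula $b_2(\tx)=1+d$ with $d$ the number of independent relations among the vanishing cycles, and then assert that Lemma~\ref{lem-quadrics}(a) (no cubic through the ten nodes) ``rules out any further coincidence.'' No argument is given identifying the defect of a nodal $(2,2,2,2)$ complete intersection with the failure of the nodes to impose independent conditions on \emph{cubics}; the known formulas of this type (Clemens, Werner, Cynk) are specific to the ambient geometry and the twist, and since all ten nodes are coplanar they automatically impose very few conditions on low-degree forms of $\PP_7$ restricted to $\Pl$, so a naive count of this kind would in fact overestimate the defect. As stated, $d=1$ is an assertion, not a proof, and it is precisely the content of the lemma that is hardest to obtain.

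The paper closes this gap by an entirely different, direct computation: it realizes $\tx$ as a divisor in the blow-up $\ty$ of the smooth threefold $Y=Q_0\cap Q_1\cap Q_2$ along $\Pl$, computes $\sigma_*$ and $R^1\sigma_*$ of $\moty(-k\tx)$, $\Omega^1_{\ty}$ and $\Omega^1_{\ty}(-\tx)$, and chases the Leray spectral sequence together with the conormal and cotangent exact sequences to obtain $H^1(\Omega^1_{\tx})\cong H^1(\Omega^1_{\ty})\cong\CC^2$ (the ``$2$'' coming from $R^1\sigma_*\Omega^1_{\ty}\cong\mo_{\Pl}$ added to $h^{1,1}(Y)=1$, with the vanishing $H^i(\Omega^1_{\ty}(-\tx))=0$ for $i\le 2$ doing the real work). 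If you want to keep your topological route you would need to either prove the relevant defect formula for this complete intersection and carry out the linear-algebra count honestly, or replace step (d) by a sheaf-theoretic argument of the paper's kind; the remaining ingredients of your proposal can stay as they are.
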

\begin{proof}
By Lemma~\ref{lem-quadrics}.b we can 
assume that $Y=Q_{0}\cap Q_{1}\cap Q_{2}$ is smooth.  
Let $\sigma:\tilde Y\lra Y$ be the blow--up of $Y$ along $\Pl$ with
exceptional divisor $E$.
We have
\begin{eqnarray*}
&&  \sigma_{*}\moty(kE)=\mo_{Y}, \text{ for } k\ge0,\\
&&  R^{1}\sigma_{*}\moty(E)=0,\\
&&  R^{1}\sigma_{*}\moty(2E)=\mo_{\Pl}(-1).
\end{eqnarray*}
Since $\moty(\tx)=\sigma^{*}\mo_{Y}(X)\otimes\moty(-E)$ using the
projection formula we get 
\begin{eqnarray*}
&&  \sigma_{*}\moty(-k\tx)=\mo_{Y}(-kX), \text{ for } k\ge0,\\
&&  R^{1}\sigma_{*}\moty(-\tx)=0,\\
&&  R^{1}\sigma_{*}\moty(-2\tx)=\mo_{\Pl}(-5).
\end{eqnarray*}
The Leray spectral sequence and the Kodaira vanishing imply 
\[H^{i}(\mo_{\ty}(-\tx))=0\text{ for }i\le3,\quad 
H^{4}(\mo_{\ty}(-\tx))\cong \mathbb C.\]
Since 
\begin{eqnarray*}
&&  H^{i}(\mo_{Y}(-2Y))=0, \text{ for }i\le3,\\
&&  H^{4}(\mo_{Y}(-2X))\cong H^{0}(\mo_{Y}(2))\cong \mathbb C^{33},\\
&&  H^{4}(\mo_{\ty}(-2\tx))\cong H^{0}(\mo_{\ty}(\tx))
  \cong H^{0}(\mo_{Y}(X)\otimes I(\Pl))\cong \mathbb C^{27},\\
&&  H^{i}(R^{1}\sigma_{*}(\mo_{\ty}(-2\tx)))=0,\text{ for }i=0,1\\
&&  H^{2}(R^{1}\sigma_{*}(\mo_{\ty}(-2\tx)))\cong
  H^{2}(\mo_{\Pl}(-5))\cong \mathbb C^{6}
\end{eqnarray*}
the Leray spectral sequence implies 
\[H^{i}(\mo_{\ty}(-2\tx))=0, \text{ for }i\le3\]
and consequently
\[H^{i}(\mn^{\vee}_{\tx|\ty})=0\text { for }i\le2.\]
From the exact sequence 
\[0\lra\sigma^{*}\Omega^{1}_{Y}\lra\Omega^{1}_{\ty}\lra\Omega^{1}_{E/\Pl}\lra0\]
we get 
\[\sigma_{*}\Omega^{1}_{\ty}=\Omega^{1}_{Y},\quad
R^{1}\sigma_{*}\Omega^{1}_{\ty}=\mo_{\Pl}\]
and so
\[H^{1}\Omega^{1}_{\ty}\cong\mathbb C^{2}.\]
Similarly, the exact sequence
\[0\lra\sigma^{*}(\Omega^{1}_{Y}(-X))\otimes \mo_{\ty}(E)
\lra\Omega^{1}_{\ty}(-\tx)\lra\Omega^{1}_{E/\Pl}(-1)
\otimes\sigma^{*}\mo_{Y}(-X)\lra0\] 
implies 
\[\sigma_{*}\Omega^{1}_{\ty}(-\tx)\cong\Omega^{1}_{Y}(-X)
\text{ and }R^{1}\sigma_{*}\Omega^{1}_{\ty}(-\tx)\cong\mn_{\Pl|Y}
\otimes\mo_{Y}(-X).\]
Twisting the exact sequence 
\[0\lra\mn_{\Pl|Y}\lra\mn_{\Pl|\PP^{7}}\lra\mn_{Y|\PP^{7}}|\Pl\lra0\]
with $\mo_{Y}(-X)\cong\mo_{Y}(-2)$ we get
\[H^{0}\mn_{\Pl|Y}\otimes\mo_{Y}(-X)=H^{0}\mn_{\Pl|Y}
\otimes\mo_{Y}(-X)=0\qquad\text{and}\qquad 
H^{1}\mn_{\Pl|Y}\otimes\mo_{Y}(-X).\]
Since $H^{3}(\Omega^{1}_{Y}(-X))\cong H^{1}(\mathcal T_{Y})=36$, while 
$H^{3}(\Omega^{1}_{\ty}(-\tx))\cong H^{1}(\mathcal T_{\ty})=33$
the Leray spectral sequence yields
\[H^{i}\Omega^{1}_{\ty}(-\tx)=0, \text { for }i=0,1,2.\]
From the exact sequence 
\[0\lra \Omega^{1}_{\ty}(-\tx)\lra \Omega^{1}_{\ty}\lra
\Omega^{1}_{\ty}\otimes\mo_{\tx}\lra0\] 
we conclude
\[H^{1}(\Omega^{1}_{\ty}\otimes
\mo_{\tx})\cong H^{1}\Omega^{1}_{\ty}\cong \mathbb C^{2}.\]

Finally, the exact sequence
\[0\lra\mn^{\vee}_{\tx|\ty}\lra\Omega^{1}_{\ty}\otimes
\mo_{\tx}\lra\Omega^{1}_{\tx}\lra0\]
yields 
\[H^{1}\Omega^{1}_{\tx}\cong H^{1}(\Omega^{1}_{\ty}\otimes
\mo_{\tx})\cong \mathbb C^{2}.\]
The standard computation with help of \cite[Example~3.2.12]{FultonInters} yields that 
 the Euler number $e(\tx)=-108$ (see also \cite[Prop.~1.14]{michalek}), so we can compute $h^{1,2}(\tx)$.
\end{proof}

As another consequence of [\exactlytensingularities]  we obtain the following simple observation.
\begin{rem}  \label{rem-noquadricsofrankfour}
The web $\WWW$ contains no rank-$4$ quadrics.
\end{rem}
\begin{proof}
Suppose that $Q_0 \in \WWW$ is a rank-$4$ quadric. Then it is a cone through the  $3$-space 
$\sing(Q_0)$ 
over a smooth quadric in $\PP_3$. The latter contains no planes, so 
the $3$-space $\sing(Q_0)$ and the plane $\Pl$ meet.
On the other hand,
since each point in $\sing(Q_0) \cap Q_1 \cap Q_2  \cap Q_3$ is a singularity of $\XX$,
the assumption
[\exactlytensingularities] implies that  $\sing(Q_0)$ meets $\Pl$ in exactly one point   
$P \in \sing(\XX)$. Moreover, we have 
$\sing(Q_0) \cap Q_1 \cap Q_2 \cap Q_3 = \{ P \}$. \\
 Lemma~\ref{lem-quadrics}.b yields 
that the quadrics $Q_1$, $Q_2$, $Q_3$ are smooth in $P$.
By B\'ezout the intersection multiplicity of $\sing(Q_0)$, $Q_1$, $Q_2$, $Q_3$ 
in the point $P$ is $8$. The latter exceeds the product of multiplicities of the varieties
in question in the point $P$. From \cite[Thm~6.3]{draper} we obtain the inequality:  
\begin{equation} \label{eq-1wektor}
\dim(\sing(Q_0) \cap \mbox{T}_{P}Q_1 \cap \mbox{T}_{P}Q_2 \cap \mbox{T}_{P}Q_3) \geq 1 \, .
\end{equation}
To complete the proof, suppose that $\sing(Q_0)$ is the zero set of the 
coordinates $x_0$, $x_1$, $x_6$, $x_7$. Recall that  $\Pl$ is given by vanishing of $x_0$, \ldots, $x_4$, so we have  
$P = (0: \ldots:1:0:0)$ and only $12$ entries in the matrix $\mq_0$ do not vanish.

The point $P$ is a node on $\XX$, so $\dim(\mbox{T}_{P}Q_1 \cap \mbox{T}_{P}Q_2 \cap \mbox{T}_{P}Q_3)= 4$.
Consider the affine chart $x_5 = 1$. The inequality~\eqref{eq-1wektor} implies that there exists a nonzero 
$v := (0, 0, v_2, v_3, v_4, 0 , 0)$ in the $4$-dimensional intersection of the tangent spaces. 
Furthermore, all quadrics in question contain $\Pl$, so the $4$-space 
contains the vectors $(0, \ldots, 0, 1, 0)$ and  $(0, \ldots, 0, 1)$.    
Consequently, a parametrization of 
$\mbox{T}_{P}Q_1 \cap \mbox{T}_{P}Q_2 \cap \mbox{T}_{P}Q_3$ is given by the map
$$
(\lambda_1, \ldots, \lambda_4) \mapsto \lambda_1 v + \lambda_2 w + \lambda_3  (0, \ldots, 1, 0) +  \lambda_3  (0, \ldots, 1),
$$  
where $w := (w_0, \ldots, w_4, 0, 0)$.

Finally, direct computation shows that  
intersection of the tangent cones  $\mbox{C}_{P}Q_0$, $\mbox{T}_{P}Q_1$, $\mbox{T}_{P}Q_2$, $\mbox{T}_{P}Q_3$
consists of two planes. The latter is impossible because we assumed  the point $P$ to be  a node of $\XX$. Contradiction.
\end{proof}

\section{Projection from the  plane} \label{projectionfromtheplane}

Here we maintain the notation of the previous section. Moreover,
we assume that  [\exactlytensingularities] holds and

\vspace*{2ex}
\noindent
{\bf [\zanurzeniebordygi]:} {\sl no $4$  singular points of $\XX$  lie on a line.}

\vspace*{2ex}

In view of Lemma~\ref{lem-quadrics}.a it seems natural to ask whether the assumption 
 [\exactlytensingularities] implies  [\zanurzeniebordygi].
The example below shows that this is not the case. 
\begin{examp}
  Consider the following $8\times8$ symmetric matrices
{\small
\[\mq_{0} :=\left[
\begin{array}{*{8}{r}}
0 & -4 & 4 & 0 & 1 & -2 & 0 & 1 \\
-4 & 4 & 4 & 3 & -3 & 2 & 2 & -2 \\
4 & 4 & 4 & 1 & -1 & 0 & -1 & 0 \\
0 & 3 & 1 & -2 & -1 & -2 & -1 & 2 \\
1 & -3 & -1 & -1 & 2 & 0 & 0 & 0 \\
-2 & 2 & 0 & -2 & 0 & 0 & 0 & 0 \\
0 & 2 & -1 & -1 & 0 & 0 & 0 & 0 \\
1 & -2 & 0 & 2 & 0 & 0 & 0 & 0
\end{array}
\right]
\quad \mq_{1} :=\left[
\begin{array}{*{8}{r}}
-2 & 2 & -1 & -3 & 0 & 0 & 0 & -2 \\
2 & 0 & -4 & 1 & 1 & 4 & -3 & 2 \\
-1 & -4 & 2 & 3 & 1 & 1 & 0 & -1 \\
-3 & 1 & 3 & -2 & -3 & 1 & -3 & 1 \\
0 & 1 & 1 & -3 & 2 & -2 & 0 & 0 \\
0 & 4 & 1 & 1 & -2 & 0 & 0 & 0 \\
0 & -3 & 0 & -3 & 0 & 0 & 0 & 0 \\
-2 & 2 & -1 & 1 & 0 & 0 & 0 & 0
\end{array}\right]
\]
\[
\mq_{2} :=\left[
\begin{array}{*{8}{r}}
-4 & 1 & 1 & 1 & -2 & -1 & -1 & -1 \\
1 & 4 & -1 & -1 & -3 & -3 & 0 & 1 \\
1 & -1 & 2 & -4 & 0 & 2 & 2 & 1 \\
1 & -1 & -4 & 2 & -1 & -1 & 1 & 1 \\
-2 & -3 & 0 & -1 & -4 & -2 & 0 & 0 \\
-1 & -3 & 2 & -1 & -2 & 0 & 0 & 0 \\
-1 & 0 & 2 & 1 & 0 & 0 & 0 & 0 \\
-1 & 1 & 1 & 1 & 0 & 0 & 0 & 0
\end{array}
\right]\quad
\mq_{3} :=
\left[
\begin{array}{*{8}{r}}
-4 & -1 & -4 & 3 & -1 & \phantom{-}4 & 1 & 0 \\
-1 & 4 & -4 & -3 & 0 & 3 & -1 & 0 \\
-4 & -4 & 0 & 1 & 0 & 1 & 1 & 1 \\
3 & -3 & 1 & 2 & 2 & 1 & 0 & -2 \\
-1 & 0 & 0 & 2 & 4 & 3 & 0 & 0 \\
4 & 3 & 1 & 1 & 3 & 0 & 0 & 0 \\
1 & -1 & 1 & 0 & 0 & 0 & 0 & 0 \\
0 & 0 & 1 & -2 & 0 & 0 & 0 & 0
\end{array}\right]
\]}
By direct computation with help of \cite{GPS01}, the intersection  in $\PP_7$ of
the quadrics defined by the above matrices has $10$ isolated
singularities on the  plane $\Pl$ and is smooth elsewhere. In the same way one checks that  
$4$ singular points of the intersection in question lie on the line $(0:\ldots:0:x_6:x_7)$ and  are
given by the equation 
$$
19 x_6^4 + 102 x_6^3x_7 + 189 x_6^2x_7^2 + 137 x_6x_7^3 + 27 x_7^4 = 0.
$$ 
\end{examp}

In this section we study the projection
$
\XX \setminus \Pl  \ni (x_0 : \ldots: x_7) \mapsto (x_0 : \ldots: x_4) \in \PP_4
$
from the plane $\Pi$. Observe that the map in question lifts to a regular map 
\begin{equation} \label{eq-podniesionaprojekcja}
\pi:\tx\lra \PP_{4}
\end{equation}
given by the linear system 
$|H-\ts|$, where $H$ is the pullback of a hyperplane section under the blow-up  $\sigma: \tx \rightarrow \XX$, and $\ts$
stands for the strict transform of $\Pi$. 

\begin{lemm} \label{lem-intersectionnumbersX5}
We have the following intersection numbers: 
  \begin{eqnarray*}
    &&H^{3}=16,\\
    &&H^{2} \cdot \ts=1,\\
    &&H \cdot \ts^{2}=-3,\\
    &&\ts^{3}=-1,\\
    &&(H-\ts)^{3}=5.
  \end{eqnarray*}
\end{lemm}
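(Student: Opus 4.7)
The plan is to compute $H^{3}$ directly from the degree of $\XX$ and to extract the remaining four numbers from the geometry of the surface $\ts$ combined with adjunction on the Calabi--Yau threefold $\tx$.

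First, since $\XX$ is the complete intersection of four quadrics in $\PP_{7}$ and $H$ is the pullback of a hyperplane section, $H^{3}=\deg\XX=2^{4}=16$.

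Next I would identify $\ts$ explicitly. The map $\sigma|_{\ts}\colon\ts\to\Pl\cong\PP_{2}$ is proper and birational; by the paragraph preceding the statement it contracts the ten disjoint smooth rational curves $E_{1},\dots,E_{10}$ to the ten nodes of $\XX$ lying on $\Pl$, and is otherwise an isomorphism. A local computation in the standard model $\{xy=zw\}$ of a node --- in the chart of the small resolution where $\tx$ is given by $\{y=tw\}$ and $\ts$ is cut out on $\tx$ by the single equation $x=0$ --- shows that $\ts$ is smooth along each $E_{i}$. Castelnuovo's contraction criterion then identifies $\sigma|_{\ts}$ with the simultaneous blow-up of $\PP_{2}$ at the ten nodes, so
$$
\mathrm{Pic}(\ts)=\ZZ\tilde h\oplus\bigoplus_{i=1}^{10}\ZZ e_{i},
$$
with $\tilde h:=(\sigma|_{\ts})^{*}\mathcal O_{\PP_{2}}(1)$, $e_{i}:=E_{i}$, intersection form $\tilde h^{2}=1$, $\tilde h\cdot e_{i}=0$, $e_{i}\cdot e_{j}=-\delta_{ij}$, and canonical class $K_{\ts}=-3\tilde h+\sum_{i}e_{i}$. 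Since $H|_{\ts}=\tilde h$ this immediately gives $H^{2}\cdot\ts=\tilde h^{2}=1$.

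By Lemma~\ref{lem-hn} the threefold $\tx$ is Calabi--Yau, so $K_{\tx}=0$ and adjunction yields $\mathcal N_{\ts/\tx}=K_{\ts}=-3\tilde h+\sum_{i}e_{i}$. Computing in $\ts$ then gives
$$
H\cdot\ts^{2}=\tilde h\cdot\Bigl(-3\tilde h+\sum_{i=1}^{10} e_{i}\Bigr)=-3,\qquad \ts^{3}=\Bigl(-3\tilde h+\sum_{i=1}^{10}e_{i}\Bigr)^{2}=9-10=-1,
$$
and the final identity $(H-\ts)^{3}=H^{3}-3H^{2}\ts+3H\ts^{2}-\ts^{3}=16-3-9+1=5$ is formal.

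The main obstacle is the identification of $\ts$ with $\mathrm{Bl}_{10}\PP_{2}$: once smoothness of $\ts$ along the $E_{i}$ has been established by the local computation at a node, Castelnuovo together with adjunction take care of the rest.
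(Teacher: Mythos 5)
Your proposal is correct and follows essentially the same route as the paper: both arguments rest on identifying $\ts|_{\ts}$ with $K_{\ts}=-3l+\sum_{1}^{10}E_{i}$ via adjunction on the Calabi--Yau threefold $\tx$ and on the fact that $\ts$ is $\Pl$ blown up at the ten nodes, then computing the products inside $\ts$ and finishing with the binomial expansion. The only difference is that you supply the local verification that $\ts$ is smooth along the $E_{i}$, which the paper takes as already established when it introduces $\sigma$ and the curves $E_{1},\dots,E_{10}$.
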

\begin{proof}
  The first two statements are obvious. 
The intersection number
  $H \cdot \ts^{2}$ equals the intersection number in $\ts$ of the
  restrictions  $H|_\ts$, $\ts|_\ts$. Since $\ts$ is a blow--up of the
  plane $\Pl$ in 10 points, the restriction $H|_\ts$ is the pullback $l$
  of a line in $\Pl$. Moreover, $\ts|_\ts$ is the normal bundle of $\ts$ in
  the Calabi--Yau manifold $\tx$. Hence it is the canonical divisor
  $K_{\ts}=-3l+\sum_{1}^{10}E_{i},$
where $E_{1}, \ldots, E_{10}$ are the 10  exceptional curves (see \eqref{eq-tenlines}).
Finally, we have  
$$
H \cdot \ts^{2}=(l \cdot (-3l + \sum_{1}^{10} E_{i}))_{\ts} = -3.
$$
Similarly, $\ts^{3}=((-3l + \sum_{1}^{10} E_{i})^{2})_{\ts}=9-10=-1$. The last statement follows from 
  Newton's formula. 
\end{proof}

To simplify our notation we put 
$\ux :=  (x_0 : \ldots :x_4)\in\mathbb P^{4}$
and define the following matrices :
\begin{equation} \label{eq-matrices}
\maca(\ux):=\left[
\begin{array}[c]{cccc}
  \macl_0 \ux   &        \macl_1 \ux   &    \macl_2 \ux   &    \macl_3 \ux      \\
  \macm_0 \ux   &        \macm_1 \ux   &    \macm_2 \ux   &    \macm_3 \ux      \\
  \macn_0 \ux   &        \macn_1 \ux   &    \macn_2 \ux   &    \macn_3 \ux   \\
\end{array}
\right] , 
\quad
\AAA(\ux):=
 \left[
\begin{array}{cc}
\ux^{T} \underline{\mq}_{0} \ux &   \\
\ux^{T} \underline{\mq}_{1} \ux &   \\
\ux^{T} \underline{\mq}_{2} \ux &   \\
\ux^{T} \underline{\mq}_{3} \ux &  
\end{array}
\begin{array}{c}
   \\
\raisebox{3ex}{${\maca(\ux)}^{T}$}   \\
\end{array}
\begin{array}{c}
    \\
   \\
    \\
  \end{array}
\right] \, .
\end{equation}

Observe that the following equality holds
(cf. \cite[p.~30]{alzatirusso})
\begin{equation} \label{eq-relacjaAB}
\maca(\ux)y=\macB(y)\ux.
\end{equation}

Let $\underline Q_i$ be the quadratic form associated to the matrix $\underline{\mq_i}$ and 
let  $\mc_i$ denote the cubic given by the degree-3 minor of the matrix $\maca(\ux)$ obtained by deleting its 
$i$-th column,  e.g. 
\[
\mc_0 :=   \det \left[
\begin{array}[c]{ccc}
    \macl_1 \ux   &    \macl_2 \ux   &    \macl_3 \ux      \\
    \macm_1 \ux   &    \macm_2 \ux   &    \macm_3 \ux      \\
    \macn_1 \ux   &    \macn_2 \ux   &    \macn_3 \ux    \\
\end{array}
\right].
\]

\begin{lemm} \label{lem-equationofquintic}
a) The image of $\tx$ under $\pi$ is the quintic $X_5$ given by the equation
\begin{equation}   \label{eq-quintic}
\det(\AAA(\ux)) = 
\mc_0 \cdot \underline{Q}_0 - 
\mc_1  \cdot \underline{Q}_1 +
\mc_2  \cdot \underline{Q}_2 -
\mc_3  \cdot \underline{Q}_3
 = 0 \, .
\end{equation}

\noindent
b)  The image of $\ts$ under $\pi$ is the (smooth) Bordiga sextic $\mb \subset \PP_4$
 given by vanishing of the cubics $\mc_0, \ldots, \mc_4$ (i.e. all   $3\times3$  
minors of the matrix $\maca(\ux)$). Moreover, the map $\pi|_{\ts}: \ts \rightarrow \mb$ is 
an isomorphism.
\end{lemm}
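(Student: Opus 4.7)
Part (b) is the more fundamental half, so I would prove it first. From the proof of Lemma~\ref{lem-intersectionnumbersX5}, the surface $\ts$ is isomorphic to the blow-up of $\Pl$ at the ten projected singular points of $\XX$, and $(H-\ts)|_{\ts}$ coincides with the linear system $|4l-\sum_{i=1}^{10}E_i|$ of plane quartics through these ten points. Under assumption~[\zanurzeniebordygi] no four of the ten points are collinear, which is exactly the classical condition ensuring that this system is basepoint-free of projective dimension $4$ and realizes the Bordiga embedding of $\ts$ as a smooth sextic surface in $\PP_4$ (cf.~\cite{todd}, \cite{alzatirusso}); by Hilbert--Burch its ideal is then generated by the maximal minors of a $3\times 4$ matrix of linear forms.

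To identify that matrix with $\maca(\ux)$ I would compute $\pi|_{\ts}$ via tangent directions. Pick a curve $\gamma(t)\subset\XX$ with $\gamma(0)=(0,\vec{x})\in\Pl$ and $\ux$-tangent vector $\vec{u}=(x_0'(0),\ldots,x_4'(0))\neq 0$. Differentiating $Q_i(\gamma(t))=0$ at $t=0$ and using the block shape of $\mq_i$ together with $Q_i|_{\Pl}\equiv 0$ yields $\vec{x}^{T}\macB_i\vec{u}=0$ for $i=0,\ldots,3$, which is precisely $\vec{x}^{T}\maca(\vec{u})=0$. Hence
$$\pi(\ts)=\{\ux\in\PP_4\,:\,\rank(\maca(\ux))\le 2\}=\{\mc_0=\cdots=\mc_3=0\}=\mb,$$
and combining with the Bordiga embedding above produces the isomorphism $\pi|_{\ts}\colon\ts\stackrel{\sim}{\lra}\mb$.

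For part (a), Laplace expansion of $\det\AAA(\ux)$ along its first column immediately yields the identity $\mc_0\underline{Q}_0-\mc_1\underline{Q}_1+\mc_2\underline{Q}_2-\mc_3\underline{Q}_3$. For the containment $\pi(\tx)\subseteq\{\det\AAA=0\}$, the block decomposition of $\mq_i$ gives $Q_i(x)=\underline{Q}_i(\ux)+2(x_5\macl_i\ux+x_6\macm_i\ux+x_7\macn_i\ux)$, so vanishing of all four $Q_i$ at a point with $\ux\neq 0$ translates to $\AAA(\ux)\cdot(1,2x_5,2x_6,2x_7)^{T}=0$, forcing $\det\AAA(\ux)=0$. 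Since $\mb$ has dimension $2$ by part (b), its complement is dense in the quintic, and there $\rank(\maca(\ux))=3$, so the affine-linear system in $(x_5,x_6,x_7)$ has a unique solution and $\deg(\pi)=1$. Combined with $(H-\ts)^3=5$ from Lemma~\ref{lem-intersectionnumbersX5}, the image of $\pi$ is an irreducible threefold of degree $5$ in $\PP_4$, hence the quintic hypersurface $\{\det\AAA=0\}$ itself.

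The main obstacle I expect is matching the abstract Hilbert--Burch matrix of $\mb$ with the explicit $\maca(\ux)$: the Hilbert--Burch theorem determines such a matrix only up to row and column operations and a change of basis of the linear forms, so the tangent-direction computation is what pins down $\maca(\ux)$ itself as the correct presentation and in particular confirms that the five coordinates $x_0,\ldots,x_4$ restricted to $\ts$ do span the linear system $|4l-\sum E_i|$.
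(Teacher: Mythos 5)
Your overall route coincides with the paper's: part (a) is in substance the paper's argument (the four residual planes in $\operatorname{span}\{\ux,\Pl\}$ meet iff the $4\times4$ linear system $\AAA(\ux)\cdot(1,2x_5,2x_6,2x_7)^T=0$ is degenerate, plus Laplace expansion; your added degree count via $(H-\ts)^3=5$ is a harmless reinforcement), and part (b) likewise rests on the very ampleness of $|4l-\sum_1^{10}E_i|$ via \cite[Lemma~2.9.1]{braun}. Your tangent-direction computation identifying $\pi(\ts)$ with $\{\rank\maca(\ux)\le 2\}$ is a clean substitute for the paper's ``double plane $2\Pl$'' characterization and correctly resolves the Hilbert--Burch ambiguity you raise at the end.

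There is, however, one genuine gap: the assertion that [\zanurzeniebordygi] (no four of the ten points collinear) is ``exactly the classical condition'' for $|4l-\sum E_i|$ to give the Bordiga embedding is false. One must also exclude a conic through eight of the points (its strict transform has degree $8-8=0$ and would be contracted) and a cubic through all ten (its strict transform is an elliptic curve of degree $2$, which cannot be embedded in $\PP_4$), and neither exclusion follows from [\zanurzeniebordygi]. The paper obtains both from Lemma~\ref{lem-quadrics}.a: the ideal of $\sing(\XX)\cap\Pl$ is generated by the quartic minors of $\macC$ and hence contains no cubic, which rules out a cubic through all ten points and, by multiplying a putative conic through eight of them with the line through the remaining two, also rules out such a conic. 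Your proof needs this input from [\exactlytensingularities] (via Lemma~\ref{lem-quadrics}.a); as written it would fail for ten points satisfying [\zanurzeniebordygi] but lying on a cubic. The fix is immediate once the missing hypotheses of Braun's lemma are stated and checked as above.
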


\begin{proof}
Obviously,
the restriction of the quadric  $\sum_0^3 \alpha_i Q_i$
to the $3$-space
$$
\operatorname{span}\{\ux,\Pi\} = \{ (\mu_0
x_0:\ldots:\mu_0 x_3:\mu_0 x_4:\mu_1:\mu_2:\mu_3) \, | \, 
(\mu_0:\mu_1:\mu_2:\mu_3) \in \PP_3 \} 
$$
is given by the polynomial
\begin{equation} \label{eq-restriction}
(\sum_0^3 \alpha_i \ux^{T}\underline{\mq}_{i}\ux)\mu_0^2 
+2(\sum_0^3\alpha_i (\macl_i\ux))\mu_0 \mu_1 +2(\sum_0^3\alpha_i (\macm_i\ux))\mu_0 \mu_2 +2(\sum_0^3\alpha_i (\macn_i\ux))\mu_0 \mu_3  .
\end{equation}

\noindent
a) Observe that
$\ux \in \mathbb P_{4} \setminus \pi(\ts)$ lies in the image of $\XX$ under the projection from $\Pl$ iff
the planes residual to $\Pl$ in the intersections of the quadrics $Q_i$
with the $3$-space $\operatorname{span}\{\ux,\Pl\}$ 
intersect. By \eqref{eq-restriction}, the latter is equivalent to the vanishing
$\det(\AAA(\ux)) = 0$. 
Laplace formula completes the proof.

\noindent
b) From \eqref{eq-restriction} we obtain that the condition 
$$
\sum_0^3\alpha_i (\macl_i\ux)=\sum_0^3\alpha_i (\macm_i\ux) = \sum_0^3\alpha_i (\macn_i\ux) = 0
$$
is satisfied  iff the restriction  $(\sum_0^3 \alpha_i Q_i)|_{\operatorname{span}\{\ux,\Pi\}}$ is the double plane $2\Pi$.
The latter holds precisely when $\ux$ lies in the image of $\Pi$ under the projection in question. 

It is well known that, for a generic  $4\times3$ matrix whose entries are linear forms in five variables,
 the surface given by the vanishing of $3\times3$ minors is  $\PP_2$ blown-up in 10 points (see e.g. \cite{alzatirusso}).
Still, it is not always the case (see e.g. \cite{todd}). To see that our surface is indeed the (smooth) Bordiga sextic, observe
that 
 the linear system $|H-\ts|$ restricts on $\ts$ to the
complete linear system $|4l-\sum_{i=1}^{i=10} E_{i}|$.
 We apply  \cite[Lemma~2.9.1]{braun}
 to show that the system in question embeds $\ts$ into $\PP_{4}$
as the (smooth) Bordiga sextic.
By Lemma~\ref{lem-quadrics}.a no cubic contains all singularities of $\XX$. Suppose 
that $8$ singularities of $\XX$ lie on a conic. Then its product with the line through the remaining two singular points is a cubic containing   $\sing(\XX)$.
Consequently the existence of such a conic is ruled out by Lemma~\ref{lem-quadrics}.a.
 Finally no $4$ singularities lie on a line by the assumption [\zanurzeniebordygi].
\end{proof}

\begin{rem} \label{rem-rankdwa}
a) Observe that, since the (scheme--theoretic) intersection $\mb$ of the zeroes of the degree-$3$ minors of the matrix $\maca(\ux)$
is smooth, we have
$$
\mbox{rank}(\maca(\ux)) = 2 \quad \mbox{ for every } \ux \in \mb. 
$$

\noindent
b) The rational curves  $E_{1},\dots,E_{10}\subset \tx$ are mapped by $\pi$
to lines in $\PP_{4}$ contained in the Bordiga sextic. Indeed,  we have
$(H-\ts) \cdot E_{j}=((4l-\sum E_{i}) \cdot E_{j})_{\ts}=1$  for
$j=1,\dots,10$.\\
Geometrically,
points on such a line
 $\subset \mb$ 
correspond to the $3$-spaces in the $4$-space $\mbox{T}_P\XX$, where
$P$ is a  node of $\XX$, that contain the plane $\Pl$. 

\end{rem}
We introduce the following notation:
$$
U := \tx \setminus (\ts \cup \bigcup_{V \operatorname{ linear,  } V \subset \XX, V \cap \Pl \neq \emptyset}  \sigma^{-1}(V)) \, .
$$

\begin{lemm} \label{lem-X5isnormal}
Suppose that  [\exactlytensingularities], [\zanurzeniebordygi] hold. \\
a) The map $\pi|_U$ is an isomorphism onto the image  and we have the equality
$ \pi(U) = (X_5 \setminus \mb).$ \\
b) The inclusion $\sing(X_5) \subsetneq  \mb$ holds. In particular,
the quintic $X_5$ is normal. 
\end{lemm}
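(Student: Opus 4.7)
For (a), we plan, for each $\ux\in X_5\setminus\mb$, to exhibit a unique lift to $\tx$ via matrix analysis. By Remark~\ref{rem-rankdwa}(a) combined with the fact that $\maca(\ux)$ is $3\times 4$, its rank equals $3$ on the complement of $\mb$; hence the last three columns of $\AAA(\ux)$ are linearly independent and, together with $\det\AAA(\ux)=0$, this forces $\rank\AAA(\ux)=3$ and a one-dimensional kernel. If the kernel direction $(\mu_0:\mu_1:\mu_2:\mu_3)$ had $\mu_0=0$, then $(\mu_1,\mu_2,\mu_3)^T$ would lie in the trivial kernel of the injective map $\maca(\ux)^T\colon\CC^3\to\CC^4$, hence $\mu_0\neq0$. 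By the analysis of formula \eqref{eq-restriction} in the proof of Lemma~\ref{lem-equationofquintic}(a), the point $P(\ux):=(\ux:\mu_1/\mu_0:\mu_2/\mu_0:\mu_3/\mu_0)$ is the unique point of $\XX\setminus\Pl$ above $\ux$; since $\pi(\ts)=\mb$ is disjoint from $X_5\setminus\mb$, the full fiber $\pi^{-1}(\ux)\subset\tx$ equals $\{P(\ux)\}$.

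Next we establish the two inclusions $\pi(U)\subset X_5\setminus\mb$ and $P(\ux)\in U$. If $P(\ux)$ lay on some linear subvariety $V\subset\XX$ meeting $\Pl$, then picking any $p\in V\cap\Pl$ the line $\overline{pP(\ux)}\subset V\subset\XX$ would lie in $\operatorname{span}(\ux,\Pl)$ but not in $\Pl$, so the residual intersection over $\ux$ would contain a line, forcing $\rank\maca(\ux)\le 2$ and hence $\ux\in\mb$---a contradiction. Conversely, for $P\in U$ with $\pi(P)=\ux\in\mb$, two cases arise: if $\rank\AAA(\ux)=3$, the unique kernel direction has $\mu_0=0$ (because $(\underline{Q}_i(\ux))_{i=0,\dots,3}$ is then not in the column span of $\maca(\ux)^T$), so no point of $\XX\setminus\Pl$ lies over $\ux$, contradicting the existence of $P$; if $\rank\AAA(\ux)\le 2$, the positive-dimensional residual, together with $P\notin\Pl$, yields a line in $\XX$ through $P$ meeting $\Pl$, and $P$ then lies in $\sigma^{-1}$ of that line, contradicting $P\in U$. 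To upgrade the resulting bijection $\pi|_U\colon U\to X_5\setminus\mb$ to an isomorphism, we use Cramer's rule: the one-dimensional kernel of the rank-$3$ matrix $\AAA(\ux)$ is spanned by the vector of signed $3\times 3$ minors of any of its rows, which are polynomial in $\ux$ and do not vanish simultaneously on $X_5\setminus\mb$; combining these with the identity on the first five coordinates produces an explicit morphism $X_5\setminus\mb\to\tx$ inverse to $\pi|_U$.

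Part (b) is comparatively short. The inclusion $\sing(X_5)\subset\mb$ is immediate from (a), since $\tx$ is smooth and $\pi|_U$ is an isomorphism onto $X_5\setminus\mb$. For the strict inclusion we use a degree argument: if $\mb$ were contained in $\sing(X_5)$, then, since $\mb$ is smooth and $\det\AAA(\ux)$ vanishes along $\mb$ together with all its first-order partial derivatives, one would have $\det\AAA\in I(\mb)^2$. But by Lemma~\ref{lem-equationofquintic}(b) the homogeneous ideal $I(\mb)$ is generated by the cubics $\mc_0,\dots,\mc_3$, so every nonzero homogeneous element of $I(\mb)^2$ has degree at least $6$---contradicting the fact that $\det\AAA$ is a quintic. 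Hence $\sing(X_5)$ is a proper closed subset of the irreducible smooth surface $\mb$, so $\dim\sing(X_5)\le 1$, i.e.\ $\codim_{X_5}\sing(X_5)\ge 2$; since any hypersurface in $\PP_4$ satisfies Serre's condition $S_2$, the criterion $R_1+S_2$ then yields the normality of $X_5$. The main delicate point of the whole argument is the rank-stratification case analysis for $P\in U$ with $\pi(P)\in\mb$, which is precisely what forces one to remove the preimages of linear subvarieties of $\XX$ meeting $\Pl$ in the definition of $U$.
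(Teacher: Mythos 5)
Your proposal is correct, but for part (a) it takes a genuinely different route from the paper. The paper argues synthetically: for $P\in U$ the $3$-space $\operatorname{span}(\sigma(P),\Pl)$ meets $\XX$ exactly in $\Pl\cup\{\sigma(P)\}$ (because $\XX$ is cut out by quadrics), which yields injectivity of $\pi|_U$ and of its differential, and the inclusions $\pi(\tx\setminus U)=\mb$ and $\pi(U)\subset X_5\setminus\mb$ are obtained by producing lines of $\XX$ through pairs of points lying in a common fibre. You instead read everything off the linear algebra of $\AAA(\ux)$: the fibre of the projection over $\ux$ is $\PP(\ker\AAA(\ux))\cap\{\mu_0\neq0\}$, the rank stratification ($\rank\maca(\ux)=3$ off $\mb$ versus $\rank\maca(\ux)=2$ on $\mb$, together with the position of the column $(\underline{Q}_i(\ux))_i$) decides whether this fibre is a single point, empty, or positive-dimensional, and the adjugate of $\AAA(\ux)$ gives an explicit inverse morphism. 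Your version buys an explicit formula for $(\pi|_U)^{-1}$ and in particular sidesteps the delicate point, left implicit in the paper, that a bijective morphism with everywhere-injective differential need not be an isomorphism onto a possibly non-normal image; the paper's version is shorter and avoids the case analysis. Two cosmetic remarks: the rank-$3$ statement off $\mb$ follows directly from the definition of $\mb$ as the common zero locus of the $3\times3$ minors rather than from Remark~\ref{rem-rankdwa}; and with the paper's normalisation of the matrices $\mq_i$ the kernel vector produces the point $(\ux:\mu_1/(2\mu_0):\mu_2/(2\mu_0):\mu_3/(2\mu_0))$ --- the factor $2$ is immaterial. Part (b) of your argument is essentially identical to the paper's, including the $\det\AAA\in I(\mb)^2$ degree count and the $R_1+S_2$ conclusion.
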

\begin{proof}
a) Fix $P \in U$. Then $\sigma(P) \notin \Pl$.  Since $X_{16}$ is an intersection of quadrics
 we have the equality
$$
 \mbox{span}(\sigma(P), \Pl) \cap X_{16} = \Pl \cup \{ \sigma(P) \},  \mbox{ where }  \sigma(P) \notin \Pl
$$
which implies that $\pi|_U$ is injective and the linear map  $d_{P}\pi$ is an isomorphism.

We claim that 
\begin{equation*} \label{eq-inclusion}
\pi (\tx \setminus U) = \mb.
\end{equation*}
Let $V \subset X_{16}$,  $V \nsubseteq \Pl$ be a linear subspace such that $V \cap \Pl \neq \emptyset$.
Let $\sigma(P_1) \in (V \setminus \Pl)$ and  let $\sigma(P_2) \in (V \cap \Pl)$. By definition of $\pi$ 
all points from  $\mbox{span}(\sigma(P_1), \sigma(P_2))  \setminus \{\sigma(P_2)\}$ lie in one fiber of $\pi$. On the other hand,
the proper transform of the line  $\mbox{span}(\sigma(P_1), \sigma(P_2))$ under $\sigma$ meets $S$.
Since $\pi$ maps that proper transform of the line in question to one point and
$\pi(P_2) \in \mb$
 we have $\pi(P_1) \in \mb$, and 
we obtain  the claim.

It remains to  show the inclusion  
\begin{equation*} \label{eq-inclusion2}
 \pi(U) \subset  (X_5 \setminus \mb).
\end{equation*}
Suppose that 
 $\pi(P_3) = \pi(P_4)$, where $P_3\in \tx \setminus U$ and $P_4 \in U$. If $\sigma(P_3) \in \reg(\XX)$, then
the line $\mbox{span}(\sigma(P_3),\sigma(P_4))$ is tangent to $\XX$ in $\sigma(P_3)$ and meets it in   $\sigma(P_4)$.
In particular, it is contained in each quadric of the system  $\WWW$, so  
$\mbox{span}(\sigma(P_3),\sigma(P_4)) \subset  \XX$ and  $P_4 \notin U$. Contradiction. \\
Similar argument yields contradiction when  $\sigma(P_3) \in \sing(\XX)$.

\noindent
b) By [\exactlytensingularities] and part a) we know that  $\sing(X_5) \subset  \mb$. Suppose that 
 $\sing(X_5) = \mb$. Since $\mb$ is smooth, Lemma~\ref{lem-equationofquintic}.a implies that
$\det(\AAA(\ux)) \in \mbox{I}(\mb)^2$. The latter is impossible because the ideal  $\mbox{I}(\mb)$ is generated by the cubics 
$\mc_0,  \mc_1,  \mc_2, \mc_3$. \\
Finally $X_5$ is a $3$-dimensional hypersurface with at most $1$-dimensional singularities, so it is normal.
\end{proof}
After those preparations we can study  higher-dimensional fibers of $\pi$.
\begin{lemm} \label{lem-gluedpoints}
a) The map $\pi$ has no two-dimensional fibers and its only one-dimensional fibers are
proper transforms of  lines on $\XX$ that meet $\Pl$ but are not contained in $\Pl$. 

\noindent
b)  The following equality  holds
\begin{equation} \label{eq-equationofsingularities}
\sing(X_5) := \{ \ux \in \mb \, : \, \rank(\AAA(\ux)) \leq 2  \} \, .
\end{equation}

\noindent
c) The map $\pi$ has only finitely many 
one--dimensional fibers.
 \end{lemm}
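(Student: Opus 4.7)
The plan is to derive all three parts from a careful analysis of the rank function $r(\ux) := \operatorname{rank}(\AAA(\ux))$. For $V := \operatorname{span}(\ux, \Pl)$, equation~\eqref{eq-restriction} shows that the fiber $\pi^{-1}(\ux) \setminus \ts$, viewed in $X_{16} \setminus \Pl$, is contained in $\bigcap_{i=0}^{3} \Pi_i$, where the residual planes $\Pi_i \subset V$ are cut out by linear forms whose coefficient vectors form the rows of $\AAA(\ux)$. Hence the projective dimension of $\bigcap_i \Pi_i$ is $3 - r(\ux)$ when $r(\ux) \leq 3$, and it is empty otherwise. The key preliminary bound is $r(\ux) \geq 2$ on all of $X_5$: the last three columns of $\AAA(\ux)$ coincide with $\maca(\ux)^T$, which has rank $3$ on $X_5 \setminus \mb$ and rank exactly $2$ on $\mb$ by Remark~\ref{rem-rankdwa}.a.

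For part (a), this bound forces $\bigcap_i \Pi_i$ to have projective dimension at most $1$, so the fibers of $\pi$ outside $\ts$ are at most $1$-dimensional, and the fibers on $\ts$ are $0$-dimensional by Lemma~\ref{lem-equationofquintic}.b. Thus no $2$-dimensional fibers exist. A $1$-dimensional fiber occurs precisely when $r(\ux) = 2$, in which case $\bigcap_i \Pi_i$ is a line $L \subset V$; a short linear algebra computation shows that the condition $L \subset \Pl$ is equivalent to $r(\AAA(\ux)) > \operatorname{rank}(\maca(\ux))$, which fails since both equal $2$. Therefore $L$ is a line in $X_{16}$ meeting $\Pl$ at a single point and not contained in $\Pl$, and its proper transform in $\tx$ is the $1$-dimensional fiber; conversely, any such line clearly produces such a fiber.

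For part (b), the inclusion $\{r \leq 2\} \subset \sing(X_5)$ is automatic: $r(\ux) \leq 2$ gives $\operatorname{adj}(\AAA(\ux)) = 0$, whence every partial derivative $\partial_{x_j}\det\AAA$ vanishes at $\ux$ via $\partial_{x_j}\det\AAA = \operatorname{trace}(\operatorname{adj}(\AAA) \cdot \partial_{x_j}\AAA)$. For the reverse, by Lemma~\ref{lem-X5isnormal}.b we may restrict to $\ux \in \mb$ and show that $r(\ux) = 3$ implies $\ux \in \operatorname{reg}(X_5)$. From $\det\AAA = \sum_i (-1)^i \underline{Q}_i \mc_i$ and $\mc_i(\ux) = 0$ on $\mb$ we obtain $\nabla\det\AAA(\ux) = \sum_i (-1)^i \underline{Q}_i(\ux)\, \nabla\mc_i(\ux)$. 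The Laplace identity $\sum_i (-1)^i \maca_{j,i}(\ux)\, \mc_i(\ux) \equiv 0$ for $j = 0, 1, 2$ (derived from the vanishing determinant of the $4 \times 4$ matrix obtained by placing the $j$-th row of $\maca$ above $\maca$), when differentiated and evaluated on $\mb$, produces two independent linear relations among $\nabla\mc_0(\ux), \ldots, \nabla\mc_3(\ux)$ — two because $\operatorname{rank}(\maca) = 2$ on $\mb$. The smoothness of $\mb$ established in Lemma~\ref{lem-equationofquintic}.b forces the four gradients to span precisely the $2$-dimensional conormal space of $\mb$ at $\ux$, so these are the only relations among them. Consequently $\nabla\det\AAA(\ux) = 0$ iff $(\underline{Q}_i(\ux))_i$ lies in the row span of $\maca(\ux)$, iff $r(\ux) \leq 2$, completing the contrapositive.

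For part (c), parts (a) and (b) identify the set of points of $\PP_4$ above which the fiber of $\pi$ is $1$-dimensional with $\sing(X_5) = \{\ux \in \mb : r(\ux) \leq 2\}$, which by Lemma~\ref{lem-X5isnormal}.b is a proper closed subset of the smooth Bordiga surface $\mb$ and thus of dimension at most $1$. The main obstacle is ruling out a $1$-dimensional component. Such a component would yield a $1$-parameter family of lines in $X_{16}$ meeting $\Pl$, sweeping a ruled surface $\Sigma \subset X_{16}$. The intended contradiction comes from deformation theory of lines on the complete intersection of four quadrics: the short exact sequence $0 \to N_{\ell/X_{16}} \to \mathcal{O}_\ell(1)^{\oplus 6} \to \mathcal{O}_\ell(2)^{\oplus 4} \to 0$ gives $\deg N_{\ell/X_{16}} = -2$, so a generic line has $N_{\ell/X_{16}} \simeq \mathcal{O}_\ell(-1)^{\oplus 2}$ and is rigid; a positive-dimensional family of lines through $\Pl$ would force $N_{\ell/X_{16}} \simeq \mathcal{O}_\ell \oplus \mathcal{O}_\ell(-2)$ at every member, and a local analysis at the meeting point $\ell \cap \Pl$ — separately for smooth points of $X_{16}$ on $\Pl$ and for the ten nodes — then contradicts either the smoothness structure enforced by [A1] or the non-collinearity [A2]. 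This last step is the only part of the argument requiring input beyond the rank calculus developed above.
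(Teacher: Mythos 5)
Your parts (a) and (b) are correct but follow a genuinely different route from the paper's. For (a), the paper rules out two--dimensional fibers by combining Zariski's Main Theorem (connectedness of fibers) with a Picard--number argument excluding a plane in $\XX$ meeting $\Pl$ in one point; your observation that $\rank(\AAA(\ux))\ge\rank(\maca(\ux))\ge 2$ everywhere (rank $3$ off $\mb$, rank exactly $2$ on $\mb$ by Remark~\ref{rem-rankdwa}.a), so that the residual linear space $\bigcap_i\Pi_i$ has dimension at most one, is more direct and yields the description of the one--dimensional fibers in the same stroke; the criterion $L\subset\Pl\Leftrightarrow\rank(\AAA(\ux))>\rank(\maca(\ux))$ checks out. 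For (b), the paper characterizes the $\ux\in\mb$ over which a line of $\XX$ is contracted and implicitly equates that locus with $\sing(X_5)$; your adjugate argument for one inclusion and the differentiated Laplace identities $\sum_i(-1)^i\maca_{j,i}\mc_i\equiv0$ for the other give a clean, purely algebraic alternative, correctly using that the scheme cut out by the $\mc_i$ is smooth of codimension two so that the gradients $\nabla\mc_i$ span exactly the conormal space of $\mb$.

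Part (c) contains a genuine gap. Since [\exactlyfortysixsingularities] is only imposed in the next section, you are right that finiteness of $\{\ux\in\mb:\rank\AAA(\ux)\le2\}$ must be proved, and the reduction to ``no one--parameter family of contracted lines'' is the correct reformulation; but the decisive step is only asserted. You compute $\deg N_{\ell/\XX}=-2$ and claim a ``local analysis at $\ell\cap\Pl$'' contradicts [\exactlytensingularities] or [\zanurzeniebordygi]; that analysis is not carried out, and I do not see how it could be completed as stated: a family of lines with normal bundle $\mathcal O_\ell\oplus\mathcal O_\ell(-2)$ all meeting a fixed plane is not locally obstructed at the incidence point, and members of the family may pass through nodes of $\XX$, where $N_{\ell/\XX}$ need not be locally free and your exact sequence fails. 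The obstruction is global, and this is exactly what the paper exploits: an infinite family of contracted lines sweeps out a surface $G\subset\tx$, and since $h^{1,1}(\tx)=2$ by Lemma~\ref{lem-hn} one may write $G\sim_{\operatorname{num}}d_HH+d_S\ts$; the intersection numbers $G\cdot l=-2$, $\ts\cdot l=1$, $H\cdot l=1$, $H\cdot E_i=0$, $\ts\cdot E_i=-1$, $G\cdot E_i=\nu\in\ZZ$ force $G\sim_{\operatorname{num}}(\nu-2)H-\nu \ts$, and then $(H-\ts)^2\cdot G=5\nu-22=0$ gives the non--integral $\nu=22/5$. You should replace your deformation--theoretic sketch by this (or some other genuinely global) argument.
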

\begin{proof}
a) As we have already shown in the proof of Lemma~\ref{lem-X5isnormal}
the proper transform of each line on $\XX$ that meets $\Pl$ but is not contained in $\Pl$ lies in a fiber of $\pi$.

The regular map $\pi$ is birational and its image is normal, so we can apply Zariski's Main Theorem
 \cite[Thm~5.2]{hs} to see that the map
 $\pi$ has connected fibers. 
Moreover, by Lemma~\ref{lem-equationofquintic}.b 
\begin{equation} \label{eq-onepointonplane}
\mbox{ each fiber of } \pi \mbox{ meets the surface } S  \mbox{ in at most one point.}
\end{equation}
Let $F$ be a fiber of $\pi$ such that $\dim(F) \geq 1$. Let $P_{1},P_{2} \in (F \setminus S)$. Then the $3$-spaces
 $\mbox{span}(\sigma(P_1), \Pi)$,   $\mbox{span}(\sigma(P_2), \Pi)$ coincide, so the
 line  $\mbox{span}(\sigma(P_1), \sigma(P_2))$
meets the plane $\Pi$. Obviously, the intersection point does not coincide
 with $P_1$, $P_2$. Since $\XX$ is intersection of quadrics,
we have $\mbox{span}(\sigma(P_1), \sigma(P_2)) \subset \XX$, which implies that
\begin{equation*} \label{eq-lineinfiber}
\mbox{span}(\sigma(P_1), \sigma(P_2)) \subset \sigma(F)  \, .  
\end{equation*}

Suppose that the fiber $F$ contains a point $P_3 \notin S$ such that $\sigma(P_3) \notin \mbox{span}(\sigma(P_1), \sigma(P_2))$.
Then, arguing as in \eqref{eq-lineinfiber}, we show that $\mbox{span}(\sigma(P_1), \sigma(P_3))$ is a line contained in
$\sigma(F)$ and meeting the plane $\Pi$. But, \eqref{eq-onepointonplane} implies that
 the proper transforms (under the blow-up $\sigma$)
of two lines meeting $\Pi$ in different points 
cannot lie in the same fiber of $\pi$. Consequently, by \eqref{eq-onepointonplane}, 
the image $\sigma(F)$ is a plane in 
$\XX$ that intersects $\Pl$ in precisely one point. Observe that the
planes $\sigma(F)$, $\Pi$ meet in a singularity of $\XX$. Let $H$ be the pullback of a 
hyperplane section under the blow-up $\sigma$ and let $\widetilde{\sigma(F)}$ denote the proper transform  
of  $\sigma(F)$. If we put   $\tilde{l}$ (resp.  $\tilde{m}$) to denote the proper transform of 
a line in $\sigma(F)$ (resp. in $\Pi$)  that  runs through no singularities of $\XX$, then
we obtain the following table of
 intersection numbers.
\renewcommand\arraystretch{1.2}
\begin{equation*} \label{eq-mp1}
\begin{array}{c|c|c|c}
           & \widetilde{\sigma(F)} & S   & H        \\  \hline  
\tilde{l}  &     -3             & 0   &  1              \\  \hline  
\tilde{m}  &      0             & -3  &  1               \\  \hline  
H^2  &      1             &  1  &  16   
\end{array}
\end{equation*}
The resulting matrix has non-zero determinant, so Picard number of $\tx$ is at least $3$,
which is impossible by Lemma~\ref{lem-hn}. This contradiction shows that the fiber $F$ coincides 
with the proper transform of the line 
$\mbox{span}(\sigma(P_1), \sigma(P_2))$.

\noindent
b) As in the proof of Lemma~\ref{lem-equationofquintic}, we see that  the line through
 the points $(\ux,x_{5},x_{6},x_{7})$ and
$(0,x_{5}',x_{6}',x_{7}')$ is contained in $\XX$ iff for any $\lambda \in \CC$ and $i = 0, \ldots, 3$ we have
\[\ux^{T}\underline{\mq}_{i}\ux + 2(\macl_{i}\ux, \macm_{i}\ux, \macn_{i}\ux)(x_{5},x_{6},x_{7})^{T}   +2\lambda
(\macl_{i}\ux, \macm_{i}\ux, \macn_{i}\ux)(x_{5}',x_{6}',x_{7}')^{T}=0.
\]

Fix $\ux \in \mb$.
From Remark~\ref{rem-rankdwa}.a we know that  $\rank(\maca(\ux)) = 2$.
Consequently, there exist points
$(x_{5},x_{6},x_{7})$ and $(x_{5}',x_{6}',x_{7}')$ such that the line
spanned by $(\ux,x_{5},x_{6},x_{7})$ and 
$(0,x_{5}',x_{6}',x_{7}')$ is contained in $\XX$ if and only if
$\rank(\AAA(\ux)) = 2$. 

\noindent
c) Assume to the contrary that the map $\pi$ contracts infinitely many 
lines. Then there is a ruled surface $ G \subset \tilde X_{16}$ such that
the fibers of $G$ are contracted by $\pi$. Let $l$ (resp. $E_i$)
be the class of a (general) fiber of $G$, (resp. of an exceptional curve of the blow-up $\sigma$). 
We have the following intersection numbers
\begin{equation} \label{table-intnumfinite}
\begin{array}{c|c|c|c}
  &S&G&H\\\hline
l&1&-2&1\\\hline
E_{i}&-1&\nu&0
\end{array}
 \end{equation}
The above table yields immediately that $H$ and $S$ are linearly independent in 
$\mbox{Pic}(\tilde X_{16}) \otimes \QQ$.
Since $h^{1,1}(\tilde X_{16})=2$, we can find $d_H, d_S \in \QQ$ such that
$G \sim_{\operatorname{num}} d_H H +  d_S S$. From \eqref{table-intnumfinite}
we obtain  
$$
G \sim_{\operatorname{num}} (\nu - 2)  H -  \nu S.
$$
Therefore Lemma~\ref{lem-intersectionnumbersX5} yields the equality
\[(H-S)^2 \cdot G= 5 \nu - 22.\]
As the divisor $G$ is contracted by $\pi$ we conclude that
 $\nu=\frac{22}{5}$, which is impossible by  \eqref{table-intnumfinite}. 
\end{proof}

In particular, Lemma~\ref{lem-gluedpoints} implies  that the map $\pi:\tx\lra X_{5}$ is 
a resolution of singularities of the quintic  $X_{5}$.
As $\pi$ contracts only finitely many curves (i.e. the singular locus
of $X_5$ is zero-dimensional), 
it  is in fact a small resolution that introduces exactly one copy of
$\PP_1$ over each singularity.

The lemma below gives a simple
criterion when the quintic $X_5$ is nodal. 

\begin{lemm} \label{nodesofquintic} All singularities of 
 the quintic $X_{5}$ are  nodes iff the set $\sing(X_5)$ consists of $46$ points. 
\end{lemm}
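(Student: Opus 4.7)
The strategy is to compare the cardinality of $\sing(X_5)$ with the length of the scheme-theoretic degeneracy locus
\[
D_2 := \{\ux \in \PP_4 \, : \, \rank(\AAA(\ux)) \leq 2 \},
\]
cut out by the $3 \times 3$ minors of $\AAA$. By Lemma~\ref{lem-gluedpoints}.b, $D_2$ agrees set-theoretically with $\sing(X_5)$, and every point $P$ of a $0$-dimensional scheme satisfies $\mu_P \geq 1$, with equality iff the local ring is reduced of multiplicity one. The plan is thus to show that (i) $\operatorname{length}(D_2) = 46$, and (ii) for every $P \in \sing(X_5)$ one has $\mu_P(D_2) = 1$ if and only if $P$ is a node of $X_5$.

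\emph{Step (i): length of $D_2$.} Under either direction of the equivalence, $\sing(X_5)$ is $0$-dimensional (nodes are isolated, and $46$ points form a $0$-dimensional set), so $D_2$ has the expected codimension $4 = (4-2)^2$ in $\PP_4$. Viewing $\AAA$ as a morphism of vector bundles
\[
\phi : \mo_{\PP_4}(-2) \oplus \mo_{\PP_4}(-1)^{\oplus 3} \lra \mo_{\PP_4}^{\oplus 4},
\]
the Thom-Porteous formula gives $[D_2] = c_2(F-E)^2 - c_1(F-E) \cdot c_3(F-E)$. Computing $c(F-E) = 1/c(E)$ from $c(E) = 1 - 5H + 9H^2 - 7H^3 + 2H^4$ yields $c_1(F-E) = 5H$, $c_2(F-E) = 16H^2$, $c_3(F-E) = 42H^3$, so
\[
[D_2] = (16^2 - 5 \cdot 42)\,H^4 = 46\,H^4,
\]
i.e.\ the scheme $D_2$ has length $46$.

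\emph{Step (ii): local analysis at $P$.} First, if $\rank(\AAA(P)) \leq 1$ then bringing $\AAA(P)$ to standard form shows $\operatorname{mult}_P(\det \AAA) \geq 3$, so $P$ is not a node; hence a nodal $P$ forces $\rank(\AAA(P)) = 2$. In that case, choose analytic bases on source and target so that near $P$
\[
\AAA = \begin{pmatrix} I_2 + U & V \\ W & M \end{pmatrix},
\]
with $U, V, W, M$ vanishing at $P$. Block row/column operations show that the ideal of $3 \times 3$ minors of $\AAA$ is locally generated by the four entries of the Schur complement $\tilde M := M - W(I_2 + U)^{-1} V$, and that $\det \AAA$ equals $\det \tilde M$ times a local unit. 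Hence $\mu_P(D_2) = 1$ iff the differentials $d\tilde M_{ij}(P)$ are linearly independent, in which case these four entries serve as local coordinates and $X_5$ is locally given by $xy - zw = 0$, the nodal equation. Conversely, if these differentials span a subspace of dimension $\leq 3$, the quadratic part of $\det \tilde M$ at $P$ has rank $\leq 3$ and so cannot be the tangent cone of a node.

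Combining (i) and (ii),
\[
\#\sing(X_5) \leq \sum_{P \in \sing(X_5)} \mu_P(D_2) = 46,
\]
with equality iff $\mu_P(D_2) = 1$ for every $P$, iff every singularity of $X_5$ is a node. The principal obstacle is the Schur-complement step in (ii): identifying the node condition with smoothness of the scheme $D_2$ by reducing the $4\times 4$ determinantal singularity to the standard $2 \times 2$ one. Once this local equivalence is in place, the global count follows at once from Thom-Porteous.
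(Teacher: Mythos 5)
Your proof is correct, but it follows a genuinely different route from the paper's. The paper argues topologically: since $\pi:\tx\lra X_5$ is a small resolution replacing each singular point by a single $\PP_1$ (Lemma~\ref{lem-gluedpoints}), one has $e(X_5)=e(\tx)-\#\sing(X_5)=-108-\#\sing(X_5)$, while Dimca's formula gives $e(X_5)=-200+\sum_P\mu(P,X_5)$; as $\mu\ge 1$ with equality exactly for nodes, the two expressions are compatible with all $\mu=1$ precisely when $\#\sing(X_5)=46$. You instead exploit the determinantal structure: Thom--Porteous yields the length $46$ (your Chern class arithmetic checks out: $s(E)=1+5H+16H^2+42H^3+\cdots$ and $16^2-5\cdot 42=46$), and the Schur-complement reduction correctly identifies reducedness of the rank-$\le 2$ scheme at $P$ with nondegeneracy of the Hessian of $\det\AAA$ there. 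Two points you should make explicit: to equate the Thom--Porteous degree with the length of the scheme of $3\times3$ minors you need $D_2$ to have the expected codimension $4$ and to be Cohen--Macaulay --- the former holds because $\sing(X_5)$ is finite (Lemma~\ref{lem-gluedpoints}.c, or from either side of the equivalence), and the latter is Eagon--Northcott, exactly as invoked in the proof of Lemma~\ref{lem-quadrics}.a; also, $\rank(\AAA(\ux))\le 2$ already forces $\ux\in\mb$ (the last three columns of $\AAA$ form $\maca(\ux)^{T}$), so your $D_2\subset\PP_4$ coincides with the set appearing in [A3]. Your approach has the merit of explaining a priori where the number $46$ comes from and of detecting nodality point by point through the scheme structure of the singular locus, while the paper's argument is shorter given that $e(\tx)=-108$ and the small-resolution structure have already been established.
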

\begin{proof}
Let $\mu(\cdot)$ stand for the Milnor number.
 Lemma~\ref{lem-X5isnormal} yields that 
the regular map $\pi:\tx\lra X_{5}$ is birational. 
By Lemma~\ref{lem-gluedpoints}
it contracts only
the lines in $\XX$ that intersect the plane $\Pl$. The contracted lines are
pairewise disjoint,
 so we obtain
\begin{equation*} \label{eq-milnor-1}
-108-\#(\sing(X_{5})) = e(X_{5}) = -200+\sum_{P \in \sing(X_{5})} \mu(P, X_{5}),
\end{equation*}
where the second equality results from \cite[Cor.~5.4.4]{dimca}.
To complete the proof recall that 
the Milnor number of a singularity is $1$ iff the singularity in question is an  A$_1$ point.
\end{proof}

\section{Restriction of the  Bordiga conic bundle} \label{hyperellipticmap}

In this section we maintain the assumptions and notation of the previous one, i.e. we assume that 
[\exactlytensingularities], [\zanurzeniebordygi] hold.
 In particular, 
{\sl the scheme-theoretic intersection of the zeroes of the degree-$3$ minors of the matrix $\maca(\ux)$  is smooth} 
 (see \eqref{eq-matrices})
and {\sl the locus $\{y \in \PP_4 \, : \, \rank(\macB(y)) = 2 \}$ consists of $10$ points}. 
Moreover,  we make the following {\bf assumption}: 

\vspace*{1ex}
\noindent
{\bf [\exactlyfortysixsingularities]:} {\sl the set  $\{ \ux \in \mb \, : \, \rank(\AAA(\ux)) \leq 2  \}$ consists of $46$ points }.

\vspace*{1ex}
One can show (see \cite[Ex.~3 on p.~35]{alzatirusso}) that the rational map 
\begin{equation} \label{eq-bordigarational}
 \PP_4 \setminus \mb \ni \ux \mapsto (\mc_0(\ux):-\mc_1(\ux):\mc_2(\ux):-\mc_3(\ux)) \in \PP_3
\end{equation}  
lifts to a  regular map (so-called Bordiga conic bundle - see  \cite[Ex.~3 on p.~35]{alzatirusso})
$$
\Phi: \mbox{Bl}_{\mb}\PP_4 \rightarrow \PP_3.
$$
that 
is generically a conic-bundle ([ibid., Prop.~2.1]). The map 
$\Phi$ is the projection onto the second factor from the closure of the graph of the rational map defined by \eqref{eq-bordigarational}  (see also \eqref{eq-relacjaAB})
i.e. from the set
\begin{equation} \label{eq-rozdmuchaniebordygi-def}
\{ (\ux,y) \in \PP_4 \times \PP_3 :  \macB(y)\ux = 0 \}.
\end{equation}
By Lemma~\ref{lem-quadrics}.d it  has exactly ten $2$-dimensional
fibers over the points $y \in \PP_3$ such that $\rank(\macB(y)) = 2$. Such a fiber is the plane 
\begin{equation} \label{eq-fiber}
\Phi^{-1}(y) = \{ (\ux,y) :  \macB(y)\ux = 0 \}. 
\end{equation}
Observe that  restrictions of the cubics polynomials $\mc_{i}$ to the plane $\{ \macB(y)\ux = 0 \}$ are
proportional, so
the plane 
cuts $\mb$ along a cubic curve (see also \cite[Ex.~3 on p.~35]{alzatirusso}). \\
The remaining fibers 
$\Phi^{-1}(y)$ are 3-secant lines to $\mb$. They are given by \eqref{eq-fiber} with  $\rank(\macB(y)) = 3$.

In Sect.~\ref{sect-singularities}  we studied the map $\tx \lra X_5$.
By Lemma~\ref{nodesofquintic}
the quintic $X_{5}$ admits another small resolution of singularities
\begin{equation} \label{eq-rozdmuchaniebordygi}
\psi : \ttx \lra X_{5}
\end{equation}
obtained by blowing--up the Bordiga surface $\mb$. 
 The strict
transform $\tts$ of $\mb$ is a plane blown--up in $56$ 
points
(some of the $46$ points that are centers of the second blow-up 
may lie on the exceptional curves of the first blow--up). 
We  put $F_{1},\dots,F_{46}$ to  denote the exceptional  curves of the small resolution
in question. Then,  the two
resolutions differ by flops of the $46$ smooth rational curves  $L_{1},\dots,L_{46}\subset
\tx$ and $F_{1},\dots,F_{46}\subset \ttx$.

The restriction of the conic bundle $\Phi$ induces the regular map
\[\phi:\ttx \lra \PP_3.\]
This regular map is given by  the linear system $|3H_{1}-\tts|$ on
$\ttx$, where $H_{1}$ is pullback of the hyperplane section ${\mathcal O}_{\PP_4}(1)$.
We have the following intersection numbers

\begin{lemm}  \label{lem-intersh1is}
  \begin{eqnarray*}
    H_{1}^{3}=5,\\
    H_{1}^{2}\cdot\tts=6,\\
    H_{1}\cdot\tts^{2}=-2,\\
    \tts^{3}=-47,\\
    (3H_{1}-\tts)^{3}=2.
  \end{eqnarray*}
\end{lemm}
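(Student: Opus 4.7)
The plan is to handle $H_{1}^{3}$ directly from the definition and to reduce the three remaining triple intersection numbers to intersection computations on the smooth surface $\tts$ itself. Since $\psi:\ttx\to X_{5}$ is birational and $H_{1}=\psi^{*}\mo_{X_{5}}(1)$, the projection formula gives $H_{1}^{3}=\deg(X_{5})=5$ (the determinant in \eqref{eq-quintic} has degree $2+1+1+1=5$). For the remaining three products I use the identities $H_{1}^{2}\cdot\tts=(H_{1}|_{\tts})^{2}$, $H_{1}\cdot\tts^{2}=(H_{1}|_{\tts})\cdot(\tts|_{\tts})$ and $\tts^{3}=(\tts|_{\tts})^{2}$, so everything is reduced to identifying the two restricted classes on $\tts$.

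Next I would describe $\tts$ together with both restrictions in a convenient orthogonal basis. As recalled in the excerpt, $\tts$ is $\PP_{2}$ blown up in $10+46=56$ (possibly infinitely near) points, so it carries an orthogonal basis consisting of the hyperplane class $l$ together with the total transforms $e_{1},\dots,e_{10},e'_{1},\dots,e'_{46}$ of the exceptional divisors of the two successive blow-ups, with $l^{2}=1$ and $e_{i}^{2}=(e'_{j})^{2}=-1$. The proof of Lemma~\ref{lem-equationofquintic}.b identifies the Bordiga embedding $\mb\hookrightarrow\PP_{4}$ with the complete linear system $|4l-\sum_{1}^{10}e_{i}|$ on $\ts$, so pulling back the hyperplane class via $\tts\to\mb$ yields $H_{1}|_{\tts}=4l-\sum_{i=1}^{10}e_{i}$. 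Since $\ttx$ is related to the Calabi--Yau threefold $\tx$ by flops of the $46$ curves $L_{1},\dots,L_{46}$, it is again Calabi--Yau, and adjunction gives $\tts|_{\tts}=K_{\tts}=-3l+\sum_{i=1}^{10}e_{i}+\sum_{j=1}^{46}e'_{j}$.

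Plugging these expressions into the three identities will produce the values $16-10=6$, $-12+10=-2$ and $9-10-46=-47$ by orthogonality, and the equality $(3H_{1}-\tts)^{3}=2$ then follows by a trinomial expansion in the already computed quantities. The only genuinely non-routine step I foresee is the identification $\tts|_{\tts}=K_{\tts}$: it rests on the invariance of triviality of the canonical class under flops, which I would either invoke directly or re-derive from the explicit description of $\ttx$ as a small resolution of $X_{5}$ obtained by blowing up $\mb$. Once this point is granted, the rest of the lemma reduces to bookkeeping on blown-up projective planes.
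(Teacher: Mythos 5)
Your proposal is correct and takes essentially the same route as the paper: the same restriction formulas $H_{1}|_{\tts}=4l-\sum_{1}^{10}e_{i}$ and $\tts|_{\tts}=-3l+\sum_{1}^{10}e_{i}+\sum_{1}^{46}F_{j}$ on the $56$-fold blow-up of the plane, followed by the same orthogonal bookkeeping. The only difference is cosmetic: you justify $\tts|_{\tts}=K_{\tts}$ explicitly via adjunction on the Calabi--Yau $\ttx$ (obtained by flopping $\tx$), a step the paper leaves implicit in this lemma but carries out verbatim in the analogous Lemma~\ref{lem-intersectionnumbersX5}.
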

\begin{proof}
  The first two statements follow from the fact that $\deg(X_{5})=5$ and 
$\deg(\mb)=6$. The others can be obtained  from the equalities
\begin{equation} \label{eq-restrykcjedos1}
H_{1}|_{\tts}=4l-\sum_{1}^{10} \psi^*(\pi(E_{i})), \quad 
\tts|_{\tts}=-3l+\sum_{1}^{10}  \psi^*(\pi(E_{i})) + \sum_{1}^{46} F_{j}.
\end{equation}
where $l$ is  the pull-back of ${\mathcal O}_{\PP_{1}}(1)$ under both blow-ups.
Recall (Remark~\ref{rem-rankdwa}.b) that the curves $\pi(E_{1})$,$\ldots$, $\pi(E_{10})$  are lines on $\mb$.
\end{proof}
Since  $\phi$ is surjective, as an immediate consequence of Lemma~\ref{lem-intersh1is} we obtain 
\begin{cor} \label{cor-twotoone}
The mapping $\phi$ is generically 2:1.
\end{cor}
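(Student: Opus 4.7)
The plan is to extract $\deg(\phi)$ directly from the triple self-intersection of the defining linear system. Since $\ttx$ is a smooth projective threefold, $\PP_3$ is a smooth projective threefold, and $\phi$ is already known to be surjective, the morphism $\phi$ has a well-defined generic degree $d = \#(\phi^{-1}(p))$ for $p \in \PP_3$ a general point, and ``generically 2:1'' is precisely the claim $d = 2$.

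To compute $d$, I would choose three general hyperplanes $H_1', H_2', H_3' \subset \PP_3$ which meet transversally in a single reduced point $p \in \PP_3$. Because $\phi$ is defined by the complete (or at least a base-point-free sub-) linear system $|3H_1 - \tts|$ on $\ttx$, the pullbacks $\phi^{*}(H_i')$ are members of $|3H_1 - \tts|$. By Bertini, these three pullback divisors meet transversally along $\phi^{-1}(p)$, which is a reduced 0-cycle of length $d$. Applying the projection formula,
\[
d \,=\, \phi^{*}(H_1') \cdot \phi^{*}(H_2') \cdot \phi^{*}(H_3') \,=\, (3H_1 - \tts)^{3}.
\]
By Lemma~\ref{lem-intersh1is} the right-hand side equals $2$, so $d = 2$ and the corollary follows.

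There is no serious obstacle here: given the intersection number already computed in Lemma~\ref{lem-intersh1is} and the stated surjectivity of $\phi$, the argument is a one-line application of the projection formula. The only technical points to be careful about are (i) invoking Bertini to ensure the general fiber is reduced (so that the intersection number genuinely counts the cardinality of the fiber) and (ii) noting that $\phi$ is a morphism to $\PP_3$ rather than a rational map, so that pullbacks of hyperplanes and the projection formula apply without modification; both are immediate from the construction of $\phi$ via blowing up $\mb$ and restricting the Bordiga conic bundle.
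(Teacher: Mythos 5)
Your argument is correct and is essentially the paper's own proof: the authors likewise deduce the corollary immediately from the surjectivity of $\phi$ together with the computation $(3H_{1}-\tts)^{3}=2$ in Lemma~\ref{lem-intersh1is}, the point being that for a generically finite surjective morphism to $\PP_{3}$ given by the system $|3H_{1}-\tts|$ the generic degree equals this triple self-intersection. Your extra remarks on reducedness of the general fiber (automatic in characteristic zero, since a dominant generically finite morphism is generically \'etale) and on $\phi$ being a morphism rather than a rational map are fine but not needed beyond what the paper already records.
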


In order to obtain a precise description of fibers of $\phi$ we will
need the following lemma (compare \cite{michalek}): 

\begin{lemm} \label{lem-genericfiberofphi}
  A point $z \in \ttx$ is mapped by $\phi$ to $y\in\PP_{3}$ iff the
  3-space $\operatorname{span}((\psi(z):0:0:0),\Pl)$ is contained in the quadric
  $Q(y):=\sum_{i}y_{i}Q_{i}$. 
\end{lemm}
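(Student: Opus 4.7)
The plan is to translate both sides into a single matrix equation, $\AAA(\psi(z))^{T}y=0$, with $\AAA$ as in~\eqref{eq-matrices}, and then to compare the two formulations. First I would unwind the containment condition. Substituting $\alpha_{i}=y_{i}$ into~\eqref{eq-restriction} from the proof of Lemma~\ref{lem-equationofquintic}, the restriction of $Q(y)=\sum_{i}y_{i}Q_{i}$ to the $3$-space $\operatorname{span}((\ux:0:0:0),\Pl)$, parametrised by $(\mu_{0}:\mu_{1}:\mu_{2}:\mu_{3})$, is the quadratic form
\[
\bigl(\ux^{T}\underline{\mq}(y)\ux\bigr)\mu_{0}^{2} \;+\; 2\sum_{j=1}^{3}\bigl(\macB(y)\ux\bigr)_{j}\mu_{0}\mu_{j},
\]
whose four coefficients are, up to the factor $2$, precisely the components of $\AAA(\ux)^{T}y$. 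Hence the span condition is equivalent to $\AAA(\ux)^{T}y=0$.

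Next I would use the incidence--variety description: by~\eqref{eq-rozdmuchaniebordygi-def}, $\mbox{Bl}_{\mb}\PP_{4}=\{(\ux,y)\in\PP_{4}\times\PP_{3}:\macB(y)\ux=0\}$ with $\Phi$ the second projection, and by~\eqref{eq-rozdmuchaniebordygi} $\ttx$ is the strict transform of $X_{5}$ under this blow-up. Consequently each $z\in\ttx$ corresponds to a pair $(\ux,y)=(\psi(z),\phi(z))$ satisfying $\macB(y)\ux=0$.

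For the forward direction, if $\phi(z)=y$ then $\macB(y)\ux=0$ already recovers the last three components of $\AAA(\ux)^{T}y$. Over the dense open locus $\ux\in X_{5}\setminus\mb$, $\psi$ is an isomorphism, $\rank(\maca(\ux))=3$ by Remark~\ref{rem-rankdwa}.a, and the unique element of $\ker\maca(\ux)$ (up to scalar) is $y^{*}(\ux):=(\mc_{0}(\ux):-\mc_{1}(\ux):\mc_{2}(\ux):-\mc_{3}(\ux))$, obtained by cofactor expansion of $\maca(\ux)$. Combined with Lemma~\ref{lem-equationofquintic}.a,
\[
\ux^{T}\underline{\mq}(y^{*})\ux \;=\; \sum_{i=0}^{3}(-1)^{i}\mc_{i}(\ux)\underline{Q}_{i}(\ux) \;=\; \det\AAA(\ux) \;=\; 0,
\]
since $\ux\in X_{5}$. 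Because $\ttx$ is the Zariski closure in $\PP_{4}\times\PP_{3}$ of the graph of $y^{*}$ over $X_{5}\setminus\mb$, the closed condition $\AAA(\ux)^{T}y=0$ spreads to the whole of $\ttx$.

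For the converse, assume $\AAA(\psi(z))^{T}y=0$; then $\macB(y)\ux=0$ places $(\ux,y)\in\mbox{Bl}_{\mb}\PP_{4}$. If $\ux\in X_{5}\setminus\mb$, $\ker\maca(\ux)$ is one-dimensional and hence $y=y^{*}(\ux)=\phi(z)$. If $\ux\in\mb\setminus\sing(X_{5})$, assumption~[\exactlyfortysixsingularities] together with~\eqref{eq-equationofsingularities} gives $\rank(\AAA(\ux))=3$, so $\ker\AAA(\ux)^{T}$ is still one-dimensional and has to coincide with $\{\phi(z)\}$. The main obstacle is the case $\ux\in\sing(X_{5})$, where $\ker\AAA(\ux)^{T}$ jumps to dimension~$2$ and the fibre $\psi^{-1}(\ux)=F_{j}$ is a contracted rational curve: one then has to verify that $\phi$ restricts to a bijection $F_{j}\to\PP(\ker\AAA(\ux)^{T})$, which I expect to follow from irreducibility of $\ttx$ combined with a dimension count for the strict transform $\tts\subset\ttx$ of $\mb$.
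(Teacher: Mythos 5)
Your reduction of the containment to the single matrix equation $\AAA(\psi(z))^{T}y=0$ is exactly the paper's identity \eqref{eq-ciaglepowtarzana}, and your forward direction (the cofactor kernel $y^{*}=(\mc_{0}:-\mc_{1}:\mc_{2}:-\mc_{3})$ over the dense locus $X_{5}\setminus\mb$, the vanishing $\det\AAA(\ux)=0$ from Lemma~\ref{lem-equationofquintic}.a, then extension by closure) is the same argument the paper runs on $\ttx\setminus\tts$. One small miscitation: $\rank(\maca(\ux))=3$ off $\mb$ follows from the definition of $\mb$ as the common zero locus of the $3\times3$ minors, not from Remark~\ref{rem-rankdwa}.a, which asserts rank $2$ \emph{on} $\mb$. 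Your treatment of the converse for $\psi(z)\notin\sing(X_{5})$ (one-dimensional kernel of $\AAA(\ux)^{T}$ plus the forward inclusion) is fine and is in fact more explicit than what the paper writes.

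The residual case you flag, $\psi(z)\in\sing(X_{5})$, is a genuine problem, but it cannot be closed the way you propose. The bijection $\phi|_{F_{j}}:F_{j}\to\PP(\ker\AAA(\ux)^{T})$ is indeed true: for $\ux\in\sing(X_{5})$ one has $\rank(\AAA(\ux))\le2$ while the three rows of $\maca(\ux)$ already span a two-dimensional space, so the first row of $\AAA(\ux)^{T}$ lies in their span and $\ker\AAA(\ux)^{T}=\ker\maca(\ux)$; moreover $F_{j}=\{\ux\}\times\PP(\ker\maca(\ux))$ and $\phi$ is the second projection. But precisely because the condition $\AAA(\psi(z))^{T}y=0$ depends only on $\psi(z)$, it is constant along $F_{j}$, whereas $\phi|_{F_{j}}$ is injective: for a fixed $z\in F_{j}$ and any $y\in\phi(F_{j})$ with $y\neq\phi(z)$, the containment $\operatorname{span}((\psi(z):0:0:0),\Pl)\subset Q(y)$ holds while $\phi(z)\neq y$. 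So the ``if'' direction of the lemma, read literally, fails on the curves $F_{j}$, and no irreducibility or dimension count will rescue it. You are in good company: the paper's own proof of ($\Leftarrow$) has the same blind spot --- it passes from $\macB(y)\psi(z)=0$ to $\phi(z)=y$ by citing \eqref{eq-fiber}, which tacitly identifies $z$ with the pair $(\psi(z),y)$, i.e.\ assumes $\psi$ is injective over $\psi(z)$. The defensible statement, and the one your argument actually proves, is the forward implication for all $z$ together with the equivalence for $z\notin\bigcup_{j}F_{j}$; this is also all that the paper's later applications of the lemma can legitimately use.
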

\begin{proof}  Observe that for any
  $x = (\ux:x_5:x_6:x_7)  \in \operatorname{span}((\ux:0:0:0),\Pl) $ we have 
\begin{equation} \label{eq-ciaglepowtarzana}
x^{T}\mq(y)x=\ux^{T}\underline{\mq}(y)\ux +2(x_{5},x_{6},x_{7})\macB(y)\ux
\end{equation}
\noindent
($\Leftarrow$): Put $\ux = \psi(z)$ in  \eqref{eq-ciaglepowtarzana} to  obtain 
$$
\psi(z)^{T}\underline{\mq}(y)\psi(z) =  -2(x_{5},x_{6},x_{7})\macB(y)\psi(z) \quad \mbox{ for all } x_5, x_6, x_7 \in \CC.
$$
The latter implies  $\macB(y)\psi(z)=0$ and (see \eqref{eq-fiber}) the equality $\phi(z) = y$.

\noindent
($\Rightarrow$):
Suppose that $z \in \ttx \setminus \tts$.
From $\phi(z)=y$ we get  $\macB(y)\psi(z)= 0$. By \eqref{eq-ciaglepowtarzana} we have
$$
x^{T}\mq(y)x=\psi(z)^{T}\underline{\mq}(y)\psi(z) \quad \mbox{ for all } x = (\psi(z):x_5:x_6:x_7)
\in \operatorname{span}(\psi(z),\Pl).
$$
But (see \eqref{eq-bordigarational}), we can assume that  $y = (\mc_0(\psi(z)):\ldots: -\mc_3(\psi(z)))$. Therefore,
Lemma~\ref{lem-equationofquintic}.a yields the equalities 
$\psi(z)^{T}\underline{\mq}(y)\psi(z) = \det(\AAA(\psi(z))) = 0$.
In this way we have shown the inclusion 
$$
\{ (\ux,y) \in \ttx  :  \macB(y)\ux = 0 \} \subset \{ (\ux,y) \in  \PP_4 \times \PP_3  :  \operatorname{span}((\ux:0:0:0),\Pl) \subset Q(y) \} \, , 
$$
which completes the proof.
\end{proof}

\newcommand{\hatl}{\hat{l}}
\newcommand{\hatE}{\hat{E}}
Recall, that we have the map $(\psi \circ (\pi|_{S})^{-1} \circ \sigma):  S_1  \rightarrow  \mb \backsimeq S \rightarrow  \Pl$.
In the lemma below we put $\hatl$ (resp. $\hatE_1$, $\ldots$, $\hatE_{10}$) 
to denote the pullback of  ${\mathcal O}_{\Pl}(1)$ (resp. of the exceptional divisors \eqref{eq-tenlines})
to $S_1$.

\newcommand{\planecubic}{C}   
\newcommand{\contractedcurve}{D}
\newcommand{\lineonbordiga}{l}

\begin{lemm} \label{lem-nocontractedcurvesonbordiga}
 An irreducible curve $\contractedcurve \subset \tts$ is contracted by $\phi$ iff (up to a relabelling of the divisors
$\hatE_1, \ldots, \hatE_{10}$ and $F_{1}, \ldots,F_{46}$)
it belongs to one of the following linear systems
\begin{itemize}
\item [a)]  $|\hatE_{1}-F_{1}-F_{2}-F_{3}-F_{4}|$,
\item [b)]  $|\hatl-\hatE_{1}-\hatE_{2}-\hatE_{3}-F_{1}-F_{2}-F_{3}|$,
\item [c)]  $|2\hatl-\hatE_{1}-\ldots-\hatE_{7}-F_{1}-F_{2}|$,
\item [d)]  $|3\hatl-2\hatE_{1}-\hatE_{2}-\ldots-\hatE_{9}-F_{1}-\ldots-F_{5}|$. 
\end{itemize} 
In the cases (a)--(c) the curve in question is the proper transform  of a line in $\mb$, whereas the case (d) corresponds
to a conic in the intersection of $\mb$ with the plane $\{\macB(y)\ux = 0\}$,  where $\rank(\macB(y))=2$. 
In particular, if the intersection $\mb \cap  \{\macB(y)\ux = 0\}$ is an irreducible cubic, then its proper transform 
is not contracted by $\phi$.
\end{lemm}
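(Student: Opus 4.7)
The plan is to reduce the statement to a numerical intersection computation on $\tts$. Since $\phi$ is given by the linear system $|3H_1-\tts|$, an irreducible curve $D\subset\tts$ is contracted by $\phi$ iff $D\cdot(3H_1-\tts)|_{\tts}=0$. Combining the formulas \eqref{eq-restrykcjedos1} I would first compute
\[
(3H_1-\tts)|_{\tts} \;=\; 3\bigl(4\hatl-\sum_{i=1}^{10}\hatE_i\bigr) - \bigl(-3\hatl+\sum_{i=1}^{10}\hatE_i+\sum_{j=1}^{46} F_j\bigr) \;=\; 15\hatl - 4\sum_{i=1}^{10}\hatE_i - \sum_{j=1}^{46} F_j,
\]
recalling that in the orthogonal basis the self-intersections are $\hatl^2=1$ and $\hatE_i^2=F_j^2=-1$.

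I would then classify irreducible curves in the $56$-fold blow-up $\tts$ of $\Pl$ into three types and evaluate the intersection for each. An exceptional $F_j$ has intersection $1$ and is never contracted. The strict transform of $\hatE_i$ has class $\hatE_i-\sum_{j\in J_i}F_j$, where $J_i$ indexes the second-blow-up centers lying on $\hatE_i$; its intersection is $4-|J_i|$, which vanishes precisely when $|J_i|=4$, giving case~(a). Otherwise $D$ is the strict transform of an irreducible plane curve of degree $d\ge 1$ with non-negative multiplicities $m_i,n_j$, and the contraction condition reads $\sum n_j = 15d - 4\sum m_i\ge 0$.

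To identify which triples $(d,\{m_i\},\{n_j\})$ admit an irreducible effective representative I would exploit the factorization of $\phi$ through the Bordiga conic bundle $\Phi$: the image $\overline{C}\subset\mb$ of $D$ must lie in a fiber of $\Phi$. Generic fibers of $\Phi$ are $3$-secant lines to $\mb$ and contain no positive-dimensional component of $\mb$; the only special fibers with curve components are the ten plane cubics $\mb\cap\Pi_y$ for $y$ with $\rank(\macB(y))=2$. Hence $\overline{C}$ is either a non-exceptional line on $\mb$, i.e.\ of class $\hatl-\hatE_{i_1}-\hatE_{i_2}-\hatE_{i_3}$ or $2\hatl-\sum_{k=1}^{7}\hatE_{i_k}$, yielding cases~(b), (c) after solving the contraction equation (for $\sum n_j=3$ and $\sum n_j=2$ respectively); or a rational conic component of a reducible special cubic, whose image class on $\mb$ must then be $3\hatl-2\hatE_1-\hatE_2-\cdots-\hatE_9$ and forces $\sum n_j=5$, which is case~(d).

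The principal obstacle is to rule out the other numerically possible $(d,\sum m_i)$-pairs using hypotheses [\exactlytensingularities]--[\exactlyfortysixsingularities]: for small $\sum m_i$, the equation $\sum n_j=15d-4\sum m_i$ would force $7$ or more of the $46$ second-blow-up centers to lie on a single line or conic through few Bordiga centers, which the genericity of the $46$ centers rules out; for larger $\sum m_i$ a straightforward arithmetic-genus computation shows that no irreducible plane curve with the required multiplicities exists. Finally, for an irreducible cubic $\mb\cap\Pi_y$ one verifies directly that its lifted class in $\tts$ matches none of (a)--(d), so its proper transform is not contracted by $\phi$.
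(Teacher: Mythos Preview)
Your overall strategy --- computing $(3H_1-\tts)|_{\tts}=15\hatl-4\sum\hatE_i-\sum F_j$ and classifying curves via the fiber structure of the Bordiga conic bundle $\Phi$ --- is exactly the paper's. The genuine gap is your elimination of the ``other numerically possible'' cases: you invoke \emph{genericity of the $46$ centers}, but no such genericity is available. The $46$ points are the singular points of $X_5$, completely determined by the web $\WWW$; assumptions [\exactlytensingularities]--[\exactlyfortysixsingularities] say nothing about their position relative to lines or conics on $\mb$. The paper's substitute for genericity is the observation that each $\psi(F_j)$, being a singular point of $X_5$, is also a singular point of the planar quintic curve $X_5 \cap \{\macB(y)\ux=0\}$ whenever it lies in that plane. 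Since that quintic decomposes as the cubic $\mb\cap\{\macB(y)\ux=0\}$ plus a residual conic, its singular points are forced to lie on intersections of components, and a B\'ezout count then bounds how many $F_j$ can meet any fixed component of the cubic.

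Your list of possible components is also incomplete. Besides the rational conic $3\hatl-2\hatE_1-\hatE_2-\cdots-\hatE_9$, the planar cubic $\mb\cap\Phi^{-1}(y)$ can contain conics of class $\hatl-\hatE_1-\hatE_2$ or $2\hatl-\hatE_1-\cdots-\hatE_6$ (arising from line $+$ conic decompositions in $\Pl$), which would need $7$ resp.\ $6$ of the $F_j$ to be contracted; it is precisely these that the planar-quintic argument above eliminates. Likewise, for the lines on $\mb$ (the $\rank(\macB(y))=3$ case) you assert the classification of line classes without proof: the paper combines the genus formula with the degree condition $4d-\sum m_i=1$ to obtain the Diophantine system $\sum m_i^2=d^2+d+1$, $\sum m_i=4d-1$, $\sum n_j=4-d$. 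This system has a spurious solution $d=3$, $(m_i)=(2,1^9)$, which is ruled out not by any genericity but by Lemma~\ref{lem-quadrics}(a) (the ten nodes lie on no plane cubic).
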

\begin{proof} 
Recall that  $\phi = \Phi|_{\ttx}$  and the fibers of $\Phi$ are lines and planes given by 
\eqref{eq-fiber}.

Before we prove the claim, we study two-dimensional fibers of $\Phi$.
Let $\sing(X_{16}) = \{P_1, \ldots, P_{10} \}$. By \eqref{eq-rankonlythree} 
for each singularity $P_i$ there exists a  unique point 
$y^{(i)} \in \PP_{3}$ such that $\macC(P_{i})y^{(i)}=0$. 
Then, by \eqref{eq-useful}, we have $\rank (\macB(y^{(i)}))=2$.  \\
Lemma~\ref{lem-quadrics}.a  yields that for each $i \in \{1, \ldots, 10\}$ there is a unique degree-three 
curve $C_{i} \subset \Pi$ such that  
$P_{j}\in C_{i}$, for $j\not=i$. Let 
$\tilde{C}_{i}:= \sigma^{*} C_{i}-\sum_{j\not=i}E_{j}\in|3l-\sum_{j\not=i}E_{j}|$ 
be the  corresponding
curve  on $S$. 
By direct computation the following equality holds
\begin{equation} \label{eq-planarcubic}
\pi(\tilde{C}_{i}) = \mb \cap \{\ux\in \PP_{4}:\macB(y^{(i)})\ux=0\}
\end{equation}

In general, cubics $C_{i}$ are smooth, and the 
curves $\pi(\tilde{C}_{i}) \subset \mb$ are also  smooth planar cubics. We have the following
possible degenerations: \\
(i) \, The curve $C_{i}$ is irreducible, but $\sing(C_{i}) = \{ P_{j_0} \}$ 
for a $j_0 \neq i$. Then the exceptional curve  $E_{j_0}$ is a
component of the curve $\tilde{C}_{i} :=\sigma^{*}C_{i}-\sum_{j\not=i}E_{j}$
and the curve  $\tilde{C}_{i} - E_{j_0}$ is irreducible. By Remark~\ref{rem-rankdwa}.b the image
$\pi(E_{j_0})$ is a line on $\mb$, whereas $\pi(\tilde{C}_{i})$ is a smooth conic. In this way we obtain a decomposition
of $\mb \cap \{\ux\in \PP_{4}:\macB(y^{(i)})\ux=0\}$. Observe that for a given integer $i \neq j_0$ there exists at most one cubic in 
$|{\mathcal O}_{\Pl}(3) - \sum_{j \neq i} E_j - E_{j_0}|$. \\
(ii) \, The cubic $\tilde C_{i}$ is  union of a  line and a smooth
conic. Then, by [\zanurzeniebordygi] and Lemma~\ref{lem-quadrics}.a the line contains two (resp. three)
singularities of $X_{16}$ and the  conic contains 7
(resp. 6) of them. \\
(iii) \, The curve $\tilde C_{i}$ can be union of three
lines. The assumption  [\zanurzeniebordygi]  yields that each line contains three singularities of
$X_{16}$. \\
In this way (up to a permutation of the points in $P_1, \ldots, P_9$), we obtain the following possibilities for the 
decomposition of the cubic \eqref{eq-planarcubic} for $i=10$:
\begin{eqnarray} \label{eq-possibledecompositions}
 &&  (3l-2E_{1}-E_{2}-\dots-E_{9})+E_{1}, \nonumber \\
 &&  (l-E_{1}-E_{2})+(2l-E_{3}-\dots- E_{9}), \nonumber \\
 &&  (l-E_{1}-E_{2}-E_{3})+(2l-E_{4}-\dots- E_{9}),  \\
 &&  (l-E_{1}-E_{2}-E_{3})+(2l-E_{3}-\dots- E_{9})+E_{3},  \nonumber \\
 &&  (l-E_{1}-E_{2}-E_{3})+(l-E_{4}-E_{5}-E_{6})+(l-E_{7}-E_{8}-E_{9}). \nonumber
\end{eqnarray}

After those preparations we can prove the lemma.
Assume that an irreducible curve $\contractedcurve \subset \tts$ is contained in $\phi^{-1}(y)$ for a point $y\in\PP_{3}$ .  
The map $\phi|_{S_1} \, : \, S_{1} \rightarrow \PP_{3}$ is given by the linear system 
\begin{equation} \label{eq-linearsystemforphi}
|15\hatl-4 \sum_1^{10} \hatE_{i}-\sum_1^{46} F_{j}|,
\end{equation}
so $D \neq F_j$ for each $j \leq 46$.

Suppose that  $\rank( \macB(y))=2$. We can assume that $\contractedcurve \subset \phi^{-1}(y^{(10)})$.
Then $\psi(\contractedcurve) \subset \mb$ is a component of \eqref{eq-planarcubic}.
If $\psi(\contractedcurve)$ is image under $\pi$ of a curve from the 
system 
 $|3l-2E_{1}-E_{3}-\dots-E_{9}|$, then we have 
$$
\deg(\psi(\contractedcurve)) = (3l-2E_{1}-E_{3}-\dots-E_{9})\cdot(4l-\sum_1^{10} E_{i}) = 12-2-8 =2.
$$
Let $\sing(X_{5}) \cap \psi(\contractedcurve) = \{\psi(F_1), \ldots, \psi(F_p)\}$.  Since $D$ coincides with 
the proper transform of $\psi(\contractedcurve)$ under the blow-up $\psi$, we have
$$
D \in |3\hatl-2\hatE_{1}-\hatE_{2}-\dots-\hatE_{9})-F_{1}-\dots-F_{p}|.
$$ 
and, by \eqref{eq-linearsystemforphi}, the degree of $\phi(D)$
is $(5-p)$. Consequently,  the curve $\contractedcurve$ is contracted by $\phi$ iff $p=5$. 

In the following table   we collect  data on each curve considered in \eqref{eq-possibledecompositions}. 
In particular,
the integer in the last column is the number of singularities of $X_{5}$ that lie on $\psi(D)$ provided  $D$ is
contracted  by the map $\phi$:
\begin{center}
\begin{tabular}{|r|c|c|}\hline
$|\pi^{-1}(\psi(\contractedcurve))|$  \hspace*{6ex}&  $\deg(\psi(\contractedcurve))$ & $\# (\sing(X_{5}) \cap \psi(\contractedcurve))$ \\\hline\hline
  $3l-2E_{1}-E_{2}-\dots-E_{9}$ & 2&5\\\hline
  $2l-E_{1}-\ldots-E_{6}$ & 2 & 6\\ \hline
  $2l-E_{1}-\ldots-E_{7}$ & 1 & 2\\ \hline
  $l-E_{1}-E_{2}$ & 2 & 7\\ \hline
  $l-E_{1}-E_{2}-E_{3}$ & 1 & 3\\ \hline
  $E_{1}$ & 1 & 4\\ \hline
\end{tabular}
\end{center}

Finally, observe that for  a point $y^{(i)}\in\PP_{3}$, where $i = 1, \ldots 10$, 
the intersection 
\begin{equation} \label{eq-wholeplanarquintic}
X_{5} \cap \{\ux\in \PP_{4}:\macB(y^{(i)})\ux=0\}
\end{equation}
is a degree--5 planar curve,
so it is union of the cubic considered above and a conic (possibly
reducible) that does not lie on $\mb$. 
The points $\psi(F_{j})$ are singular points of $X_{5}$, so they
are also singular points of the quintic curve \eqref{eq-wholeplanarquintic}, which
yields some extra constrains on the possible arrangements. Since a line contained in 
\eqref{eq-wholeplanarquintic}
intersects the residual quartic in four points, the line of the type
$(l-E_{1}-E_{2})$ is never contracted.  Similar argument rules out  the conic 
$(2l-E_{1}- \ldots -E_{6})$. In this way we arrive at the cases (a)--(d) of the lemma.

Assume that $\rank( \macB(y))=3$. Then $\contractedcurve$ is 
the strict transform of a line $\lineonbordiga_{y} \subset \mb$.
In particular, there exist $d, m_{i}, n_{j} \in \ZZ$ such that 
$D \in |d \hatl -\sum_1^{10} m_{i} \hatE_{i}-\sum_{1}^{46} n_{j}F_{j}|.$ 
Since the curve $D$ is smooth and rational, we have  $n_{j}=0$ or $1$. 
Moreover, by 
the genus formula 
$$(d \hatl - \sum_1^{10} m_{i} \hatE_{i} - \sum_1^{46} n_{j}F_{j})\cdot
((d-3) \hatl-\sum_1^{10} (m_{i}-1) \hatE_{i} -  \sum_1^{46} (n_{j}-1)F_{j}) =
d^{2}-3d-\sum_1^{10}(m_{i}^{2}-m_{i}) = -2.
$$
Furthermore, the equality $4d-\sum_1^{10} m_{i}=1$ holds
because $\lineonbordiga_{y}$ is a line on $\mb$
(see also Lemma~\ref{lem-equationofquintic}.b). 
Finally, since $D$ is contracted by the map given by the linear system $|3H_1 - S_1|$ we have 
$$
(15 \hatl-4\sum_1^{10} \hatE_{i}-\sum_{1}^{46} F_{j}) \cdot (d \hatl-\sum_1^{10} m_{i} \hatE_{i}-
\sum_1^{46} n_{j}F_{j}) =
15d-4\sum_1^{10} m_{i}-\sum_1^{46} n_{j}=0. 
$$
From the above we obtain the following equations
\begin{eqnarray*}
  &&\sum m_{i}^{2}=d^{2}+d+1,\\
  &&\sum m_{i}=4d-1,\\
  &&4-d=\sum n_{j},
\end{eqnarray*}
where $n_j = 0,1$. 
The solution $d = 3$, $m_1 =2$, $m_i = 1$ for $i >1$ is excluded by Lemma~\ref{lem-quadrics}.a.
The others correspond to
the cases (a)--(c) of the lemma.
\end{proof}

Now we are in position to prove
\begin{lemm} \label{lem-contractedcurves}
Let $y \in \PP_3$ be  a point such that $\rank(\macB(y)) = 3$. Then the fiber $\phi^{-1}(y)$
 is $1$-dimensional iff $\rank(\mq(y))=6$.
\end{lemm}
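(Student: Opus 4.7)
The plan is to combine Lemma~\ref{lem-genericfiberofphi} with a block-matrix rank calculation for $\mq(y)$. First, by that lemma a point $z \in \ttx$ satisfies $\phi(z) = y$ iff $\operatorname{span}((\psi(z):0:0:0),\Pi) \subset Q(y)$; writing $\ux := \psi(z)$ and using the identity~\eqref{eq-ciaglepowtarzana}, this inclusion decomposes into the two conditions
\[
\macB(y)\ux = 0 \qquad \text{and} \qquad \ux^{T}\underline{\mq}(y)\ux = 0 \, .
\]
Under the hypothesis $\rank(\macB(y))=3$, the first equation cuts out a projective line $\ell = \PP(K) \subset \PP_{4}$, where $K := \ker \macB(y) \subset \CC^{5}$ is two-dimensional. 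The second equation restricts to a binary quadric on $\ell$, which is either identically zero or vanishes at most at two points. Combined with the fact (recalled just before this lemma) that $\Phi^{-1}(y) \subset \mbox{Bl}_{\mb}\PP_{4}$ is already a single line when $\rank(\macB(y))=3$, one concludes that $\phi^{-1}(y) = \Phi^{-1}(y) \cap \ttx$ is one-dimensional iff the line $\ell$ is contained in $X_{5}$, iff $\underline{Q}(y)|_{K} \equiv 0$.

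What remains is to translate the vanishing of $\underline{Q}(y)|_{K}$ into a rank condition on $\mq(y)$. I would choose a basis of $\CC^{5}$ in which $\macB(y) = (I_{3} \mid 0)$, so that $K$ is spanned by the last two basis vectors; writing $\underline{\mq}(y)$ in this basis as a symmetric block matrix with blocks of sizes $3,2$, the matrix $\mq(y)$ takes the block form
\[
\mq(y) = \left[ \begin{array}{ccc} A & B & I_{3} \\ B^{T} & C & 0 \\ I_{3} & 0 & 0 \end{array} \right],
\]
where the $2 \times 2$ symmetric block $C$ represents the restriction $\underline{\mq}(y)|_{K}$. A direct solution of $\mq(y) v = 0$ yields $\dim \ker \mq(y) = \dim \ker C$, whence $\rank(\mq(y)) = 6 + \rank(C)$. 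Consequently $\rank(\mq(y)) = 6$ precisely when $C = 0$, that is, when $\underline{Q}(y)|_{K}$ vanishes identically; combined with the previous paragraph this is exactly the claim.

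The main delicate step is the dichotomy at the end of the first paragraph: one must rule out that $\phi^{-1}(y)$ could gain an extra one-dimensional component lying in the exceptional locus of $\psi$ even when $\ell \not\subset X_{5}$. This is resolved by the observation that $\Phi^{-1}(y)$ is already exactly a line in $\mbox{Bl}_{\mb}\PP_{4}$, so passing to the proper transform $\ttx \subset \mbox{Bl}_{\mb}\PP_{4}$ cannot enlarge its dimension. Once this subtlety is settled, the remaining block computation is routine.
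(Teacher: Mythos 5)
Your argument is correct, and its second half takes a genuinely different route from the paper's. Both proofs begin the same way: Lemma~\ref{lem-genericfiberofphi} together with \eqref{eq-ciaglepowtarzana} reduces the question to whether the line $\ell=\{\ux:\macB(y)\ux=0\}$ lies entirely in the fiber, which (since $\Phi^{-1}(y)$ is an irreducible curve) happens exactly when $\underline{Q}(y)$ vanishes identically on $K=\ker\macB(y)$. The paper then passes to the rank condition geometrically and in two separate steps: if the line is contracted, $Q(y)$ contains the $4$-space $\operatorname{span}(\ell,\Pl)$, which forces $\rank(\mq(y))\le 6$, while $\rank(\macB(y))=3$ gives the reverse bound; conversely, when $\rank(\mq(y))=6$ the vertex line $\sing(Q(y))$ misses $\Pl$ and projects onto the contracted line. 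You replace both steps by the single block-matrix identity $\rank(\mq(y))=6+\rank\bigl(\underline{\mq}(y)|_{K}\bigr)$, obtained by normalizing $\macB(y)=(I_{3}\mid 0)$ and solving $\mq(y)v=0$ blockwise; this is more computational but settles both implications at once and makes explicit the inequality $\rank(\mq(y))\ge 6$ that the paper only asserts. Two minor points. First, in the chain ``$\phi^{-1}(y)$ is one-dimensional iff $\ell\subset X_{5}$ iff $\underline{Q}(y)|_{K}\equiv 0$'' the direction you actually need is that $\underline{Q}(y)|_{K}\equiv 0$ puts all of $\ell$ in the fiber; the one-line justification is that $\macB(y)\ux=0$ together with $\ux^{T}\underline{\mq}(y)\ux=0$ is precisely a linear relation among the rows of $\AAA(\ux)$, so every point of $\ell$ automatically lies on $X_{5}$. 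Second, your closing observation that $\Phi^{-1}(y)$ is already a line in $\mbox{Bl}_{\mb}\PP_{4}$, so that no extra component can appear in the exceptional locus of $\psi$, is exactly the right thing to check and is implicit in the paper as well.
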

\begin{proof}
By abuse of notation we put $\psi$ to denote the blow-up $\mbox{Bl}_{\mb}\PP_4 \rightarrow \PP_4$.

Assume that the line $\Phi^{-1}(y)$ is contracted by $\phi$.
Then the set
$\psi(\Phi^{-1}(y))=\{\ux\in\PP_{4}:\macB(y)\ux=0\}$ is a line on $X_5$. 
Observe that the linear space $\mbox{span}(\{(\underline{x}:0:0:0) \, : \, \underline{x} \in  \psi(\Phi^{-1}(y)) \}, \Pl)$
is $4$-dimensional.
By Lemma~\ref{lem-genericfiberofphi} the quadric $Q(y)$ contains the
$4$--space
 $\mbox{span}(\{(\underline{x}:0:0:0) \, : \, \underline{x} \in  \psi(\Phi^{-1}(y)) \}, \Pl)$,
 which yields
$\rank(\mq(y))\le 6$. 
Finally  $\rank(\mq(y)) = 6$, because $\rank(\macB(y))=3$.

On the other hand, if $\rank(\mq(y))= 6$, then $\sing(Q(y))$ is a line.
Since  $\rank(\macB(y))= 3$,
the line   $\sing(Q(y))$ does not meet the plane $\Pl$. 
Put $L$ to denote
the image of the line  $\sing(Q(y))$ under  the projection from the plane $\Pl$. Then
$\operatorname{span}((\ux:0:0:0), \Pl) \subset Q(y)$ for every $\ux \in L$. From Lemma~\ref{lem-genericfiberofphi} we obtain that
the the proper transform of the line $L$ under the blow-up $\psi$ is contracted by $\phi$.  
\end{proof}
In the theorem below we identify curves in $\PP_4$ with their proper transforms  under the blow-up $\psi$:
whenever we say a line (resp. a conic) we mean its proper transform.
\begin{theo} \label{thm-fibers}
  There are four types of fibers $\phi^{-1}(y)$ of the map
  $\phi:\ttx\lra\PP_{3}$: 
  \begin{itemize}
  \item [a)] union of the conic residual to the cubic 
$\mb \cap \Phi^{-1}(y)$
in the planar quintic
$X_{5} \cap \Phi^{-1}(y)$ with the components of the cubic 
that satisfy the conditions of Lemma~\ref{lem-nocontractedcurvesonbordiga}
iff $\rank(\mq(y))\in \{5, 6, 7\}$  and   $\rank(\macB(y))=2$ 
(i.e. a singularity of  $Q(y)$ lies on $\Pl$),
 \item [b)] a line in $\PP_{4}$ iff $\rank(\mq(y))=6$ and   $\rank(\macB(y))=3$ 
(equivalently $\sing(Q(y)) \cap \Pl = \emptyset$),
 \item [c)] one point iff $\rank(\mq(y)) = 7$ and  $\rank(\macB(y))=3$,
  \item [d)] two points iff $\rank(\mq(y))=8$.
  \end{itemize}
\end{theo}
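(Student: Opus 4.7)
The plan is to combine the description of the Bordiga conic bundle $\Phi$ with a rank formula that relates $\mq(y)$, $\macB(y)$ and $\underline{\mq}(y)$. Writing
$$
\mq(y)=\left[\begin{array}{cc}\underline{\mq}(y) & \macB(y)^{T}\\ \macB(y) & 0\end{array}\right]
$$
and performing standard block row/column operations yields the identity
$$
\rank(\mq(y))=2\rank(\macB(y))+\rank\!\bigl(\underline{\mq}(y)|_{\ker\macB(y)}\bigr).
$$
Remark~\ref{rem-rzadB} gives $\rank(\macB(y))\in\{2,3\}$ and Remark~\ref{rem-noquadricsofrankfour} then forces $\rank(\mq(y))\geq 5$. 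A direct check on $x=(0,\ldots,0,x_{5},x_{6},x_{7})\in\Pl$ shows that $\mq(y)x=0$ reduces to $\macB(y)^{T}(x_{5},x_{6},x_{7})^{T}=0$, so $\rank(\macB(y))=2\iff\sing(Q(y))\cap\Pl\neq\emptyset$, which handles the parenthetical equivalences in (a), (b). Combined, these constraints leave exactly the four admissible pairs $(\rank(\macB(y)),\rank(\mq(y)))$ listed in the statement.

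For $\rank(\macB(y))=3$ the scheme $\Phi^{-1}(y)$ is the $3$-secant line $L_{y}=\{\macB(y)\ux=0\}$. I would apply Lemma~\ref{lem-genericfiberofphi} to identify the points of $\phi^{-1}(y)$ lying above $X_{5}\setminus\mb$ with those $\ux\in L_{y}$ satisfying $\ux^{T}\underline{\mq}(y)\ux=0$. The restriction of the quadric $\underline{Q}(y)$ to $L_{y}$ is exactly $\underline{\mq}(y)|_{\ker\macB(y)}$, of rank $\rank(\mq(y))-6\in\{0,1,2\}$: rank $0$ means $L_{y}\subset\underline{Q}(y)$, so the whole proper transform of $L_{y}$ sits in $\phi^{-1}(y)$, giving case (b) (in agreement with Lemma~\ref{lem-contractedcurves}); rank $1$ produces a single double point (case (c)); rank $2$ produces two distinct points (case (d)). Since $\psi$ is an isomorphism above $X_{5}\setminus\mb$, these $\PP_{4}$-points lift uniquely to $\ttx$.

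For $\rank(\macB(y))=2$ the fiber $\Phi^{-1}(y)$ contains the plane $\Pi_{y}=\{\macB(y)\ux=0\}$, which meets $\mb$ along a cubic $C_{y}$ and $X_{5}$ along a planar quintic. Using \eqref{eq-bordigarational} and \eqref{eq-relacjaAB}, the cubics $\mc_{0},\ldots,\mc_{3}$ restrict on $\Pi_{y}$ to a common cubic $c$ with scalar factors $y_{0},-y_{1},y_{2},-y_{3}$, and substituting into \eqref{eq-quintic} gives
$$
\det(\AAA(\ux))|_{\Pi_{y}}=c\cdot\underline{Q}(y)|_{\Pi_{y}}.
$$
Hence $\Pi_{y}\cap X_{5}=C_{y}\cup D_{y}$, where $D_{y}:=\underline{Q}(y)\cap\Pi_{y}$ is the residual conic. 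The conic $D_{y}$ has rank $\rank(\mq(y))-4\in\{1,2,3\}$, which pins down $\rank(\mq(y))\in\{5,6,7\}$. The fiber $\phi^{-1}(y)$ is then the union of the strict transform of $D_{y}$ (the part disjoint from $\tts$) with those components of $C_{y}$ whose proper transforms on $\tts$ are contracted by $\phi|_{\tts}$; the latter are classified exhaustively in Lemma~\ref{lem-nocontractedcurvesonbordiga}.

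The main technical obstacle is the bookkeeping in case (a): one must check that $D_{y}$ shares no component with $C_{y}$ (which would force $\Pi_{y}\subset\underline{Q}(y)$, contradicting $\rank(\mq(y))\geq 5$), that no spurious embedded components of the strict transform of $D_{y}$ appear along $\tts$, and that every curve listed in Lemma~\ref{lem-nocontractedcurvesonbordiga} genuinely contributes to $\phi^{-1}(y)$ through the correct lift onto the exceptional surface $\tts$, not merely onto some intermediate blow-up of $\mb$. Once these local issues are settled, the rank-pair enumeration from the first paragraph closes the case analysis.
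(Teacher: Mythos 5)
Your proposal is correct and rests on the same three pillars as the paper's proof: Lemma~\ref{lem-genericfiberofphi} (membership in $\phi^{-1}(y)$ amounts to $\macB(y)\psi(z)=0$ together with $\psi(z)^{T}\underline{\mq}(y)\psi(z)=0$), Lemma~\ref{lem-contractedcurves} for case (b), and Lemma~\ref{lem-nocontractedcurvesonbordiga} for the components of the cubic in case (a). The difference lies in the bookkeeping, and your version is tidier. The paper separates (c) from (d) by showing directly that the restriction \eqref{eq-megamegarestrykcja} is a full square iff $\rank(\mq(y))<8$, via an analysis of the vertex of the cone $Q(y)$; your block-congruence identity $\rank(\mq(y))=2\rank(\macB(y))+\rank\bigl(\underline{\mq}(y)|_{\ker\macB(y)}\bigr)$ yields that dichotomy, the full list of admissible rank pairs, and the exclusion of $\rank(\mq(y))=4$ via Remark~\ref{rem-noquadricsofrankfour}, all at once. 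Likewise your factorization $\det(\AAA(\ux))=c\cdot\underline{Q}(y)(\ux)$ on the plane $\{\macB(y)\ux=0\}$, which follows because the signed minors $(\mc_0,-\mc_1,\mc_2,-\mc_3)$ span $\ker\maca(\ux)$ and hence are proportional to $y$ there, identifies the residual conic as $\underline{Q}(y)\cap\{\macB(y)\ux=0\}$, of rank $\rank(\mq(y))-4$; the paper only asserts that the planar quintic splits off a residual conic without writing down its equation. Both refinements are genuine improvements in transparency, though the route is the same.

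Two caveats. First, your parenthetical claim that a common component of $D_{y}$ and $C_{y}$ would force $\{\macB(y)\ux=0\}\subset\underline{Q}(y)$ is not right: a shared component only means the cubic $c$ and the conic have a common linear factor, which is perfectly compatible with $\rank(\mq(y))\geq 5$. Nothing in the statement requires the two curves to be componentwise disjoint, so that check can simply be dropped. Second, in cases (c) and (d) you count only the points of the fiber lying over $X_{5}\setminus\mb$; to conclude these are all of $\phi^{-1}(y)$ one should also rule out extra points sitting over the three points of $\{\macB(y)\ux=0\}\cap\mb$, on $\tts$ or on one of the curves $F_{j}$. The paper invokes Lemma~\ref{lem-genericfiberofphi} as a clean equivalence for every $z$, although its written proof of the forward implication assumes $z\notin\tts$, so it carries the same gap; this is a shared, minor issue rather than a defect specific to your argument.
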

\begin{proof}
Suppose that $\rank(\macB(y))=3$. Then the linear space
 $\mbox{span}(\{(\underline{x}:0:0:0) \, : \, \underline{x} \in  \psi(\Phi^{-1}(y)) \}, \Pl)$
is $4$-dimensional and $\sing(Q(y)) \cap \Pl = \emptyset$. 
In view of Lemma~\ref{lem-contractedcurves},
we can 
assume that  $\rank(\mq(y)) \geq 7$ and
 the line $\psi(\Phi^{-1}(y)) =\{\ux:\macB(y)\ux=0\}$ is not contained in $X_{5}$. 

\noindent
Moreover, by   \eqref{eq-ciaglepowtarzana},  $\mbox{ for every point } x = (\ux, x_5, x_6, x_7) \, \in \, 
 \operatorname{span}(\{(\underline{x}:0:0:0) \, : \, \underline{x} \in  \psi(\Phi^{-1}(y)) \}, \Pl)$
  we have 
\begin{equation} \label{eq-malarestrykcja}
x^{T}\mq(y)x = \ux^{T}\underline{\mq}(y)\ux  \, .
\end{equation}
Observe, that the quadratic form given by  $\underline{\mq}(y)$ does not vanish identically on the line 
$\{\ux:\macB(y)\ux=0\}$ because the latter  is not contained in $X_{5}$.
Consequently, intersection of $Q(y)$ with the linear $4$-space 
$\operatorname{span}(\{(\underline{x}:0:0:0) \, : \, \underline{x} \, \in \,  \psi(\Phi^{-1}(y)) \}, \Pl)$
 consists of either one 
or two $3$-spaces.

Lemma~\ref{lem-genericfiberofphi} implies that 
the fibre  $\phi^{-1}(y)$  consists of  a unique point iff the restriction   
\begin{equation} \label{eq-megamegarestrykcja}
Q(y)|_{\operatorname{span}(\{(\underline{x}:0:0:0) \, : \, \underline{x} \, \in \,  \psi(\Phi^{-1}(y)) \}, \Pl)}
\end{equation}
is a full  square. \\
Suppose that the fibre in question is one point. From \eqref{eq-malarestrykcja} there exists a point
$\underline{v} \in \PP_5$, such that
$$
\macB(y)\underline{v} = 0 \quad \mbox{and} \quad   \underline{\mq}(y)\underline{v} = 0
$$
which means that $(\underline{v}:0:0:0) \in \sing(Q(y))$ and  $\rank(\mq(y))<8$. \\
Assume that  $\rank(\mq(y))<8$. Then 
 $Q(y)$ is a cone with the unique  vertex 
$(\underline{v}: v_5: v_6: v_7)$  away  from the plane $\Pl$. The latter yields $\underline{v} \neq 0$.
 Moreover, since the tangent space to $Q(y)$ in each point contains
the vertex we have 
$\macB(y)\underline{v} = 0$ and
 $$
(\underline{v}: v_5: v_6: v_7) \in  \operatorname{span}(\{(\underline{x}:0:0:0) \, : \, \underline{x} \in  \psi(\Phi^{-1}(y)) \}, \Pl)$$
Now $(\underline{v}: v_5: v_6: v_7)$ is a singularity 
of the restriction \eqref{eq-megamegarestrykcja}, so the polynomial
$\ux^{T}\underline{\mq}(y)\ux$ has a unique double root on the line $\{\ux:\macB(y)\ux=0\}$ and 
 \eqref{eq-megamegarestrykcja} is a full square.

Assume that $y\in\PP_{3}$ is a point such that $\rank(\macB(y))=2$,
and maintain the notation of the proof of Lemma~\ref{lem-nocontractedcurvesonbordiga}.
Then $y = y^{(i)}$ for an $i \in \{1, \ldots, 10\}$. 
By definition of the map $\phi$,  the proper transform under the blow-up $\psi$ of the (possibly reducible) conic residual to  \eqref{eq-planarcubic} in the quintic \eqref{eq-wholeplanarquintic}  is always contracted by $\phi$.
Moreover, a component of \eqref{eq-planarcubic} is contracted iff it satisfies the conditions
of Lemma~\ref{lem-nocontractedcurvesonbordiga}. \\
Observe that rank of the quadric $Q(y^{(i)})$ does not exceed $7$ because we have $\rank(\macB(y^{(i)}))=2$. 
\end{proof}

\begin{rem} \label{rem-atmostten}
By Lemma~\ref{lem-quadrics}.d there are exactly ten fibers of $\phi$ of the type a). The number of fibers of type b) will be discussed in the next section
(see Cor.~\ref{cor-singularities}).
\end{rem}

\section{Discriminant of the web $\WWW$} \label{discriminant}

In this section we maintain the notation and the assumptions of the previous ones. In particular we assume
that [\exactlytensingularities], [\zanurzeniebordygi], [\exactlyfortysixsingularities] hold.
Let $S_8$ stand for the discriminant surface of the web $\WWW$.  From now on we assume that

\vspace*{1ex}
\noindent
{\bf [\isolatedsingularities]:} {\sl the discriminant surface  $S_8$ has only isolated singularities }.

\vspace*{1ex}
To simplify notation we put
$$
{\mathbb I}_l :=  [a_{i,j}]_{i,j=0,\ldots,7},   \mbox{ where } a_{i,i} = 1 \mbox{ for  } i= 1, \ldots, l \mbox{ and } a_{i,j} = 0 \mbox{ otherwise.}
$$

At first we give conditions when  a singularity of $S_8$ is a node:
\begin{lemm} \label{lem-rankseven}
Let $Q_0$ be a rank-$7$ quadric in the web $\WWW$. 
\begin{itemize}
\item [a)] The quadric $Q_0$ is a smooth point of $S_8$ iff $\sing(Q_0) \notin \XX$.
\item [b)] The quadric $Q_0$ is a node of $S_8$ iff  $\sing(Q_0) \in \XX$.
\end{itemize}
\end{lemm}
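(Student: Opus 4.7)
The plan is to analyze the local behaviour of the discriminant polynomial $P(y):=\det\mq(y)$ near the point $y^\star\in\PP_3$ parametrizing $Q_0$. Since $\rank(\mq_0)=7$, its kernel is spanned by a unique point $v=\sing(Q_0)\in\PP_7$, and after rescaling the basis of the web $\WWW$ I may assume $y^\star=(1{:}0{:}0{:}0)$, in which case $\mathrm{adj}(\mq(y^\star))=c\cdot vv^T$ for a nonzero constant $c$. Differentiating gives
\[\frac{\partial P}{\partial y_k}\Big|_{y^\star}=\mathrm{tr}(\mathrm{adj}(\mq_0)\,\mq_k)=c\,v^T\mq_k v=2c\,Q_k(v),\]
so $y^\star$ is smooth on $S_8$ iff $Q_k(v)\neq 0$ for some $k$. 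Since $v\in\sing(Q_0)\subset Q_0$ automatically, this is equivalent to $v\notin\XX$, giving (a).

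For (b), part (a) already yields the ``$\Rightarrow$'' direction, so I need only verify that if $v\in\XX$ then $y^\star$ is an $A_1$ point. A first observation is that $v$ must lie on $\Pl$: otherwise $\XX$ would be smooth at $v$ by [\exactlytensingularities], forcing the four differentials $dQ_i(v)$ to be linearly independent, which contradicts $dQ_0(v)=2\mq_0 v=0$. Hence $v\in\sing(\XX)\cap\Pl$ is one of the ten nodes studied in Section~\ref{sect-singularities}, and I can appeal to the rank computations established there.

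To analyse the Hessian of $P$ at $y^\star$, I would change coordinates on $\PP_7$ so that $\mq_0$ becomes $\mathrm{diag}(\mathbb{I}_7,0)$ and $v=e_8$, then write the transformed pencil in block form (with $t=(y_1,y_2,y_3)$)
\[\mq(y)=\begin{bmatrix}\mathbb{I}_7+A(t)&B(t)\\ B(t)^T& c(t)\end{bmatrix}.\]
The Schur complement formula reads $P(y)=\det(\mathbb{I}_7+A(t))\cdot(c(t)-B(t)^T(\mathbb{I}_7+A(t))^{-1}B(t))$. The linear term $c(t)=\sum_k t_k\cdot 2Q_k(v)$ vanishes identically precisely because $v\in\XX$, so the second-order jet of $P$ at $y^\star$ reduces to $-B(t)^T B(t)$. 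Consequently the Hessian is $H_{kl}=-2\,B_k^TB_l$ with $B(t)=\sum_k t_k B_k$.

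The rank of $H$ equals the dimension of $\mathrm{span}\{B_1,B_2,B_3\}$. In the new basis $B_k$ is obtained from the last column of $\tilde\mq_k$ by dropping its bottom entry, which equals $(\tilde\mq_k)_{88}=2Q_k(v)=0$; hence, up to the coordinate change, these vectors coincide with $\mq_1 v,\mq_2 v,\mq_3 v$. Since $v\in\Pl$, the block shape of $\mq_k$ shows that $\mq_k v$ is the $k$-th column of $\macC(v_5,v_6,v_7)$ padded by zeros, and by \eqref{eq-rankonlythree} this matrix has rank exactly $3$. Combined with $\mq_0 v=0$, this forces $\mq_1 v,\mq_2 v,\mq_3 v$ to be linearly independent, so $H$ has rank $3$ and $y^\star$ is an ordinary node of $S_8$. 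The main bookkeeping hurdle is keeping track of how the Hessian of the determinant is controlled by $\rank\macC$; once one notices that vanishing of the linear term $c(t)$ is equivalent to $v\in\XX$, the reduction to Lemma~\ref{lem-quadrics} is immediate.
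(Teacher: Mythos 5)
Your proof is essentially the paper's: part (a) is the same first-order expansion of $\det\mq(y)$ (the paper normalizes $\mq_0={\mathbb I}_7$ and reads off the linear term $\sum_k\mu_k q_{7,7}^{(k)}$ instead of invoking the adjugate), and in part (b) the paper likewise identifies the Hessian with the Gram matrix of the vectors $\mq_1v,\mq_2v,\mq_3v$ and concludes from their linear independence, which it extracts from Lemma~\ref{lem-quadrics}.b rather than from \eqref{eq-rankonlythree} --- an equivalent source. The one fragile step, which you inherit verbatim from the paper, is the assertion that $\rank H=\dim\operatorname{span}\{B_1,B_2,B_3\}$: over $\CC$ the Gram matrix of linearly independent vectors with respect to the standard symmetric (non-Hermitian) bilinear form can be degenerate, since a proper subspace of $\CC^7$ may be isotropic, so independence of the $B_k$ alone does not force $\rank H=3$; as the paper's own argument makes exactly the same identification, this is not a defect of your proposal relative to the paper, but neither text closes this gap.
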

\begin{proof}
Let $\mq_k =: [q_{i,j}^{(k)}]_{i,j=0,\ldots,7}$ and let ${\mathcal Q}^{(k)} := (q_{0,7}^{(k)}, \ldots, q_{6,7}^{(k)})$.
After an appropriate change of coordinates we can assume that $\mq_0 = {\mathbb I}_7$. In particular,
 $\sing(Q_0)  = \{(0:\ldots:0:1)\}$.

Let  ${\mathfrak G} := [{\mathfrak g}_{i,j}]_{i,j=1,2,3}$, where ${\mathfrak g}_{i,j} := \langle{\mathcal Q}^{(i)},{\mathcal Q}^{(j)}\rangle$
and $\langle\cdot,\cdot\rangle$ stands for the bilinear form defined by the identity matrix.
By direct computation we have
\begin{eqnarray*}
\det(\mq_0 + \sum_{k=1}^{3} \mu_k \cdot \mq_k ) = (\sum_{k = 1}^{3} \mu_k \cdot q_{7,7}^{(k)}) -  
     ((\mu_1,  \mu_2,  \mu_3) \, \, {\mathfrak G} \, \,  (\mu_1,  \mu_2,  \mu_3)^T) +
 (\mbox{terms of degree} \geq 3) \, .    
\end{eqnarray*}

\noindent
{\sl a)} Obviously, $(1:0:0:0)$ is a smooth point of $S_8$ iff the vector $(q_{7,7}^{(1)}, q_{7,7}^{(2)},  q_{7,7}^{(3)})$
does not vanish. The latter holds iff $(1:0:0:0) \notin \XX$, which concludes the proof.

\noindent
{\sl b)} ($\Rightarrow$): the implication in question results immediately from the part a).

\noindent
($\Leftarrow$):
Assume that $(q_{7,7}^{(1)}, q_{7,7}^{(2)},  q_{7,7}^{(3)}) = 0$. Then,   $Q_0 = (1:0:0:0) \in \sing(S_8)$
is a node iff the matrix ${\mathfrak G}$ has maximal rank, i.e.  ${\mathcal Q}^{(1)}$,  ${\mathcal Q}^{(2)}$,  ${\mathcal Q}^{(3)}$ 
are linearly independent. Moreover, we have  $(0:\ldots:0:1) \in \sing(\XX)$. 
 
Suppose that  $\rank(\mathfrak G) < 3$.
 Then, the last row in a matrix obtained as a  non-trivial  linear combination of the matrices $\mq_1, \mq_2, \mq_3$ vanishes,
 which means that the point $(0:\ldots:0:1)$ is a singularity of a quadric that belongs to
$\mbox{span}(\{Q_1, Q_2, Q_3\})$. In particular,  the quadric in question does not coincide with $Q_0$. The latter is impossible by 
Lemma~\ref{lem-quadrics}.b.
Contradiction.
\end{proof}
In the rank-$6$ case we have the following characterization.
\begin{lemm} \label{lem-ranksix}
Let $Q_0$ be a rank-$6$ quadric in the web $\WWW$. 
\begin{itemize}
\item [a)] The quadric $Q_0$ is a node of $S_8$ iff  $\sing(Q_0) \nsubseteq Q$ for all $Q \neq Q_0$, $Q \in \WWW$.
\item [b)]  $Q_0$ is an A$_m$ singularity, where $m \geq 2$, iff $\sing(Q_0) \cap \Pl = \emptyset$ and there exists a quadric $Q \in \WWW$, $Q \neq Q_0$
such that $\sing(Q_0) \subset Q$.
\item [c)] The quadric $Q_0$ is a double point of the surface $S_8$. 
\end{itemize}
\end{lemm}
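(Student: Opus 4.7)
The plan is to write down a normal form for the local defining equation of $S_8$ at the point $[Q_0]$ via the Schur complement, and then read off the multiplicity and the rank of its quadratic part. After a linear change of coordinates in $\CC^8$ (which does not affect the local analytic type at $[Q_0]$) one may take $\mq_0=\bigl(\begin{smallmatrix}\widetilde{M}&0\\0&0\end{smallmatrix}\bigr)$ with $\widetilde{M}$ an invertible $6\times 6$ block, so that $\sing(Q_0)$ is the span of the last two basis vectors. Writing each $\mq_k=\bigl(\begin{smallmatrix}A_k&B_k^T\\B_k&C_k\end{smallmatrix}\bigr)$ for $k=1,2,3$ and applying the Schur complement to the block $\widetilde{M}+\sum_k\mu_k A_k$ (invertible near $\mu=0$), one obtains
\[
\det\Bigl(\mq_0 + {\textstyle\sum_k}\mu_k\mq_k\Bigr) \;=\; \bigl(\det\widetilde{M}+O(\mu)\bigr)\cdot\det\Bigl({\textstyle\sum_k}\mu_k C_k+O(\mu^2)\Bigr),
\]
so the lowest-order part of the local equation of $S_8$ at $[Q_0]$ is, up to a nonzero unit, the ternary quadratic form $q(\mu):=\det\bigl(\sum_k\mu_k C_k\bigr)$. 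The key observation is that, since $\sing(Q_0)$ is spanned by the last two coordinate vectors, a quadric $Q_\mu:=\sum_k\mu_k Q_k$ contains the line $\sing(Q_0)$ if and only if the symmetric matrix $\sum_k\mu_k C_k$ vanishes.

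Part (c) amounts to the assertion that the multiplicity of $S_8$ at $[Q_0]$ equals two, i.e.\ $q\not\equiv 0$. Were $q$ to vanish identically, every quadric in $\WWW$ would contain $\sing(Q_0)$, whence $\sing(Q_0)\subset\XX$; but since $dQ_0$ vanishes on $\sing(Q_0)$, the whole line would lie in $\sing(\XX)$, contradicting the finiteness of $\sing(\XX)$ asserted in~[\exactlytensingularities]. For part (a), I factor $q$ as the composition of the linear map $L\colon\CC^3\to\mathrm{Sym}_2(\CC)$, $\mu\mapsto\sum_k\mu_k C_k$, with $\det$, which is a quadratic form of full rank three on the three-dimensional space $\mathrm{Sym}_2(\CC)$. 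As $L$ is a map between vector spaces of equal dimension, it is injective iff it is an isomorphism, and then $q$ has rank three; otherwise $\rank(q)\leq 2$. Injectivity of $L$ is precisely the condition that no $Q\neq Q_0$ in $\WWW$ contains $\sing(Q_0)$; combining this with the Morse lemma and part (c) establishes (a).

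For part (b) I recall that, by the splitting lemma together with Arnold's classification, an isolated surface double point is of type $A_m$ with $m\geq 2$ iff its quadratic part has corank one, i.e.\ rank two. Existence of $Q\in\WWW$, $Q\neq Q_0$, with $\sing(Q_0)\subset Q$ is the same as $L$ being non-injective, and so yields $\rank(q)\leq 2$. The main obstacle is to prove the matching lower bound $\rank(q)\geq 2$ under the extra hypothesis $\sing(Q_0)\cap\Pl=\emptyset$, thereby ruling out $D$- and $E$-type degenerations. The strategy is to track when $\rank(q)\leq 1$ through the factorisation $q=\det\circ L$: this happens only when the image of $L$ inside $\mathrm{Sym}_2(\CC)$ lies inside, or is tangent to, the two-dimensional quadric cone of singular symmetric matrices, and in either sub-case one produces a subfamily of $\WWW$ vanishing along $\sing(Q_0)$ large enough that a point $P\in\sing(Q_0)\setminus\Pl$, which exists by the extra hypothesis, combined with $dQ_0(P)=0$, must lie in $\sing(\XX)$---again contradicting~[\exactlytensingularities]. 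The converse directions of (a) and (b) then follow by reading the Schur-complement expansion in reverse.
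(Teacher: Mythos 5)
Your Schur--complement reduction is sound and is in substance identical to the paper's own setup: the quadratic part $q(\mu)=\det\bigl(\sum_k\mu_k C_k\bigr)$ is exactly the form $(\mu_1,\mu_2,\mu_3)\,\mathfrak H\,(\mu_1,\mu_2,\mu_3)^T$ the paper writes down, and your translation ``$Q_\mu\supset\sing(Q_0)$ iff $L(\mu)=0$'' together with ``$L$ injective iff $\rank(q)=3$'' gives part (a) just as the paper does. The genuine gap is in part (c). You claim that $q\equiv 0$ forces every quadric of $\WWW$ to contain $\sing(Q_0)$. That implication is false: $q\equiv0$ only says that the image of $L$ lies in the cone of singular $2\times2$ symmetric matrices, and since the maximal isotropic subspaces of $\det$ on the three-dimensional space of such matrices are one-dimensional, it forces $\rank(L)\le1$, not $L=0$ (take $C_1=\mathrm{diag}(1,0)$, $C_2=C_3=0$: then $q\equiv0$ yet $Q_1\not\supset\sing(Q_0)$). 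Moreover, even after locating a point of $\XX$ on the line $\sing(Q_0)$ (hence a singular point of $\XX$), this yields no contradiction with [A1] when $\sing(Q_0)\cap\Pl\ne\emptyset$, since [A1] permits ten singular points on $\Pl$; and by parts (a) and (b) this is exactly the only case left for (c). The paper closes it by invoking Lemma~\ref{lem-A1nodes} (all singularities of $\XX$ are nodes): assuming $\mathfrak H=0$ it normalises the $2\times2$ blocks to produce further quadrics through the line, and then computes the tangent cone of $\XX$ at the point $P\in\sing(Q_0)\cap\Pl$, showing it degenerates into two $3$-planes, contradicting nodality. Nothing in your proposal supplies this step.

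A smaller but real issue is in part (b): in the sub-case $\rank(L)=2$ with image tangent to the cone, your claim that one ``produces a subfamily of $\WWW$ vanishing along $\sing(Q_0)$ large enough'' is not what happens --- there only a pencil through $Q_0$ contains the line. The mechanism that actually works (and that the paper carries out by normalising $C_2=\mathrm{diag}(1,0)$ and reading $q_{7,7}^{(3)}=0$ off $\mathfrak h_{3,2}=0$) is that tangency forces the restrictions of the remaining quadrics to the line to acquire a common zero, which is then a singular point of $\XX$ off $\Pl$, contradicting [A1]. So your strategy for (b) is correct in outline, but the stated mechanism does not cover the tangent sub-case as written.
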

\begin{proof} 
As in the proof of Lemma~\ref{lem-rankseven} we change the coordinates in such a way that $\mq_0 = {\mathbb I}_6$.
Then, the line $\sing(Q_0)$ is the set of zeroes of the coordinates $x_0, \ldots, x_5$.
Let $\langle\cdot,\cdot\rangle_{-}$ be the bilinear form on $\CC^3$ given by the formula:
\begin{equation} \label{eq-forma-h}
\langle(q_{6,,6}^{(1)}, q_{6,7}^{(1)},q_{7,7}^{(1)}),(q_{6,,6}^{(2)}, q_{6,7}^{(2)},q_{7,7}^{(2)})\rangle_{-} :=
1/2 \cdot ( q_{6,,6}^{(1)} \cdot q_{7,7}^{(2)} + q_{7,7}^{(1)} \cdot q_{6,,6}^{(2)} - 2 q_{6,7}^{(1)} q_{6,7}^{(2)}) \, 
\end{equation}
and let  ${\mathfrak H} := [{\mathfrak h}_{i,j}]_{i,j=1,2,3}$, where  
$
{\mathfrak h}_{i,j} := \langle(q_{6,,6}^{(i)}, q_{6,7}^{(i)},q_{7,7}^{(i)}),(q_{6,,6}^{(j)}, q_{6,7}^{(j)},q_{7,7}^{(j)})\rangle_{-} \, .
$
By direct computation we have
\begin{equation} \label{eq-taylorzH}
\det(\mq_0 + \sum_{k=1}^{3} \mu_k \cdot \mq_k ) = 
     ((\mu_1,  \mu_2,  \mu_3) \, \, {\mathfrak H} \, \,  (\mu_1,  \mu_2,  \mu_3)^T) +
 (\mbox{terms of degree} \geq 3) \, .    
\end{equation}

\noindent
{\sl a)} Observe that, by \eqref{eq-taylorzH}, the quadric $Q_0$ is a node of $S_8$ iff $\rank({\mathfrak H}) = 3$.

\noindent
($\Rightarrow$): Suppose that 
there exists a quadric $Q \neq Q_0$, $Q \in \WWW$ such that $\sing(Q_0) \subset Q$.
If $Q$ is given by the matrix  $[q_{i,j}]_{i,j=0,\ldots,7}$, then $q_{6,6}$, $q_{6,7}$, $q_{7,7}$ vanish, which yields that
$\rank({\mathfrak H}) < 3$.

\noindent
($\Leftarrow$): If $\rank({\mathfrak H}) < 3$, then we can find a matrix 
$\mq = [q_{i,j}]_{i,j=0,\ldots,7}$ such that $\mq \in \mbox{span}(\{\mq_1, \mq_2, \mq_3\})$ and 
the entries 
$q_{6,6}$, $q_{6,7}$, $q_{7,7}$ vanish.
The latter means that the quadric $Q$ given by $\mq$ contains the line $\sing(Q_0)$. 
We have $Q \neq Q_0$ because $Q_0 \notin \mbox{span}(\{Q_1, Q_2, Q_3\})$.

\noindent
{\sl b)} By part a)  we can assume that $\sing(Q_0) \subset Q_1$, which implies that 
 the entries 
$q_{6,6}^{(1)}$, $q_{6,7}^{(1)}$, $q_{7,7}^{(1)}$ of the matrix $\mq_1$ vanish.
Moreover, by \eqref{eq-taylorzH}, the quadric  $Q_0$ is an A$_m$ singularity, where $m \geq 2$, iff $\rank({\mathfrak H}) = 2$
(see e.g. \cite[Prop.~8.14]{malydimca}).

\noindent
($\Rightarrow$):
Suppose that $P \in \sing(Q_0) \cap \Pl$. Then $P \in \sing(\XX)$ and there exists
a quadric in the pencil $\mbox{span}(\{Q_2, Q_3\})$ that meets the line $\sing(Q_0)$ 
only in the point $P$. In particular we can assume that $Q_2 \cap \sing(Q_0) = \{ P \}$
and $P := (0:\ldots:0:1)$. The latter  yields 
$$
q_{6,,6}^{(2)}=1 \mbox{ and  } q_{6,,7}^{(2)} = q_{7,,7}^{(2)} = 0.
$$
Furthermore, since $P \in Q_3$ we have $q_{7,7}^{(3)} = 0$. Then  
$$
((\mu_1,  \mu_2,  \mu_3) \, \, {\mathfrak H} \, \,  (\mu_1,  \mu_2,  \mu_3)^T)=  - (q_{6,7}^{(3)})^2 \cdot \mu_3^2 \, , 
$$
which implies that $Q_0$ is not an A$_m$ singularity of the octic surface $S_8$.

\noindent
($\Leftarrow$):  
By Lemma~\ref{lem-ranksix}.a we have $\rank({\mathfrak H}) \leq 2$, so it suffices to show
that $\rank({\mathfrak H}) \notin \{0, 1\}$.  

Assume that $\rank({\mathfrak H}) = 1$. This means that 
\begin{equation} \label{eq-matrix-hadwadwa}
\operatorname{rank}
\left[
\begin{array}[c]{cc}
{\mathfrak h}_{2,2}   &   {\mathfrak h}_{2,3} \\
{\mathfrak h}_{3,2}   &   {\mathfrak h}_{3,3} \\ 
 \end{array}
\right]   = 1                \, .
\end{equation} 
Suppose that the vectors $(q_{6,,6}^{(2)}, q_{6,7}^{(2)},q_{7,7}^{(2)})$,
$(q_{6,,6}^{(3)}, q_{6,7}^{(3)},q_{7,7}^{(3)})$ are linearly independent.
By replacing $\mq_2$ with an appropriate linear combination of $\mq_2$, $\mq_3$ 
we can assume that the first column of the matrix \eqref{eq-matrix-hadwadwa} vanishes.
Then, from \eqref{eq-forma-h} and ${\mathfrak h}_{2,2} = 0$ 
we obtain the equality $\operatorname{rank}([q_{i,j}^{(2)}]_{i,j=6,7}) = 1$.
Performing an appropriate change of coordinates on the line $\sing(Q_0)$  
we arrive at 
\begin{equation}  \label{eq-normalizacjaQ2} 
q_{6,,6}^{(2)}=1 \mbox{ and } q_{6,,7}^{(2)} = q_{7,,7}^{(2)} = 0. 
\end{equation}
Then, the equality ${\mathfrak h}_{3,2} = 0$ 
yields $q_{7,7}^{(3)} = 0$. The latter implies that 
$$
(0:\ldots:0:1) \in \sing(Q_0) \cap  \sing(\XX) \, .
$$
Finally, the assumption    
[\exactlytensingularities] gives  $P \in \sing(Q_0) \cap \Pl$. \\
Suppose that \eqref{eq-matrix-hadwadwa} holds and the vectors $(q_{6,,6}^{(2)}, q_{6,7}^{(2)},q_{7,7}^{(2)})$,
$(q_{6,6}^{(3)}, q_{6,7}^{(3)},q_{7,7}^{(3)})$ are linearly dependent. Then, 
we can assume that the entries 
$q_{6,6}^{(2)}$, $q_{6,7}^{(2)}$, $q_{7,7}^{(2)}$ vanish, which implies  $\sing(Q_0) \subset Q_2$.
Finally, since
the line $\sing(Q_0)$ is contained in the quadrics $Q_1$, $Q_2$, each
point in the intersection $Q_3 \cap \sing(Q_0)$ is a singularity of $\XX$.
By [\exactlytensingularities] we have $\sing(Q_0) \cap \Pl \neq \emptyset$. 

In the same way  the equality $\rank({\mathfrak H}) = 0$ implies  $\sing(Q_0) \cap \Pl \neq \emptyset$. 
We omit the details.

\noindent
{\sl c)} By parts a) and b) we can assume that $\sing(Q_0) \subset Q_1$ and $\sing(Q_0) \cap \Pl \neq \emptyset$. 
Suppose that ${\mathfrak H} = 0$. From ${\mathfrak h}_{2,2} = 0$ we obtain  \eqref{eq-normalizacjaQ2}.
Then ${\mathfrak h}_{3,2} = 0$ yields $q_{7,7}^{(3)} = 0$, and by ${\mathfrak h}_{3,3} = 0$
the entry  $q_{6,6}^{(3)}$ vanishes. By replacing $\mq_3$ with $(\mq_3 - \mq_2)$ we obtain
the inclusion $\sing(Q_0) \subset Q_3$.

To complete the proof we assume, as in Section~\ref{sect-singularities} (see the proof of Remark~\ref{rem-noquadricsofrankfour}), that 
 the plane $\Pl$ (resp. the line $\sing(Q_0)$) is given by vanishing of the coordinates $x_0$,$\ldots$, $x_4$
(resp. $x_0$,$\ldots$, $x_3$ and $x_6, x_7$). Observe that  the point
$P = (0:\ldots:0:1:0:0) \in \sing(Q_0) \cap \Pl$ is a singularity of $\XX$. Therefore,  
Lemma~\ref{lem-quadrics}.b yields that 
the quadrics $Q_1$, $Q_2$, $Q_3$ are smooth in $P$. By direct computation, there
exist $v_1, \ldots, v_4 \in \CC$ such that 
the intersection of the tangent spaces  $\mbox{T}_{P}Q_1$,  $\mbox{T}_{P}Q_2$,  $\mbox{T}_{P}Q_3$ 
is parametrized by the map
$$
(\lambda_1, \lambda_2, \lambda_3, \lambda_4) \mapsto 
(\lambda_1 v_1, \lambda_1 v_2, \lambda_1 v_3, \lambda_1 v_4, \lambda_2, \lambda_3, \lambda_4) \, . 
$$ 
Substituting the above parametrization to (dehomogenized) $Q_0$ we see that the tangent cone
$\mbox{C}_{P}\XX$ is contained in union of two $3$-planes, so the point $P \in \XX$ is not a node.
Contradiction (see Lemma~\ref{lem-A1nodes}).
\end{proof}
\begin{rem} \label{remark-A3}
Direct computation with help of \cite{GPS01}, gives examples 
of webs of quadrics such that the assumptions 
[\exactlytensingularities],
[\zanurzeniebordygi],
[\exactlyfortysixsingularities]
[\isolatedsingularities] are fulfilled and the quadric $Q_0$ satisfies the conditions
of Lemma~\ref{lem-ranksix}.b. One can check that for generic choice of the quadrics  one obtains an A$_3$ singularity of the discriminant 
octic $S_8$.
\end{rem}
To complete the description of singularities of $S_8$ we prove the following lemma.
\begin{lemm}  \label{rem-rank-five}
A quadric $Q_0 \in \WWW$ is a point of multiplicity  at least $3$ on $S_8$ iff $\rank(\mq_0) = 5$.
\end{lemm}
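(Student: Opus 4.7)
The plan is to prove the two implications separately, using a block-matrix computation for the forward direction and a process of elimination of all other ranks for the converse.

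For the direction $\rank(\mq_0)=5 \Rightarrow$ multiplicity of $Q_0$ on $S_8$ is at least $3$, I would choose coordinates on $\PP_7$ so that $\mq_0 = {\mathbb I}_5$, decompose each $\mq_k$ (for $k=1,2,3$) in block form with a $5\times 5$ upper-left block and a $3\times 3$ lower-right block $C_k$, and apply the Schur-complement formula to expand $\det(\mq_0 + \sum_{k=1}^{3}\mu_k\mq_k)$ about $\mu=0$. The upper factor contributes $1+O(\mu)$ while the lower factor contributes the homogeneous cubic $\det(\sum_{k=1}^{3}\mu_k C_k)$ plus terms of order $\ge 4$, so the multiplicity of $S_8$ at $Q_0$ is at least $3$.

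For the converse I would rule out every rank other than $5$. The rank cannot equal $8$ since $Q_0\in S_8$. By Lemma~\ref{lem-rankseven} a rank-$7$ quadric is either a smooth point or a node of $S_8$, so its multiplicity is at most $2$; by Lemma~\ref{lem-ranksix}.c a rank-$6$ quadric is a double point, so its multiplicity is exactly $2$. Rank $4$ is excluded by Remark~\ref{rem-noquadricsofrankfour}.

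The only remaining case is $\rank(\mq_0)\le 3$, which requires an independent dimension argument. In that case $\sing(Q_0)$ is a linear subspace of $\PP_7$ of projective dimension at least $4$, and successive intersection with $Q_1,Q_2,Q_3$ cannot drop the dimension below $1$. Since $dQ_0$ vanishes identically on $\sing(Q_0)$, the differentials $dQ_0,\ldots,dQ_3$ are linearly dependent everywhere on this positive-dimensional intersection; by the criterion of Lemma~\ref{lem-singularitiesofx16} the intersection is then contained in $\sing(X_{16})$, contradicting assumption [\exactlytensingularities]. The main technical content of the proof is the block-matrix expansion of the determinant in the first step; the converse is essentially a reassembly of results established earlier in the section.
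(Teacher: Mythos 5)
Your proof is correct and follows essentially the same route as the paper: the paper's converse is exactly the computation of $\det(\mq_0+\sum_k\mu_k\mq_k)$ after normalizing $\mq_0={\mathbb I}_5$, and its forward direction likewise eliminates ranks $7$ and $6$ via Lemmata~\ref{lem-rankseven} and~\ref{lem-ranksix} and rank $4$ via Remark~\ref{rem-noquadricsofrankfour}. Your explicit dimension argument ruling out $\rank(\mq_0)\le 3$ (a positive-dimensional piece of $\sing(\XX)$ contradicting [\exactlytensingularities]) is a point the paper leaves implicit, and it is a worthwhile addition.
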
 
\begin{proof}
\noindent
($\Rightarrow$): Lemmata~\ref{lem-rankseven},~\ref{lem-ranksix}
imply  that $\rank(Q) \leq 5$. Remark~\ref{rem-noquadricsofrankfour} completes the proof. 

\noindent
($\Leftarrow$): Assume that $\mq_0 = {\mathbb I}_5$
and compute the determinant $\det(\mq_0 + \sum_{k=1}^{3} \mu_k \cdot \mq_k )$.
\end{proof}
The example below shows that the bound of  Remark~\ref{rem-noquadricsofrankfour} is sharp, and
the discriminant octic $S_8$ can have triple points. 
\begin{examp} \label{example-rankfive}
We define the following matrices:
\[
\mq_0 := \left[
\begin{array}{*{8}{r}}
\phantom{-}0 & \phantom{-}0 & 0 & 0 & 0 & \phantom{-}0 & 0 & 0 \\
0 & 0 & 0 & 0 & 0 & 0 & 0 & 0 \\
0 & 0 & 6 & 0 & -4 & 0 & -2 & 1 \\
0 & 0 & 0 & 4 & 3 & 0 & 2 & -4 \\
0 & 0 & -4 & 3 & 8 & 0 & -5 & 0 \\
0 & 0 & 0 & 0 & 0 & 0 & 0 & 0 \\
0 & 0 & -2 & 2 & -5 & 0 & 0 & 0 \\
0 & 0 & 1 & -4 & 0 & 0 & 0 & 0
\end{array}
\right]
\;  \mq_1 := \left[
\begin{array}{*{8}{r}}
-4 & -4 & 2 & -1 & 0 & -1 & -1 & -3 \\
-4 & 2 & 0 & 0 & 4 & -2 & 0 & -1 \\
2 & 0 & 0 & -1 & 2 & -2 & 4 & 2 \\
-1 & 0 & -1 & 2 & 3 & -1 & 3 & -2 \\
0 & 4 & 2 & 3 & -4 & -2 & 0 & 1 \\
-1 & -2 & -2 & -1 & -2 & 0 & 0 & 0 \\
-1 & 0 & 4 & 3 & 0 & 0 & 0 & 0 \\
-3 & -1 & 2 & -2 & 1 & 0 & 0 & 0
\end{array}
\right]
\]
\[
\mq_{2} := \left[
\begin{array}{*{8}{r}}
4 & -3 & -3 & -2 & 1 & -3 & -3 & -1 \\
-3 & -2 & -3 & -4 & 1 & 4 & 3 & 1 \\
-3 & -3 & 4 & 1 & 0 & 1 & 1 & 1 \\
-2 & -4 & 1 & 2 & -2 & 0 & 1 & 4 \\
1 & 1 & 0 & -2 & 4 & -1 & 0 & -1 \\
-3 & 4 & 1 & 0 & -1 & 0 & 0 & 0 \\
-3 & 3 & 1 & 1 & 0 & 0 & 0 & 0 \\
-1 & 1 & 1 & 4 & -1 & 0 & 0 & 0
\end{array}
\right] \;
     \mq_{3}:= \left[
\begin{array}{*{8}{r}}
4 & -1 & 2 & 2 & -2 & -1 & -2 & 0 \\
-1 & 2 & 2 & -3 & -1 & -4 & -2 & 4 \\
2 & 2 & -2 & -1 & 1 & 3 & 2 & -1 \\
2 & -3 & -1 & -2 & 0 & 1 & 3 & -2 \\
-2 & -1 & 1 & 0 & -4 & 4 & 1 & -1 \\
-1 & -4 & 3 & 1 & 4 & 0 & 0 & 0 \\
-2 & -2 & 2 & 3 & 1 & 0 & 0 & 0 \\
0 & 4 & -1 & -2 & -1 & 0 & 0 & 0
\end{array}
\right]
\]
By direct computation with help of \cite{GPS01}, the intersection  in $\PP_7$ of
the quadrics defined by the above matrices satisfies the assumptions 
[\exactlytensingularities], $\ldots$,  [\isolatedsingularities]. As one can easily see, we have
$\rank(\mq_0) = 5$.
\end{examp}

We put 
$\pi_2 \, : \, X_8 \rightarrow \WWW$ to denote the double cover of the web  $\WWW$  branched along the discriminant surface $S_8$. 
We have the following theorem (compare \cite[Thm~3.1]{michalek}).

\begin{theo} \label{thm-main}
Assume that [\exactlytensingularities], $\ldots$,  [\isolatedsingularities] hold.
 \begin{itemize}
\item [a)] There exists a (small) resolution $\hat{\phi}:\ttx \rightarrow X_8$ 
            of singularities of the double octic $X_8$ such that the following diagram commutes:
\begin{equation*} \label{eq-diagram}
\xymatrix{\ttx \ar[rr]^{\phi}\ar[dr]^{\hat{\phi}} & & \PP_3 &  \\
& X_8 \ar[ur]^{\pi_2} & }
\end{equation*}
\item [b)] Let $\pi$ be the map induced by the projection from the plane $\Pl$ (see \eqref{eq-podniesionaprojekcja})
and let $\sigma$ (resp. $\psi$) be the blow up defined by \eqref{eq-rozdmuchanieplaszczyzny} 
(resp. \eqref{eq-rozdmuchaniebordygi}). 
Then the composition
$$
\XX \,  \stackrel{\sigma^{-1}}{\plra}  \,  \tx  \, \stackrel{\pi}{\lra} \, X_5  \,  \stackrel{\psi^{-1}}{\plra}  \, 
\ttx  \stackrel{\hat{\phi}}{\lra} \, X_8
$$
is a birational map between the base locus of the web $\WWW$ and its double cover branched along the discriminant surface $S_8$. 
In particular, the base locus $\XX$ and the discriminant double octic $X_8$ are birational to the quintic $3$-fold 
$X_5$ (see \eqref{eq-quintic}) that contains Bordiga sextic.
\end{itemize}
\end{theo}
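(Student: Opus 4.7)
My strategy for (a) is to apply Stein factorization to $\phi$ and then identify its intermediate target with $X_8$. Since $\ttx$ is projective and $\PP_3$ is separated, $\phi$ is proper; by Corollary \ref{cor-twotoone} it has generic degree two. Stein factorization therefore produces a factorization $\phi=\pi_2'\circ\hat\phi$ with $\hat\phi\colon\ttx\to Y$ having connected fibers onto a normal variety $Y$ and $\pi_2'\colon Y\to\PP_3$ finite of degree two.

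The key step is the identification of $Y$ with $X_8$. Combining the four cases of Theorem \ref{thm-fibers} with Lemmas \ref{lem-rankseven}, \ref{lem-ranksix} and \ref{rem-rank-five}, the fiber $\phi^{-1}(y)$ consists of two distinct reduced points precisely when $\rank(\mq(y))=8$, i.e.\ $y\notin S_8$, whereas over every $y\in S_8$ the fiber is either a single point (type (c), where $\rank(\mq(y))=7$) or one-dimensional (types (a) and (b)). Hence the set-theoretic branch locus of $\pi_2'$ coincides with $S_8$. Under \textbf{[\isolatedsingularities]} the octic $S_8$ is reduced with only isolated singularities, so the normal double cover of $\PP_3$ branched along $S_8$ is unique up to isomorphism; this identifies $Y$ with $X_8$ and yields the required commutative diagram. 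That $\hat\phi$ is then a \emph{small} resolution follows because it is birational onto the normal threefold $X_8$ and, by Theorem \ref{thm-fibers}, its positive-dimensional fibers lie over the (isolated) singular points of $X_8$ and consist of finitely many rational curves.

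Part (b) I would deduce by checking that every arrow in the composition is birational: $\sigma^{-1}$ inverts the blow-up \eqref{eq-rozdmuchanieplaszczyzny}; $\pi$ is birational by Lemma \ref{lem-X5isnormal}(a), in fact a small resolution of $X_5$ by the discussion following Lemma \ref{lem-gluedpoints}; $\psi^{-1}$ inverts the second small resolution \eqref{eq-rozdmuchaniebordygi}; and $\hat\phi$ is birational by part (a). Composing yields the asserted birational map $\XX\plra X_8$, with the quintic $X_5$ containing the Bordiga sextic $\mb$ as the natural intermediate model.

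The main obstacle I expect is not the set-theoretic identification of the branch divisor, which drops out of Theorem \ref{thm-fibers}, but rather showing that the target of the Stein factorization is \emph{the} double cover $X_8$ with the expected scheme structure (so that $\pi_2'$ really is the cover $\pi_2$). I would handle this by localising at a generic point of $S_8$, where $Q(y)$ has corank one, and verifying directly that the local model of $\pi_2'$ agrees with the square-root cover $w^2=\det(\mq(y))$; assumption \textbf{[\isolatedsingularities]} will then ensure that no spurious ramification is hidden at the singular points of $S_8$.
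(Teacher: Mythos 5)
Your proposal is correct and follows essentially the same route as the paper: Stein factorization of $\phi$, identification of the degree-two finite part with $\pi_2$ via the fiber description of Theorem~\ref{thm-fibers} together with [\isolatedsingularities], smallness of $\hat\phi$ from the location of the positive-dimensional fibers, and part (b) as a composition of the birational maps already established (Lemma~\ref{lem-X5isnormal}.a for $\pi$). Your extra care about matching the scheme structure of the Stein factor with the double cover at a generic point of $S_8$ is a detail the paper leaves implicit, but it does not change the argument.
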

\begin{proof}
\noindent
a) Consider Stein factorization of the map $\phi: \ttx \rightarrow \PP_3$: 
$$
\phi = \hat{\phi} \circ \phi'
$$
where $\phi'$ is  finite and $\hat{\phi}$ has connected fibers. By Cor.~\ref{cor-twotoone} the map $\phi'$ is a 
(ramified) double cover of $\PP_3$. Thm~\ref{thm-fibers} and the assumption [\isolatedsingularities] imply the equality 
$\phi' = \pi_2$. Then the map $\hat{\phi}: \ttx \lra X_8$ is birational (see e.g. \cite[p.~11]{debarre}).
Thm~\ref{thm-fibers} implies that  
the set of $1$-dimensional fibers of the latter map coincides with $\hat{\phi}^{-1}(\sing(X_8))$. This completes the proof.

\noindent
b) We have just shown that the map $\hat{\phi}$ is birational.  
The claim follows from Lemma~\ref{lem-X5isnormal}.a.
\end{proof}

In the case of the double sextic defined by a net of quadrics that contain a (fixed) line
the discriminant curve has only nodes as singularities (see \cite[Thm~3.3]{cynkrams}).

In the corollary below we discuss the singularities
of the discriminant surface $S_8$.

\begin{cor} \label{cor-singularities}
Assume that [\exactlytensingularities], $\ldots$,  [\isolatedsingularities] hold.
\begin{itemize}
\item [a)] 
The  equality $\sum_{P \in \sing(X_8)} (\mu(P, X_8) + 1)  = 188 $    holds,
where $\mu(P, X_8)$ stands for the Milnor number of $X_8$ in the point $P$.
 \item [b)] A quadric  $Q_0 \in \WWW$ is a singularity of $S_8$ of the type given in the first column
of the table below  iff it satisfies the conditions listed in the other column

\vspace*{3ex}
\hspace*{-10ex}
\begin{tabular}{|c|c|c|c|} 
\hline
Type of singularity    & \multicolumn{3}{c|}{Conditions} \\ \cline{2-4}
                              & $\operatorname{rank}(\mq_0)$ &                                      &                             \\
                                                                                                        
\hline 
smooth point                  &           $7$              &   $\sing(Q_0) \cap \XX = \emptyset$   &                             \\  
\hline  
A$_1$                         &           $7$              & $\sing(Q_0) \cap \XX \neq \emptyset$  &                             \\
\cline{2-4}
                              &           $6$              &                                       & 
 $\{Q \in \WWW \, : \, Q \neq Q_0, \operatorname{sing}(Q_0) \subset Q \} = \emptyset$    \\
\hline
A$_m$, $m \geq 3$, $m$ odd    &           $6$              &   $\sing(Q_0) \cap \Pl = \emptyset$   &   $\{Q \in \WWW \, : \, Q \neq Q_0, \operatorname{sing}(Q_0) \subset Q \} \neq \emptyset$          \\     
\hline
double point  of corank $2$   &           $6$     &        $\sing(Q_0) \cap \Pl \neq \emptyset$  &   
$\{Q \in \WWW \, : \, Q \neq Q_0, \operatorname{sing}(Q_0) \subset Q \} \neq \emptyset$          \\
\hline
$k$-fold point,   $k \geq 3$     &           $5$     &    &                                                                             \\ 
\hline
\end{tabular}
\end{itemize}
\end{cor}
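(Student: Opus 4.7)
The plan is to handle the two parts separately: part (b) is a bookkeeping assembly of the lemmas of this section, and part (a) is an Euler-characteristic computation.

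For part (b), Remark~\ref{rem-noquadricsofrankfour} excludes $\rank(\mq_0)\leq 4$, leaving only the three cases $\rank(\mq_0)\in\{5,6,7\}$ for a point of $S_8$. The two rank-$7$ rows of the table are precisely Lemma~\ref{lem-rankseven}(a)--(b); the three rank-$6$ rows come from Lemma~\ref{lem-ranksix}, where part~(a) handles the node case and part~(b) together with part~(c) splits the remaining rank-$6$ quadrics into $A_m$ singularities (when $\sing(Q_0)\cap\Pl=\emptyset$) and corank-$2$ double points (when $\sing(Q_0)\cap\Pl\neq\emptyset$); the bottom row is Lemma~\ref{rem-rank-five}. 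The parity ``$m$ odd'' in the $A_m$ row I would justify by invoking the local double-cover normal form $z^2=x^2+y^{m+1}$ at a corank-$1$ singularity of the branch surface (so that the companion singularity of $X_8$ is a compound $A_m$ that admits a small resolution only for $m$ odd), together with the example in Remark~\ref{remark-A3}.

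For part (a), the strategy is to compute $e(X_8)$ in two different ways. On the one hand, since $X_8$ is an octic double cover with only isolated hypersurface singularities (assumption [\isolatedsingularities]) and a smooth Calabi--Yau double octic has Euler number $-296$, the standard smoothing formula (cf.~\cite[Cor.~5.4.4]{dimca}) yields
\[
e(X_8)=-296+\sum_{P\in\sing(X_8)}\mu(P,X_8).
\]
On the other hand, Theorem~\ref{thm-main}(a) provides the small resolution $\hat{\phi}:\ttx\lra X_8$; since $\ttx$ and $\tx$ differ by a flop of the $46$ smooth rational curves $L_j$ and $F_j$ (and flops preserve the Euler number) Lemma~\ref{lem-hn} gives $e(\ttx)=e(\tx)=-108$. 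Assuming each exceptional fibre of $\hat{\phi}$ is an irreducible $\PP_1$, so that $e(\ttx)=e(X_8)+\#\sing(X_8)$, one equates the two expressions for $e(X_8)$ and obtains
\[
\sum_{P\in\sing(X_8)}\bigl(\mu(P,X_8)+1\bigr)=-108+296=188.
\]

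The main obstacle is the claim that every exceptional fibre of $\hat{\phi}$ is an irreducible $\PP_1$. Theorem~\ref{thm-fibers} already shows that fibres of type~(b) of the Bordiga map $\phi$ are irreducible lines, contributing one $\PP_1$ over each rank-$6$ node. The delicate case is the ten type~(a) fibres above the points $y^{(i)}$, where the fibre of $\phi$ is a reducible planar quintic (a residual conic together with possibly several components of the cubic $\mb\cap\Phi^{-1}(y^{(i)})$ catalogued in Lemma~\ref{lem-nocontractedcurvesonbordiga}). Here I would argue that the Stein factorization $\phi=\hat{\phi}\circ\phi'$ separates these components into distinct connected fibres of $\hat{\phi}$, one for each singular point of $X_8$ sitting above $y^{(i)}$ (the rank-$5$, corank-$2$ rank-$6$, and certain rank-$7$ quadrics classified in part (b)); a direct incidence check of each catalogued component against the residual conic, combined with irreducibility of each cubic component listed in Lemma~\ref{lem-nocontractedcurvesonbordiga}, then shows that every such connected component is an irreducible $\PP_1$, completing the argument.
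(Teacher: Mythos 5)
Your proposal follows essentially the same route as the paper: part (a) is the identical double computation of $e(X_8)$ (Dimca's formula on one side, $e(\ttx)=-108$ together with the small resolution $\hat{\phi}$ of Theorem~\ref{thm-main}.a on the other), and part (b) is the same assembly of Lemmata~\ref{lem-rankseven}, \ref{lem-ranksix} and \ref{rem-rank-five}, with the parity ``$m$ odd'' deduced, exactly as in the paper, from the existence of the small resolution via Reid's pagoda criterion \cite[Cor.~1.16]{reid-pagoda}. The irreducibility of the exceptional fibres that you single out as the main obstacle is precisely the content the paper delegates to Theorem~\ref{thm-main}.a, so your extra discussion only makes explicit what the paper leaves implicit.
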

\begin{proof}

\noindent
a)  To compute the sum of Milnor numbers of
singularities of $X_8$ we compare topological Euler numbers of $\ttx$  and $X_8$. By the assumption 
[\exactlyfortysixsingularities] and Lemma~\ref{nodesofquintic} we have $e(\ttx) = - 108$.
On the other hand, by Chern class argument the  Euler number of a smooth octic in $\PP_3$ is $304$, so  
\cite[Cor.~5.4.4]{dimca} implies $e(X_8) = - 296 + \sum_{P \in \sing(X_8)} \mu(P, S_8)$. 
Observe that in our set-up the equality $\mu(P, S_8) = \mu(P, X_8)$ holds.
From Thm~\ref{thm-main}.a we get 
\begin{equation}
- 108 + \#(\sing(X_8) = - 296 + \sum_{P \in \sing(X_8)} \mu(P, X_8) \, .
\end{equation}
that yields the claim.

\noindent
b) By Thm~\ref{thm-main}.b and \cite[Cor.~1.16]{reid-pagoda}  the
octic  $S_8$ has no A$_m$ points with $m$ even. 
The claim follows now directly from Lemmata~\ref{lem-rankseven},~\ref{lem-ranksix}, and Lemma~\ref{rem-rank-five}. 
\end{proof}
\begin{rem} 
Under the assumptions  [\exactlytensingularities], $\ldots$,  [\isolatedsingularities] the following 
inequality holds
$$
\# \{ P \in \sing(S_8) \, : \, P \mbox{ is not an A}_m \text{ point, where } m \geq 1 \}   \leq 10.
$$ 
\end{rem}
\begin{proof} 
By Lemmata~\ref{lem-rankseven},~\ref{lem-ranksix}
each double point $Q_0 \in \sing(S_8)$ that is not an A$_m$ singularity is a singular quadric and 
its singular locus meets the plane $\Pl$.
The same holds for rank-$5$ quadrics in the web $\WWW$ (see Thm~\ref{thm-fibers}). 
Therefore, the inequality results from Remark~\ref{rem-atmostten}.
\end{proof}

\noindent
{\sl Final remarks:} a) According to  \cite[Thm~4.1]{laufer} the normal bundle a 
smooth rational curve that is contracted on a $3$-fold  is one of the following:
 $({\mathcal O}_{\PP_1}(-1) \oplus {\mathcal O}_{\PP_1}(-1))$,  $({\mathcal O}_{\PP_1}(-2) \oplus {\mathcal O}_{\PP_1})$, 
$({\mathcal O}_{\PP_1}(-3) \oplus {\mathcal O}_{\PP_1}(1))$. Remark~\ref{remark-A3} and Ex.~\ref{example-rankfive}
show that all such bundles can come up in our set-up. For the conditions imposed on the equation
of a (smooth) $3$-fold quintic in $\PP_4$ by the normal bundle of a contracted curve 
the reader should consult \cite[App.~A, B]{katz}.   

\noindent
b) Assume that all singularities of $S_8$ are A-D-E points. By \cite[Thm~1.1]{batyrev}
the Hodge diamond  of any  small K\"ahler resolution of the double octic $X_8$ 
coincides with the one given   in Lemma~\ref{lem-hn}. In view of \cite[Cor.~5.1]{rams-hab}
and [ibid.,~Prop.~6.1], the latter implies that the assumptions 
 [\exactlytensingularities], $\ldots$,  [\isolatedsingularities]  
determine position of singularities of $S_8$ with respect to 
sections of ${\mathcal O}_{\PP_3}(8)$  
(compare \cite[Prop.~2.13]{michalek}). 

\noindent
c) In Thm~\ref{thm-fibers} we describe components of  $\Phi^{-1}(y)$ when
$\rank(\macB(y)) = 2$.
Since all singularities of $X_8$ admit a small resolution, \cite[Thm~5.5]{Morrison}
can be applied to obtain a more precise description of such fibers. 
We omit details because of lack of space.

\vspace*{2ex}
\noindent
{\it Acknowledgement.} Part of this paper was written during the second author's stay at the Department of Mathematics of N\"urnberg-Erlangen University. 
The second author would like to thank Prof.~W.~P.~Barth
for his help and encouragement.

\parindent=0cm
\end{document}